\numberwithin{equation}{section}
\newtheorem{thm}[equation]{Theorem}
\newtheorem{prop}[equation]{Proposition}
\newtheorem{cor}[equation]{Corollary}
\newtheorem{lem}[equation]{Lemma}
\theoremstyle{definition}
\newtheorem{exmp}[equation]{Example}
\newtheorem{rem}[equation]{Remark}
\renewcommand{\dim}{\operatorname{\mathsf{dim}}}
\renewcommand{\deg}{\operatorname{\mathsf{deg}}}
\renewcommand{\bmod}{\operatorname{\,\mathsf{mod}}\,}
\newcommand\ind{\operatorname{\mathsf{ind}}}
\renewcommand\exp{\operatorname{\mathsf{exp}}}
\newcommand\op{\operatorname{\mathsf{op}}}
\newcommand\End{\operatorname{\mathsf{End}}}
\newcommand\Br{\operatorname{\mathsf{Br}}}
\newcommand\id{\operatorname{\mathsf{id}}}
\newcommand\Int{\operatorname{\mathsf{Int}}}
\newcommand\Ker{\operatorname{\mathsf{Ker}}}
\newcommand\Nrd{\operatorname{\mathsf{Nrd}}}
\newcommand\corr{\operatorname{\mathsf{cor}}}
\newcommand\hh{\operatorname{\mathbb{H}}}
\newcommand{\car}{\mathsf{char}}
\newcommand{\can}{\operatorname{\mathsf{can}}}
\newcommand{\vf}{\varphi}
\newcommand{\mg}[1]{{#1}^{\times}}
\newcommand{\sq}[1]{{#1}^{\times 2}}
\newcommand{\s}{\sigma}
\newcommand{\nat}{\mathbb{N}}
\newcommand{\la}{\langle}
\newcommand{\ra}{\rangle}
\renewcommand{\leq}{\leqslant}
\renewcommand{\geq}{\geqslant}
\newcommand\Sym{\operatorname{\mathsf{Sym}}}
\newcommand\Skew{\operatorname{\mathsf{Skew}}}
\newcommand\Ad{\operatorname{\mathsf{Ad}}}
\newcommand\ad{\operatorname{\mathsf{ad}}}
\newcommand\sw{\operatorname{\mathsf{sw}}}
\newcommand\rk{\operatorname{\mathsf{rk}}}
\renewcommand{\setminus}{\smallsetminus}
\newcommand{\I}{\mathsf{I}}
\begin{document}
\title{Orthogonal involutions over fields with $\I^3=0$}

\date{1 May, 2025}

\author{Karim Johannes Becher}
\author{Fatma Kader B\.{i}ng\"{o}l}

\address{University of Antwerp, Department of Mathematics, Antwerp, Belgium.}
\email{karimjohannes.becher@uantwerpen.be}

\address{Galatasaray University, Department of Mathematics, Istanbul, Turkey.}
\address{University of Verona, Department of Informatics, Verona, Italy.}
\address{Scuola Normale Superiore, Pisa, Italy.}
\email{fatmakader.bingol@gmail.com}

\begin{abstract}
    We provide upper bounds on the $u$-invariant for skew-hermitian forms over a quaternion algebra with its canonical involution in terms of the $u$-invariant of the base field $F$ of characteristic different from $2$ when $\I^3 F=0$.
    
\medskip\noindent
{\sc{Keywords:}}  quaternion algebra, skew-hermitian form, isotropy, cohomological invariant, classification, $u$-invariant

\medskip\noindent
{\sc Classification} (MSC 2020): 16K20, 16W10, 11E39
\end{abstract}

\maketitle

\section{Introduction}
Let $F$ be a field of characteristic different from $2$. We denote by $\I^3 F$ the third power of the fundamental ideal in the Witt ring of $F$.
R.~Elman and T.Y.~Lam \cite{EL73c} proved that non-degenerate quadratic forms over $F$ are classified by dimension, discriminant and the Clifford invariant precisely when $\I^3 F=0$ (see \Cref{EL:I3=0-classification}).   
Under the same condition that $\I^3F=0$,
E.~Bayer-Fluckiger and R.~Parimala \cite{BFP95} obtained analogous classification results for hermitian forms over $F$-algebras with orthogonal involution. 
From this D.~Lewis and J.-P.~Tignol \cite{LT99} derived classification results for algebras with involution over such fields.
They showed that discriminant and Clifford algebra suffice to classify $F$-algebras with orthogonal involution up to conjugation when $\I^3 F=0$.

Let us say that the field $F$ \emph{satisfies $\I^3=0$} if $F$ has characteristic different from $2$ and  $\I^3F=0$. 
Some of the most classical types of fields where quadratic form theory was first developed have the property $\I^3=0$: $p$-adic fields, non-real number fields and function fields of complex surfaces.
Those fields present a rich theory of quadratic forms and linear algebraic groups.
Often a result was proven first for, say, $p$-adic fields and non-real number fields and later extended to fields with $\I^3=0$.

A remarkable feature concerning quadratic forms over fields which is not controlled by the property $\I^3=0$ is given by the $u$-invariant.
Recall that a quadratic form over $F$ is \emph{isotropic} if it has a non-trivial solution, otherwise it is \emph{anisotropic}.
The \emph{$u$-invariant of $F$}, denoted by $u(F)$, is the supremum in $\nat\cup\{\infty\}$ 
on the dimensions of anisotropic quadratic forms over~$F$.

The classical examples of fields with $\I^3=0$ all have $u$-invariant $1,2$ or $4$.
A.~Merkurjev \cite{Mer91} constructed examples of fields that show that any even positive integer is the $u$-invariant of some field satisfying $\I^3=0$. The value $\infty$ can be realised as well.

In this article, we want to add a new aspect to the study of fields with $\I^3 =0$ by considering the possible degrees of central simple algebras over $F$ carrying an anisotropic orthogonal involution. 
General upper bounds on the degrees in terms of the $u$-invariant of $F$ have been obtained, 
however with no specific attention to fields with $\I^3 =0$. 
In our study we make in particular use of the discriminant and the Clifford algebra of an orthogonal involution on a central simple algebra over $F$. They can be interpreted as cohomological invariants, and in low degrees they can be used to determine whether an involution is isotropic.

We view the involutions on a central simple algebra as the adjoint involutions of hermitian or skew-hermitian forms. 
In \Cref{section:hermit&inv}, we revisit this connection. In \Cref{section:cohomological invariants-inv}, we discuss two classical invariants of orthogonal involutions, namely the discriminant and the Clifford invariant. We further make some remarks related to those invariants for the case of a field with $\I^3 =0$, including a new elementary proof, in this special case, of Merkurjev's theorem  that the $2$-torsion part of the Brauer group is generated by classes of quaternion algebras.
In \Cref{section:orth-hermit-u-inv-quaternion}, we obtain new upper bounds on the orthogonal $u$-invariant of a central simple algebra when $\I^3 =0$.
In \Cref{bound-anis-alg-ind2-orth-I^3=0} we show that, given a field $F$ satisfying $\I^3=0$ 
and a central simple algebra $A$ over $F$ of index $2$ and of degree exceeding $4\lceil \frac{u(F)}4\rceil+6$, every orthogonal involution on $A$ is isotropic. 
In \Cref{orthogonal-hermit-u-quater-I^3=0} we reformulate these results in terms of the orthogonal $u$-invariant of an algebra with involution introduced in \cite{Mah05}.
When $u(F)\geq6$, this bound is better than the bounds that can be obtained  from the results in \cite{Mah05}, \cite{PS13}, \cite{Wu18}; see \Cref{remark:improv-u-inv6-hermit-quat}. 

In \Cref{section:anisotropic-inv-deg8-I^3=0} and \Cref{section:anisotropic-inv-deg10-I^3=0} we  take a closer look on the case where $u(F)=4$.
In this situation, a central simple $F$-algebra admits an orthogonal involution if and only if it is Brauer equivalent to an $F$-quaternion algebra. 
In \Cref{T:exists-quadext-u6}, we construct from a quadratic field extension $K/F$ with $u(K)=6$ an anisotropic $F$-algebra with orthogonal involution of index $2$ and degree $8$ of discriminant given by $K/F$.
In \Cref{deg10-ort-iso-equiv-cond}, we derive a characterisation of the existence of a quadratic field extension $K/F$ with $u(K)=6$ in terms of the presence of an anisotropic $F$-algebra with orthogonal involution of degree $10$.
This allows us to conclude  that our bounds from \Cref{bound-anis-alg-ind2-orth-I^3=0} and \Cref{orthogonal-hermit-u-quater-I^3=0} are optimal when $u(F)=4$; see \Cref{optimality-u^+(Q)-for-u(F)=4}.
We achieve this by constructing algebras with an anisotropic orthogonal involution in degrees $8$ and $10$ over such fields.
Furthermore, the characterisations obtained in \Cref{T:exists-quadext-u6} and \Cref{deg10-ort-iso-equiv-cond} show that the study of questions on isotropy of involutions can be used to study problems relevant to the isotropy of quadratic forms over field extensions which a priori do not involve involutions.

Our main references are \cite{BOI} for the theory of algebras with involution, \cite{Knus91} for the theory of hermitian forms and \cite{EKM08} for the theory of quadratic forms.

We write $\nat$ for the set of natural numbers including zero and $\nat^+=\nat\setminus\{0\}$.

\section{Involutions and hermitian forms}\label{section:hermit&inv} 

Let $A$ be a central simple $F$-algebra. 
The \emph{degree, index and exponent of $A$} are denoted by $\deg A$, $\ind A$ and $\exp A$, respectively. 
Given another central simple $F$-algebra $B$, we write $A\sim B$ to indicate that $A$ and $B$  are Brauer equivalent.
By \cite[Chap.~8, Corollary 1.6]{Schar85},  we have  $A\simeq\mathbb{M}_s(D)$ for a unique $F$-division algebra $D$ and $s=\frac{\deg{A}}{\ind{A}}$, and we say that $A$ is \emph{split} if $D= F$, or equivalently, if $\ind A=1$.
By \cite[Chap.~8, Theorem 1.8]{Schar85}, every finitely generated $A$-right module $V$ decomposes into a direct sum of simple $A$-right modules. 
The number of simple components in a decomposition of $V$ as a direct sum of simple $A$-right modules is called the \emph{rank of} $V$ and denoted by $\rk V$.
It follows that $\rk V=\frac{\dim_FV}{s\cdot\dim_FD}$.
In particular, we have $\rk A=s$, and if $A$ is a division algebra, then $\rk V$ is the dimension of $V$ as an $A$-right vector space.

An \emph{$F$-involution on} $A$ is an anti-auto\-morphism $\sigma: A\to A$ such that $\sigma^2=\id_A$ and $\sigma|_{F}=\id_F$.  
If $A$ admits an $F$-involution, then $A\otimes_F A$ is split.
If $A$ is split, that is
$A\simeq\End_FV$ for some finite-dimensional $F$-vector space $V$,
then by \cite[Chap.~I, Theorem, p.~1]{BOI}, the $F$-involutions on $A$ are in one-to-one correspondence with the non-degenerate symmetric or alternating $F$-bilinear forms on $V$ up to a scalar factor in $F^{\times}$:
The correspondence is given by $b\mapsto\ad_b$ where $\ad_b$ is the \emph{adjoint involution} on $\End_FV$ of a non-degenerate symmetric or alternating $F$-bilinear form $b$ on $V$, which is determined by the rule
$$b(v,f(w))=b(\ad_b(f)(v),w)\quad \text{for all}\, v,w\in V\,\text{and all}\, f\in \End_FV\,.$$	

If $\sigma$ is an $F$-involution on $A$ and $K/F$ is a splitting field of $A$, whereby
$A\otimes_FK\simeq\End_KV$ for some finite-dimensional $K$-vector space $V$, then the $K$-involution $\sigma_K=\sigma\otimes\id_K$ on $A\otimes_FK$ corresponds under such an isomorphism to the adjoint involution of a non-degenerate symmetric or alternating $K$-bilinear form on $V$, and we call $\sigma$ \emph{symplectic} if that form is alternating and \emph{orthogonal} otherwise.
The nature of the associated bilinear form  depends neither on the choice of the splitting field $K/F$ nor on the isomorphism; see \cite[Proposition 2.6]{BOI}.
The property of $\s$ of being orthogonal or symplectic is called the \emph{type of~$\s$}.

Since the adjoint involution determines the bilinear form uniquely up to a factor in $F^{\times}$, orthogonal involutions on the split algebra $\End_FV$ are in one-to-one correspondence with non-degenerate symmetric bilinear forms on $V$ up to scaling. 
Moreover, any non-degenerate symmetric bilinear form on $V$ is given as the polar form $b_\varphi:V\times V\to F,(x,y)\mapsto \frac{1}2(\varphi(x+y)-\varphi(x)-\varphi(y))$
of a unique non-degenerate (regular) quadratic form $\varphi$ defined on $V$,
and we write $\ad_\varphi$ for the corresponding involution $\ad_{b_\varphi}$.
In what follows, we will use this correspondence of orthogonal involutions on $\End_FV$ with non-degenerate quadratic forms up to scaling.

A pair $(A,\s)$ consisting of a central simple $F$-algebra $A$ and an (\emph{orthogonal}, resp.~\emph{symplectic}) $F$-involution $\sigma$ on $A$ will be simply called
an \emph{$F$-algebra with} (\emph{orthogonal}, resp. \emph{symplectic}) \emph{involution}.

Let be given an $F$-algebra with involution $(B,\gamma)$.
Let $V$ be a finitely generated $B$-right module and let $\varepsilon\in\{\pm1\}$.
A bi-additive map $h:V\times V\to B$  is called an $\varepsilon$-\emph{hermitian form over} $(B,\gamma)$ if it satisfies the following:
\begin{itemize}
    \item $h(v\alpha,w\beta)=\gamma(\alpha)h(v,w)\beta$ for all $v,w\in V$ and $\alpha,\beta\in B$,
    \item $h(w,v)=\varepsilon\gamma(h(v,w))$ for all $v,w\in V$.
\end{itemize}
We also simply call $h$ \emph{hermitian} if $\varepsilon=1$ and \emph{skew-hermitian} if $\varepsilon=-1$.
For an $\varepsilon$-hermitian form $h$ over $(B,\gamma)$, we denote by $\rk h$ the rank of the underlying $B$-right module and call it the \emph{rank of $h$}.

A hermitian or skew-hermitian form over $(B,\gamma)$, defined on the $B$-right module $V$, is called \emph{isotropic} if $h(v,v)=0$ holds for some $v\in V\setminus\{0\}$, otherwise it is called \emph{anisotropic}.

Let $h:V\times V\to B$ be an $\varepsilon$-hermitian form over $(B,\gamma)$.
Consider a $B$-right submodule $U$ of $V$. The restriction of $h$ to $U\times U$ is an $\varepsilon$-hermitian form over $(B,\gamma)$, and we denote it by $h|_U$. Such a form $h|_U$ is called a \emph{subform of} $h$.
We define
\begin{equation*}
\begin{split}
    U^{\perp}&=\{v\in V\mid h(v,u)=0\quad\forall u\in U\}\\
    &=\{v\in V\mid h(u,v)=0\quad\forall u\in U\}.
\end{split}
\end{equation*}

The $\varepsilon$-hermitian form $h$ over $(B,\gamma)$ is called \emph{non-degenerate} if $V^{\perp}=\{0\}$, 
that is, if the only vector $v\in V$ such that $h(v,w)=0$ holds for all $w\in V$ is the vector $v=0$. 
Note that any anisotropic $\varepsilon$-hermitian form is non-degenerate.
A submodule $U$ of $V$ is called \emph{totally isotropic} if $h|_U=0$, or equivalently, if $U\subseteq U^{\perp}$.

The $\varepsilon$-hermitian form $h$ is called \emph{hyperbolic} if it is non-degenerate and there exists a totally isotropic $B$-submodule $U$ of $V$ with $\rk U=\frac{1}2 \rk h$, or equivalently, such that $U=U^{\perp}$.

We refer to \cite[Chap. I, \S 2.2 \& \S3.4]{Knus91} for the basic concepts of isometry ($\simeq$) and orthogonal sum ($\perp$)~for $\varepsilon$-hermitian forms.
We set $$\Sym^{\varepsilon}(B,\gamma)=\{\alpha\in B\mid\gamma(\alpha)=\varepsilon\alpha\}\,,$$ which is an $F$-linear subspace of $B$. We also write $\Skew(B,\gamma)=\Sym^{-1}(B,\gamma)$.
We say that the $\varepsilon$-hermitian form $h$ \emph{represents} the element $\alpha\in B$, if there exists $v\in V$ such that $\alpha=h(v,v)$; note that this implies that $\alpha\in\Sym^\varepsilon(B,\gamma)$.

For $\alpha\in\Sym^{\varepsilon}(B,\gamma)$, we denote by $\langle\alpha\rangle$ the $\varepsilon$-hermitian form $B\times B\to B$ given by $(v,w)\to\gamma(v)\alpha w$.
Given $n\in\nat^+$ and $\alpha_1,\ldots,\alpha_n\in\Sym^{\varepsilon}(B,\gamma)$, we denote by $\langle\alpha_1,\ldots,\alpha_n\rangle$ the $\varepsilon$-hermitian form obtained as the orthogonal sum $\langle\alpha_1\rangle\perp\ldots\perp\langle\alpha_n\rangle$ over $(B,\gamma)$.

\begin{prop}\label{half-rk-subform-hyper->iso}
    Assume that $B$ is a division algebra and $h$ is a hyperbolic $\varepsilon$-hermitian form over $(B,\gamma)$. Then every subform $h'$ of $h$ with $\rk h'>\frac{1}{2}\rk h$ is isotropic.
\end{prop}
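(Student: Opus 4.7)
The plan is to exploit a totally isotropic submodule $U\subseteq V$ of rank $\tfrac12\rk h$ witnessing hyperbolicity of $h$, and to force any submodule $W\subseteq V$ of rank strictly greater than $\tfrac12\rk h$ to meet $U$ non-trivially; any non-zero vector in such an intersection is isotropic for $h'=h|_W$, since $U$ is totally isotropic.

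Because $B$ is a division algebra, every finitely generated $B$-right module is free, every submodule is a direct summand, and ranks are additive in short exact sequences of $B$-right modules. Applied to $0\to U\to V\to V/U\to 0$, this yields $\rk(V/U)=\rk V-\rk U=\rk h-\tfrac12\rk h=\tfrac12\rk h$.

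Next, writing $h'=h|_W$ with $\rk W>\tfrac12\rk h$, I would consider the composition $\pi\colon W\hookrightarrow V\twoheadrightarrow V/U$, whose kernel is $W\cap U$. Rank additivity applied to $0\to W\cap U\to W\to \pi(W)\to 0$, together with the obvious bound $\rk\pi(W)\leq \rk(V/U)=\tfrac12\rk h$, gives
\[
\rk(W\cap U)\;\geq\; \rk W-\tfrac12\rk h \;>\; 0.
\]
Hence there exists $v\in W\cap U$ with $v\neq 0$, and then $h'(v,v)=h(v,v)=0$ because $U$ is totally isotropic, so $h'$ is isotropic. The only conceptual input is rank additivity in short exact sequences of $B$-modules, which holds precisely because $B$ is a division algebra; there is no real obstacle beyond this linear-algebra reduction, and the equivalence $\rk U=\tfrac12\rk h \Leftrightarrow U=U^\perp$ in the definition of hyperbolicity supplies exactly the half-rank isotropic submodule needed.
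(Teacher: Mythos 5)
Your argument is correct and is essentially identical to the paper's own proof: both pick a totally isotropic submodule of rank $\frac{1}{2}\rk h$ witnessing hyperbolicity and conclude by a dimension count that it must meet any submodule of larger rank non-trivially. You merely spell out the dimension count via rank additivity, which the paper leaves implicit.
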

\begin{proof}
    Let $V$ be the $B$-right vector space on which $h$ is defined.  
    Since $h$ is hyperbolic, there exists a totally isotropic $B$-subspace $W$ of $V$ such that $\dim_BW=\frac{1}{2}\rk h=\frac{1}{2}\dim_BV$.
    Let now $U$ be a $B$-right subspace of $V$ with $\dim_BU>\frac{1}{2}\dim_BV$. In view of the dimensions, we have that $W\cap U\neq\{0\}$. 
    As $h|_{W}=0$, we deduce that $h|_U$ is isotropic. 
\end{proof}

Up to isometry, there is a unique hyperbolic $\varepsilon$-hermitian form of rank $2$, and we denote this form by $\hh$; see \cite[Chap. I, \S3.5]{Knus91}.

\begin{lem}\label{hermit-isot->hyperplane-subform} 
    Assume that $B$ is a division algebra. Let $h$ be a non-degenerate isotropic $\varepsilon$-hermitian form over $(B,\gamma)$. 
    Then $h$ admits a subform isometric to $\hh$. 
    In particular $h$ represents every element of $\Sym^{\varepsilon}(B,\gamma)$.
\end{lem}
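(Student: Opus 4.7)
The plan is to exhibit a hyperbolic plane inside $h$ by direct construction. Using isotropy, I pick $v_0\in V\setminus\{0\}$ with $h(v_0,v_0)=0$. By non-degeneracy there exists $u\in V$ with $h(v_0,u)\neq 0$, and since $B$ is a division algebra, right-multiplying $u$ by $h(v_0,u)^{-1}$ yields $w_0\in V$ with $h(v_0,w_0)=1$, hence $h(w_0,v_0)=\varepsilon$.

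The next step is to modify $w_0$ so that it becomes isotropic while preserving $h(v_0,w_0)=1$. For $\alpha\in B$, set $w=w_0+v_0\alpha$; the relation $h(v_0,v_0)=0$ together with the sesqui-linearity rules gives
\[
h(w,w)=h(w_0,w_0)+\gamma(\alpha)+\varepsilon\alpha\,.
\]
Since $h(w_0,w_0)\in\Sym^{\varepsilon}(B,\gamma)$, the choice $\alpha=-\tfrac{1}{2}\varepsilon\, h(w_0,w_0)$ cancels both summands on the right; this is the only place where $\car F\neq 2$ is genuinely used, and it is the step I expect to be the main subtle point of the argument. The vectors $v_0$ and $w$ are $B$-linearly independent, since $w=v_0\beta$ would force $1=h(v_0,w)=h(v_0,v_0)\beta=0$. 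Hence $U=v_0B+wB$ is a free $B$-submodule of rank $2$ on which $h$ has Gram matrix $\bigl(\begin{smallmatrix}0&1\\ \varepsilon&0\end{smallmatrix}\bigr)$ with respect to $(v_0,w)$; this is a non-degenerate $\varepsilon$-hermitian form containing the rank-$1$ totally isotropic submodule $v_0B$, so $h|_U$ is hyperbolic of rank $2$ and must be isometric to $\hh$ by the uniqueness statement recorded just before the lemma.

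For the final assertion, any element represented by a subform of $h$ is tautologically represented by $h$, so it suffices to verify that $\hh$ represents every $\alpha\in\Sym^{\varepsilon}(B,\gamma)$. Working inside $U$, the Gram matrix yields
\[
h(v_0\beta+w,\,v_0\beta+w)=\gamma(\beta)+\varepsilon\beta
\]
for every $\beta\in B$, and plugging in $\beta=\tfrac{1}{2}\varepsilon\alpha$ produces the value $\alpha$ upon using $\gamma(\alpha)=\varepsilon\alpha$. This completes the sketch; everything reduces to the two-dimensional computation made possible by the characteristic hypothesis.
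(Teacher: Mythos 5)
Your argument is correct, and it is genuinely more self-contained than the paper's. For the existence of the hyperbolic plane, the paper simply cites \cite[Corollary 3.7.4]{Knus91}, whereas you reprove that fact from scratch: the normalisation $h(v_0,w_0)=1$ using that $B$ is a division ring, the correction $w=w_0+v_0\alpha$ with $\alpha=-\tfrac12\varepsilon\,h(w_0,w_0)$, and the verification $\gamma(\alpha)+\varepsilon\alpha=-h(w_0,w_0)$ (valid because $h(w_0,w_0)\in\Sym^{\varepsilon}(B,\gamma)$ and $\tfrac12\varepsilon\in F$ is fixed by $\gamma$) are all correct, and the identification of $h|_U$ with $\hh$ via the uniqueness of the rank-$2$ hyperbolic form is exactly the right way to close. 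For the representation statement the two routes diverge more substantially: the paper observes that for $\alpha\in\Sym^{\varepsilon}(B,\gamma)\setminus\{0\}$ the form $\la\alpha,-\alpha\ra$ is non-degenerate, isotropic of rank $2$, hence isometric to $\hh$, so $\hh$ represents $\alpha$; you instead compute $h(v_0\beta+w,v_0\beta+w)=\gamma(\beta)+\varepsilon\beta$ and solve with $\beta=\tfrac12\varepsilon\alpha$. Your computation is correct and has the small advantage of producing an explicit representing vector (and of handling $\alpha=0$ without a separate remark), while the paper's argument is shorter because it reuses the classification of rank-$2$ isotropic forms. Both uses of $\tfrac12$ are legitimate since the standing hypothesis is $\car F\neq2$.
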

\begin{proof}
    See \cite[Corollary 3.7.4]{Knus91} for fact that $h$ admits $\hh$ as a subform. For any $\alpha\in\Sym^{\varepsilon}(B,\gamma)\setminus\{0\}$, we have that the $\varepsilon$-hermitian form $\la \alpha,-\alpha\ra$ over $(B,\gamma)$ is non-degenerate, isotropic and of rank $2$, and consequently isometric to $\hh$, whereby in particular $\alpha$ is represented by $\hh$.
\end{proof}

Let $V$ be a finitely generated $B$-right module. 
Let $h:V\times V\to B$ be a non-degenerate hermitian or skew-hermitian form over $(B,\gamma)$. 
One associates to $h$ an involution $\ad_h$ on $\End_BV$ determined by the rule
$$h(v,f(w))=h(\ad_h(f)(v),w)\quad \text{for all}\, v,w\in V\, \text{and all}\, f\in\End_BV.$$
We call $\ad_h$ the \emph{adjoint involution of} $h$. 

\begin{thm}\label{correspondence-(skew)hermit-involution}
    Any $F$-involution $\sigma$ on $\End_BV$ is the adjoint involution $\ad_h$ of some non-degenerate hermitian or skew-hermitian form $h$ over $(B,\gamma)$, which is unique up to a factor in $F^{\times}$.
    The involutions $\sigma$ and $\gamma$ have the same type if $h$ is hermitian, and they have different types if $h$ is skew-hermitian.
\end{thm}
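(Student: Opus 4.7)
The plan is to reduce the theorem to a Skolem--Noether argument: first produce a convenient reference form on $V$ over $(B,\gamma)$, and then twist it so that its adjoint involution becomes~$\sigma$. Concretely, decompose $V$ as a direct sum of simple $B$-modules and take the standard ``diagonal'' hermitian form $h_0$, obtaining an adjoint involution $\ad_{h_0}$ on $\End_BV$. The composition $\tau=\sigma\circ\ad_{h_0}$ is then an $F$-algebra automorphism of the central simple $F$-algebra $\End_BV$, and by Skolem--Noether $\tau=\Int(u)$ for some unit $u\in\End_BV$.

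Define $h:V\times V\to B$ by $h(v,w)=h_0(v,u(w))$. Using the defining property of $\ad_{h_0}$, an elementary computation shows $\ad_h=\sigma$. Moreover, the hermitian symmetry $h(w,v)=\varepsilon\gamma(h(v,w))$ translates into the identity $\ad_{h_0}(u)=\varepsilon u$ on the module side. The condition $\sigma^2=\id$ forces $\Int(u\cdot\ad_{h_0}(u)^{-1})=\id$, so $\ad_{h_0}(u)=cu$ for some $c\in F^\times$; applying $\ad_{h_0}$ once more yields $c^2=1$, hence $c=\pm 1$, selecting the sign $\varepsilon\in\{\pm 1\}$. For uniqueness, if $h_1$ and $h_2$ both have adjoint $\sigma$, the same Skolem--Noether argument produces $u$ with $h_2(v,w)=h_1(v,u(w))$ and $\Int(u)=\id$, forcing $u\in F^\times$ and $h_2=\lambda h_1$ for some $\lambda\in F^\times$.

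For the type comparison, extend scalars to a splitting field $L$ of $B$ and identify $(B,\gamma)_L\simeq(\End_LW,\ad_\beta)$, where $\beta$ is symmetric (resp.~alternating) exactly when $\gamma$ is orthogonal (resp.~symplectic). The Morita equivalence between $(\End_LW)$-modules and $L$-vector spaces translates the $\varepsilon$-hermitian form $h_L$ into an ordinary bilinear form $h'$ on some $L$-vector space $V'$, with $\End_{B_L}V_L\simeq\End_LV'$ intertwining $\sigma_L$ with $\ad_{h'}$. A direct check of the Morita dictionary shows that $h'$ is symmetric (resp.~alternating) iff $\varepsilon\beta$ is symmetric (resp.~alternating). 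Reading off the type of $\sigma_L$ yields the announced rule: $\sigma$ and $\gamma$ have the same type iff $\varepsilon=+1$, and different types iff $\varepsilon=-1$.

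The main obstacle is this last step. The types of $\sigma$ and $\gamma$ are defined only after scalar extension, so one must both verify that the Morita correspondence respects the symmetric/alternating dichotomy in the claimed way and confirm that the resulting answer is independent of the splitting field $L$ (as already recalled from \cite[Proposition 2.6]{BOI} for the type of a single involution). All the earlier steps are routine once Skolem--Noether is invoked; the real work lies in the bookkeeping that relates the sign $\varepsilon$ to the symmetry type of $\beta$ under Morita equivalence.
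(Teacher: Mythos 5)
The paper offers no argument of its own here: its ``proof'' is a citation of \cite[Theorem 4.2]{BOI}, and your outline (reference form, Skolem--Noether twist, scalar extension plus Morita transfer for the type comparison) is essentially the standard proof given there. The approach is therefore the right one, but the ``elementary computation'' you invoke does not come out as claimed with your conventions. From $\tau=\sigma\circ\ad_{h_0}=\Int(u)$ you get $\sigma=\Int(u)\circ\ad_{h_0}$, whereas for $h(v,w)=h_0(v,u(w))$ the defining property of $\ad_{h_0}$ gives
\[
h(v,f(w))=h_0\bigl((\ad_{h_0}(f)\circ\ad_{h_0}(u))(v),\,w\bigr),
\]
hence $\ad_h=\Int(\ad_{h_0}(u))^{-1}\circ\ad_{h_0}$. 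Combined with your (correct) relation $\ad_{h_0}(u)=\varepsilon u$, this is $\Int(u)^{-1}\circ\ad_{h_0}$, which coincides with $\sigma$ only when $u^2\in F^{\times}$. The fix is immediate: take $h(v,w)=h_0(v,u^{-1}(w))$, or equivalently define $\tau$ as $\ad_{h_0}\circ\sigma$; but as written the central verification fails.

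Two smaller points. The existence of the diagonal reference form $h_0$ is clear when $B$ is a division algebra (then $V$ is free and $\la 1,\ldots,1\ra$ serves, which covers every use of the theorem in this paper), but for general $B$ a finitely generated module need not be free, so one must separately produce a non-degenerate hermitian form on a simple $B$-module. And in the type comparison, the phrase ``$h'$ is symmetric iff $\varepsilon\beta$ is symmetric'' cannot be meant literally, since multiplying a bilinear form by $\pm1$ does not change whether it is symmetric; the correct dictionary is that $h'$ is symmetric exactly when $\varepsilon=+1$ and $\beta$ is symmetric, or $\varepsilon=-1$ and $\beta$ is alternating. You candidly defer this bookkeeping, and it is indeed where the substance of the second assertion lies; it is carried out in \cite[\S 4.A]{BOI}.
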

\begin{proof}
    See \cite[Theorem 4.2]{BOI}.
\end{proof}

We denote by $\Ad_B(h)$ the $F$-algebra with involution $(\End_BV,\ad_h)$.

\begin{exmp}\label{ad-inv-1-dim-hermit}
    For $\varepsilon\in\{\pm1\}$ and $\alpha\in B^{\times}\cap\Sym^{\varepsilon}(B,\gamma)$, we have  $$\Ad_B(\langle\alpha\rangle)\simeq(B,\Int(\alpha^{-1})\circ\gamma).$$
\end{exmp}

Let $(A,\sigma)$ be an $F$-algebra with involution.
We say that $(A,\sigma)$, or simply $\sigma$, is \emph{isotropic} if there exists some $a\in A\setminus\{0\}$ such that $\sigma(a)a=0$, and \emph{anisotropic} otherwise. 
If there exists some $e\in A$ with $e^2=e$ and $\sigma(e)=1-e$, then we call $(A,\sigma)$, or simply $\sigma$, 
\emph{hyperbolic}.
In particular, any hyperbolic involution is isotropic.
Note that, if $A$ is a division algebra, then any involution on $A$ is anisotropic.

The following statement allows us to switch swiftly back and forth between isotropy for a hermitian or skew-hermitian form and for its adjoint involution. We will often use it without explicit reference.

\begin{prop}\label{isotropic-hyperbolic-inv-equiv-cond}
    Let $(A,\sigma)$ be an $F$-algebra with involution.  
    Let $D$ be a central $F$-division algebra Brauer equivalent to $A$ and $\gamma$ an involution on $D$. 
    Let $V$ be a finite-dimensional $D$-right vector space such that $A\simeq\End_DV$ and 
    $h: V\times V\to D$ a non-degenerate $\varepsilon$-hermitian form over $(D,\gamma)$ such that $\sigma=\ad_h$.
    Then $\sigma$ is isotropic (respectively hyperbolic) if and only if $h$ is isotropic (respectively hyperbolic).	
\end{prop}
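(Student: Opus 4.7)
The plan is to pass between the algebra $A = \End_D V$ and the module $V$ using the defining adjoint identity
$$h(v, f(w)) = h(\sigma(f)(v), w)\qquad\text{for all }v, w \in V, \ f \in \End_D V.$$
Both equivalences (isotropy and hyperbolicity) then split into a construction direction (building an element of $A$ from a vector, or a subspace from an idempotent) and an extraction direction (pulling a vector out of an element of $A$, or a subspace out of an idempotent).

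For the isotropy equivalence, the direction ``$\sigma$ isotropic $\Rightarrow h$ isotropic'' is the shorter one: given $a \in A \setminus \{0\}$ with $\sigma(a)a = 0$, I pick $w \in V$ with $a(w) \ne 0$ and set $v := a(w)$; a single application of the adjoint identity then yields
$$h(v, v) = h(a(w), a(w)) = h(\sigma(a)(a(w)), w) = h((\sigma(a) a)(w), w) = 0.$$
Conversely, given $v \ne 0$ with $h(v, v) = 0$, I plan to define the rank-one endomorphism $a \in \End_D V$ by $a(x) := v \cdot h(v, x)$. It is $D$-linear because $h$ is right $D$-linear in the second slot, non-zero because $h$ is non-degenerate, and satisfies $a^2 = 0$ since $h(v, v) = 0$. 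The $\varepsilon$-hermitian relation $h(y, v) = \varepsilon \gamma(h(v, y))$ together with the adjoint identity then forces $\sigma(a) = \varepsilon a$, so $\sigma(a) a = \varepsilon a^2 = 0$.

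For the hyperbolicity equivalence, first assume $\sigma$ is hyperbolic via an idempotent $e$ with $\sigma(e) = 1 - e$. Setting $U := e(V)$ and $U' := (1-e)(V)$, one has $V = U \oplus U'$, and for $u_1, u_2 \in U$ the equality $e(u_2) = u_2$ together with the adjoint identity gives
$$h(u_1, u_2) = h(u_1, e(u_2)) = h(\sigma(e)(u_1), u_2) = h((1 - e)(u_1), u_2) = 0,$$
and symmetrically $U'$ is totally isotropic. Since $h$ is non-degenerate and $V$ is the direct sum of two totally isotropic subspaces, both must have $D$-dimension $\tfrac{1}{2} \rk h$; in particular $h$ is hyperbolic. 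Conversely, starting from a totally isotropic $U$ with $U = U^\perp$, I will invoke the structure theorem for hyperbolic $\varepsilon$-hermitian forms from \cite[Chap.~I, \S3.5]{Knus91} to produce a totally isotropic complement $U'$ with $V = U \oplus U'$. The projection $e$ onto $U$ along $U'$ is then an idempotent in $A$, and running the displayed computation in reverse confirms $\sigma(e) = 1 - e$.

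The main obstacle is the last step: producing a totally isotropic complement $U'$ to a given lagrangian $U$. This is the one place where the argument relies on nontrivial structure (the classification of hyperbolic hermitian forms in Knus); everything else reduces to a one-line application of the adjoint identity.
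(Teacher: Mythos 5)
Your proof is correct; note, though, that the paper itself gives no argument here but simply cites \cite[Corollary 1.8 and Theorem 2.1]{BFST93}, so what you have written is a self-contained reconstruction of the standard proof rather than a variant of anything in the text. All four directions check out: the computation $h(a(w),a(w))=h((\sigma(a)a)(w),w)=0$ is exactly right; the rank-one map $a(x)=v\,h(v,x)$ does satisfy $\ad_h(a)=\varepsilon a$ (one verifies $h(y,a(x))=h(y,v)h(v,x)=h(\varepsilon a(y),x)$ using $\gamma(h(y,v))=\varepsilon h(v,y)$), so $\sigma(a)a=\varepsilon a^2=0$ with $a\neq0$ by non-degeneracy; and the idempotent/lagrangian dictionary works as you describe, since for the projection $e$ onto $U$ along a totally isotropic complement $U'$ one has $h(y,e(x))=h(w',u)=h((1-e)(y),x)$ in the obvious notation. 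Two small points deserve a word each: the claim that two complementary totally isotropic subspaces of a non-degenerate form each have dimension $\tfrac12\dim_DV$ follows from $U\subseteq U^{\perp}$ and $\dim_DU^{\perp}=\dim_DV-\dim_DU$; and the existence of a totally isotropic complement to a lagrangian, which you rightly single out as the only non-formal input, is indeed covered by the structure of hyperbolic spaces $U\oplus U^{*}$ in \cite[Chap.~I, \S3.5]{Knus91} and is unproblematic here since $\car F\neq2$. The payoff of your route is that the proposition becomes elementary and independent of \cite{BFST93}; the cost is only the appeal to Knus for that one complement.
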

\begin{proof} 
    See \cite[Corollary 1.8 and Theorem 2.1]{BFST93}.
\end{proof} 

Let $(A,\sigma), (A_1,\sigma_1)$ and $(A_2, \sigma_2)$ be $F$-algebras with involution. 
Following \cite{QMT15}, we say that $(A,\sigma)$ is an \emph{orthogonal sum of $(A_1,\sigma_1)$ and $(A_2,\sigma_2)$}  and we write
$$(A,\sigma)\in(A_1,\sigma_1)\boxplus(A_2,\sigma_2)$$
if there exist an $F$-division algebra with involution $(D,\gamma)$, $\varepsilon\in\{\pm 1\}$ and two non-degenerate $\varepsilon$-hermitian forms  $h_i:V_i\times V_i\to D$ over $(D,\gamma)$ such that $(A_i,\sigma_i)\simeq\Ad_D(h_i)$ for $i=1,2$ and $(A,\sigma)\simeq\Ad_D(h_1\perp h_2)$. 
In particular, we have that $A, A_1$ and $A_2$ are all Brauer equivalent to $D$ and the involutions $\sigma, \sigma_1$ and $\sigma_2$ have the same type. 
Note that, in general, the $F$-algebra with involution $(A,\sigma)$ is not uniquely determined: 
For any $\lambda_1,\lambda_2\in F^{\times}$, the $F$-algebra with involution $\Ad_D(\lambda_1h_1\perp\lambda_2h_2)$ is also an orthogonal sum of $(A_1,\sigma_1)$ and $(A_2,\sigma_2)$, but it is not necessarily isomorphic to $\Ad_D(h_1\perp h_2)$.

\section{Cohomological invariants}\label{section:cohomological invariants-inv}	 
Let $n\in\nat^+$ and let $A$ be a central simple $F$-algebra with $\deg A=2n$. 
Let $\sigma$ be an orthogonal involution on $A$. 
For any $a,b\in A^{\times}\cap\Skew(A,\sigma)$, we have $\Nrd_A(a)\sq{F}=\Nrd_A(b)\sq{F}$, by \cite[Proposition 7.1]{BOI}.
The \emph{discriminant of} $\sigma$ is defined as the class
$$e_1(\sigma)=(-1)^n\Nrd_A(a) F^{\times2} \mbox{ in } F^{\times}/F^{\times2}$$
for an arbitrary $a\in A^{\times}\cap \Skew(A,\s)$.
We say that $\sigma$ has \emph{trivial discriminant} if $e_1(\sigma)$ is the trivial square-class in $F^{\times}/F^{\times2}$.

By an \emph{$F$-quaternion algebra} we mean a central simple $F$-algebra of degree $2$.
Any $F$-quaternion algebra $Q$ carries a unique symplectic involution, called the \emph{canonical involution of $Q$}, and we will denote it by $\can_Q$.  
For $a,b\in\mg{F}$, we denote by $(a,b)_F$ the $F$-quaternion algebra generated by two elements $i$ and $j$ satisfying the relations  $i^2=a,j^2=b\mbox{ and }ij+ji=0$.
Any $F$-quaternion algebra is isomorphic to $(a,b)_F$ for certain $a,b\in\mg{F}$.

For later use we make the following observation.
\begin{lem}\label{1-dim skew-herm-over-quaternion}
    Let $Q$ be an $F$-quaternion algebra. Let $h$ be a non-degenerate skew-hermitian form over $(Q,\can_Q)$. 
    Assume that $h$ represents an element $\alpha\in Q^{\times}$ with $-\Nrd_Q(\alpha)\in F^{\times2}$. 
    Then $Q$ is split and $\Ad_Q(h)$ is isotropic. 
\end{lem}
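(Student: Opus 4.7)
\medskip\noindent
\textbf{Proof plan.} The plan is to argue in two stages: first extract algebraic information about $\alpha$ from the hypothesis, and then use it to construct an idempotent in $Q$ that yields an explicit isotropic vector of $h$.

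First I would observe that, since $h$ is skew-hermitian, the element $\alpha$ represented by $h$ lies in $\Skew(Q,\can_Q)$. But the skew-symmetric elements of $(Q,\can_Q)$ are exactly the pure quaternions, i.e.\ the elements $x\in Q$ of reduced trace zero. For such $x$ one has $\Nrd_Q(x) = x\can_Q(x) = -x^2$, so the hypothesis $-\Nrd_Q(\alpha)\in\sq{F}$ becomes $\alpha^2 = c^2$ for some $c\in\mg{F}$. Writing this as $(\alpha-c)(\alpha+c)=0$ and noting that $\alpha\neq\pm c$ (since $\alpha$ is a nonzero pure quaternion, hence not in $F$), we obtain a nontrivial pair of zero divisors in $Q$, which forces $Q$ to be split.

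For the isotropy of $\Ad_Q(h)$, by \Cref{isotropic-hyperbolic-inv-equiv-cond} it suffices to show that $h$ itself is isotropic. Let $V$ be the underlying $Q$-module and pick $v\in V$ with $h(v,v)=\alpha$. The natural candidate for an isotropic vector is $ve$, where $e=\frac{1}{2}(1-\alpha c^{-1})\in Q$. A direct computation shows $e^2=e$ and $\can_Q(e)=\frac{1}{2}(1+\alpha c^{-1})$, so
\[
\can_Q(e)\,\alpha\,e \;=\; \tfrac{1}{4}(1+\alpha c^{-1})\,\alpha\,(1-\alpha c^{-1}) \;=\; \tfrac{1}{4}\,\alpha\,(1-\alpha^2c^{-2}) \;=\; 0.
\]
Then $h(ve,ve)=\can_Q(e)\,h(v,v)\,e=\can_Q(e)\,\alpha\, e=0$, and $e\neq 0$ because $\alpha c^{-1}\notin F$.

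The only subtle point is to verify that $ve$ is actually nonzero in $V$. Here I would use invertibility of $\alpha$: if $ve=0$, then $0=h(ve,v)=\can_Q(e)\,h(v,v)=\can_Q(e)\,\alpha$, and since $\alpha\in\mg{Q}$ this forces $\can_Q(e)=0$, hence $e=0$, contradicting what was established above. Thus $ve$ is a nonzero isotropic vector of $h$, and this completes the proof. I expect this last nonvanishing verification to be the only step requiring care; the rest is straightforward manipulation of pure quaternions and the canonical involution.
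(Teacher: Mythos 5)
Your proof is correct and follows essentially the same route as the paper: both arguments deduce $\alpha^2=c^2$ from the pure-quaternion identity $-\Nrd_Q(\alpha)=\alpha^2$, conclude splitness from the zero divisors $\alpha\pm c$, and then produce an isotropic vector by right-multiplying a vector representing $\alpha$ by one of these zero divisors (your idempotent $e=\tfrac12(1-\alpha c^{-1})$ is just the scalar multiple $\tfrac1{2c}(c-\alpha)$, while the paper uses $\alpha+c$). Your verification that the resulting vector is nonzero, via $h(ve,v)=\can_Q(e)\alpha$ and invertibility of $\alpha$, is a clean variant of the paper's check that $h(u,v)=\alpha(\alpha+c)\neq0$.
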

\begin{proof}
    Since $\alpha$ is represented by $h$, we deduce that $\alpha$ is a pure quaternion in $Q$. 
    Hence $-\Nrd_Q(\alpha)=-\alpha\can_Q(\alpha)=\alpha^2$. 
    It follows by our assumption that there exists some $c\in F^{\times}$ such that $\alpha^2=c^2$. 
    Hence $(\alpha-c)(\alpha+c)=0$ in $Q$. Since $\alpha\not\in F$, this implies that $Q$ is split.
	
    Let now $V$ be the underlying $Q$-right module of $h$ and let $u\in V$ be such that $h(u,u)=\alpha$. 
    Set $v=u(\alpha+c)$. 
    Note that $\alpha\in\mg{Q}\setminus F$ and therefore $h(u,v)=\alpha(\alpha+c)\neq 0$.
    In particular $v\in V\setminus\{0\}$.
    As $$h(v,v)=h(u(\alpha+c),v)=\can_Q(\alpha+c)h(u,v)=(c-\alpha)\alpha(\alpha+c)=\alpha(c^2-\alpha^2)=0\,,$$ 
    we obtain that $h$ is isotropic.
    Hence $\ad_h$ is isotropic, by \Cref{isotropic-hyperbolic-inv-equiv-cond}.
\end{proof}

\begin{prop}\label{L2}
    Let $(A_0,\sigma_0)$ and $(A_1,\sigma_1)$ be $F$-algebras with orthogonal involution of index $2$ and such that $A_0\sim A_1$.
    Assume that $(A_0,\sigma_0)\boxplus (A_1,\s_1)$ contains some isotropic $F$-algebra with involution.
    Then there exists a quadratic field extension $L/F$ such that $(A_i,\sigma_i)_L$ is split and isotropic for $i=0,1$.
\end{prop}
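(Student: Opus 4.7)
The plan is to pass from the orthogonal involutions to skew-hermitian forms over the common quaternion algebra $Q$ Brauer equivalent to $A_0$ and $A_1$ (such $Q$ exists since both have index $2$) and then to apply \Cref{1-dim skew-herm-over-quaternion}. By \Cref{correspondence-(skew)hermit-involution}, since the $\sigma_i$ are orthogonal and $\can_Q$ is symplectic, there are non-degenerate skew-hermitian forms $h_i$ over $(Q,\can_Q)$ on $Q$-right vector spaces $V_i$ with $(A_i,\sigma_i)\simeq\Ad_Q(h_i)$. Unfolding the definition of $\boxplus$ and the isotropy hypothesis yields $\mu_0,\mu_1\in F^\times$ with $\mu_0h_0\perp\mu_1h_1$ isotropic; rescaling, we set $\mu:=\mu_1/\mu_0\in F^\times$ so that $h_0\perp\mu h_1$ is isotropic.

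Suppose first I can find $v_i\in V_i$ with $\alpha_i:=h_i(v_i,v_i)\in Q^\times$ and $\alpha_0=-\mu\alpha_1$. Both $\alpha_i$ are then non-zero pure quaternions, and $a:=\alpha_0^2=-\Nrd_Q(\alpha_0)\in F^\times$. Because $Q$ is a division algebra and $\alpha_0\notin F$, the element $a$ cannot be a square in $F$ (otherwise $(\alpha_0-c)(\alpha_0+c)=0$ in $Q$ for $c\in F^\times$ with $c^2=a$), so $L:=F(\sqrt{a})$ is a quadratic field extension of $F$; the assignment $\sqrt{a}\mapsto\alpha_0$ embeds $L$ into $Q$, whence $Q_L$ is split. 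From $\alpha_0=-\mu\alpha_1$ one gets $\alpha_1^2=a/\mu^2\in L^{\times 2}$. Hence for each $i$, the form $(h_i)_L$ over $(Q_L,(\can_Q)_L)$ represents $\alpha_i\in Q_L^\times$ with $-\Nrd_{Q_L}(\alpha_i)\in L^{\times 2}$, and \Cref{1-dim skew-herm-over-quaternion} yields that $Q_L$ is split and $(A_i,\sigma_i)_L\simeq\Ad_{Q_L}((h_i)_L)$ is isotropic, as desired.

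It remains to exhibit such $v_0,v_1$, which I expect to be the main technical point. If any isotropic vector of $h_0\perp\mu h_1$ already has both coordinates giving non-zero quaternion values, we are done. Otherwise every isotropic vector $(w_0,w_1)$ satisfies $h_0(w_0,w_0)=0=h_1(w_1,w_1)$, so at least one of the $h_i$, say $h_0$, is itself isotropic and therefore, by \Cref{hermit-isot->hyperplane-subform}, represents every element of $\Skew(Q,\can_Q)$. On the other hand, $h_1$ is non-degenerate and non-zero, and a short polarization argument (exploiting $\Sym(Q,\can_Q)=F$ together with the $Q$-sesquilinearity of $h_1$ and the assumption $\car F\neq 2$) forces $h_1$ to represent some $\beta\in\Skew(Q,\can_Q)\cap Q^\times$. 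Choosing $v_1$ with $h_1(v_1,v_1)=\beta$ and, by universality of $h_0$, some $v_0$ with $h_0(v_0,v_0)=-\mu\beta$ then completes the reduction, after which the argument of the previous paragraph applies verbatim.
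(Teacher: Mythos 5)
Your proposal is correct and follows essentially the same route as the paper: pass to skew-hermitian forms $h_0,h_1$ over $(Q,\can_Q)$, extract from the isotropy of the orthogonal sum a common represented invertible pure quaternion (up to a scalar in $F^{\times}$), take $L$ to be the quadratic subfield it generates, and conclude with \Cref{1-dim skew-herm-over-quaternion}. The case analysis for producing the common value (both forms anisotropic versus one isotropic and hence universal on $\Skew(Q,\can_Q)$ by \Cref{hermit-isot->hyperplane-subform}) is exactly the paper's argument, with your polarization remark merely making explicit a point the paper leaves implicit.
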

\begin{proof}
    Let $Q$ denote the $F$-quaternion division algebra which is Brauer equivalent to $A_0$ and $A_1$.
    By the assumption, there exist two non-degenerate skew-hermitian forms $h_0$ and $h_1$  
    over $(Q,\can_Q)$ with $(A_i,\sigma_i)\simeq\Ad_Q(h_i)$ for $i=0,1$ and such that $h_0\perp h_1$ is isotropic. 
  
    We claim that there exists an element $\alpha\in \Skew(Q,\can_Q)\setminus\{0\}$ which is represented by $h_0$ as well as by $-h_1$. 
    The fact that $h_0\perp h_1$ is isotropic readily implies this if one assumes $h_0$ and $h_1$ to be both anisotropic.
    On the other hand,  by \Cref{hermit-isot->hyperplane-subform} every isotropic non-degenerate skew-hermitian form over $(Q,\can_Q)$ represents every element of $\Skew(Q,\can_Q)$. 
    Therefore the claim also follows if any of the forms $h_0$ and $h_1$ is isotropic.
 
    We fix such an element $\alpha$ and set $L=F(\alpha)$.
    Since $Q$  is a division algebra and $\alpha\in Q\setminus F$, we have that $L$ is a quadratic field extension of $F$ contained in $Q$. 
    Hence $Q_L$ is split. 
    As $-\Nrd_Q(\alpha)=\alpha^2\in \sq{L}$ and $\alpha$ is represented by $h_0$ and $-h_1$, we conclude by \Cref{1-dim skew-herm-over-quaternion} that $(A_i,\sigma_i)_L$ is split and isotropic for $i=0,1$. 
 \end{proof}

For an $F$-algebra with orthogonal involution $(A,\sigma)$, we denote by $C(A,\sigma)$ the \emph{Clifford algebra of $(A,\sigma)$}. 
It is a semi-simple $F$-algebra; we refer to \cite[Section 8]{BOI} for its definition.

\begin{exmp}\label{clifford-ort-inv-biquat}
    Let $Q_1,Q_2$ be $F$-quaternion algebras and set $A=Q_1\otimes_FQ_2$. 
    Let further $\sigma=\can_{Q_1}\otimes\can_{Q_2}$. Then $\sigma$ is an orthogonal involution on $A$. 
    It is shown in \cite[Example 8.19]{BOI} that $C(A,\sigma)\simeq Q_1\times Q_2$. 
\end{exmp}

For a quadratic field extension $K/F$ and a central simple $K$-algebra $A$, we denote by $\corr_{K/F}A$ the corestriction of $A$ to $F$ as defined in \cite[Section 8]{Draxl}.
The following theorem determines the structure of the Clifford algebra of an algebra with orthogonal involution of even degree.

We denote by $\Br(F)$ the Brauer group of $F$. We sometimes write $[A]$ for the class in $\Br(F)$ given by a central simple $F$-algebra $A$.

\begin{thm}\label{structure-clifford-awi}
    Let $n\in\nat^+$ and let $(A,\sigma)$ be an $F$-algebra with orthogonal involution  with $\deg A=2n$.
    Let $d\in \mg{F}$ be such that $e_1(\s)=d \sq{F}$.
    Let $K$ be the center of $C(A,\s)$. 
    Then $K$ is a quadratic  étale $F$-algebra isomorphic to $F[X]/(X^2-d)$, and we have the following two cases:
\begin{enumerate}[$(1)$]
    \item If $e_1(\s)$ is non-trivial, then $K$ is a field, $C(A,\sigma)$ is a central simple $K$-algebra of degree $2^{n-1}$,  
    $2[C(A,\sigma)]=n[A_K]$ in $\Br(K)$ and $[\corr_{K/F} C(A,\s)]=(n+1)[A]$ in $\Br(F)$.
    \item If $e_1(\s)$ is trivial, then $K\simeq F\times F$ and $C(A,\sigma)\simeq C^+\times C^-$ for two central simple $F$-algebras $C^+$ and $C^-$ of degree $2^{n-1}$ satisfying the relations $2[C^+]=2[C^-]=n[A]$ and $[C^+]+[C^-]=(n+1)[A]$ in $\Br(F)$.
\end{enumerate}	 
\end{thm}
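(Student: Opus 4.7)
The plan is to deduce this from the general development of Clifford algebras of algebras with orthogonal involution given in \cite[Chapter~II, \S\S 8, 9]{BOI}. The theorem is essentially a compilation of the centre description, the dimension count, and the so-called fundamental relations of Tits recorded there.

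The starting point is that $C(A,\sigma)$ is a semisimple $F$-algebra of $F$-dimension $2^{2n-1}$ whose centre is a quadratic étale $F$-algebra, and that an explicit generator of this centre has square lying in $F^{\times}$ and representing $(-1)^n \Nrd_A(a)\sq{F} = e_1(\sigma)$ for any $a\in A^{\times}\cap\Skew(A,\sigma)$; this yields the identification $K \simeq F[X]/(X^2-d)$ at once. Splitting on the square-class of $d$, one obtains Case~(1) when $d\notin\sq{F}$: then $K$ is a quadratic field extension, $C(A,\sigma)$ is central simple over $K$, and from the $F$-dimension one reads off $\dim_K C(A,\sigma) = 2^{2n-2}$, hence degree $2^{n-1}$. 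In Case~(2), $K\simeq F\times F$, and the two central idempotents decompose $C(A,\sigma)$ as a product $C^+\times C^-$ of $F$-algebras; constancy of the rank of $C(A,\sigma)$ over its centre forces both factors to have $F$-dimension $2^{2n-2}$ and hence to be central simple over $F$ of degree $2^{n-1}$ each.

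For the Brauer-class identities I would invoke the fundamental relations in \cite[\S 9]{BOI}. The standard derivation is to pass to a splitting field $E/F$ of $A$, where $\sigma_E$ is the adjoint of a non-degenerate quadratic form $\varphi$ of dimension $2n$, and there is a canonical isomorphism $C(A,\sigma)\otimes_F E\simeq C_0(\varphi)$ with the even Clifford algebra of $\varphi$. Under this isomorphism the relations $2[C(A,\sigma)]=n[A_K]$ and $[\corr_{K/F}C(A,\sigma)]=(n+1)[A]$ in Case~(1), and their specialisations $2[C^{\pm}]=n[A]$ and $[C^+]+[C^-]=(n+1)[A]$ in Case~(2) (using that $\corr_{(F\times F)/F}$ corresponds to the product in $\Br(F)$), translate to classical identities on the Brauer classes of $C_0(\varphi)$ and $C(\varphi)$, for which I would cite \cite[\S 8]{EKM08}.

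The main obstacle is the descent step: one must know that the isomorphism $C(A,\sigma)\otimes_F E \simeq C_0(\varphi)$ is compatible with centres (matching $K/F$ with the discriminant extension of $\varphi$) and with the tensor and corestriction structures on Brauer classes, so that the identities proved over $E$ descend to $\Br(F)$ and $\Br(K)$. Once these compatibilities — which are the substance of \cite[\S 8]{BOI} — are in place, the four identities in the statement follow formally from their split-case analogues.
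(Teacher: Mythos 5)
Your proposal is correct and follows the same route as the paper, which simply cites \cite[Theorem 8.10, Theorem 9.12]{BOI} for this statement; your sketch is essentially an unpacking of how those results (the centre computation, the dimension count, and Tits's fundamental relations) combine to give the theorem. Nothing further is needed.
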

\begin{proof}
    See \cite[Theorem 8.10, Theorem 9.12]{BOI}. 
\end{proof}

Let $(A,\sigma)$ be an $F$-algebra of even degree with orthogonal involution and assume that $e_1(\sigma)$ is trivial.
By \Cref{structure-clifford-awi}, we have that $C(A,\sigma)\simeq C^+\times C^-$ for two central simple $F$-algebras $C^+$ and $C^-$ with $[C^-]=[C^+]+[A]$ in $\Br(F)$. 
Note that, since $\exp A\leq 2$, the set $\{0,[A]\}$ is a subgroup of $\Br(F)$, which we denote by $\langle[A]\rangle$. 
The \emph{Clifford invariant of} $(A,\sigma)$, denoted by $e_2(\sigma)$, is defined as the class given by $[C^+]$ and $[C^-]$ in the quotient group $\Br(F)/\langle [A]\rangle$.

If $A\simeq\End_FV$ for some finite-dimensional $F$-vector space $V$ and $\sigma=\ad_\varphi$ for some non-degenerate quadratic form $\varphi$ on $V$, then $[A]=0$, $e_1(\sigma)$ is the discriminant of $\varphi$ and $e_2(\sigma)=[C(\varphi)]$ where $C(\varphi)$ is the Clifford algebra of $\varphi$;
see \cite[Proposition 7.3, Proposition 8.8]{BOI}.

By \cite[Proposition 7.3]{BOI} and \cite[Proposition 8.31]{BOI}, respectively, the invariants $e_1$ and $e_2$ are trivial for hyperbolic involutions. 
Furthermore, they commute with scalar extension.

\begin{prop}\label{iso=hyper-for-deg4+orth+trivialdisc}
    Let $Q$ and $Q'$ be $F$-quaternion algebras and let $$(A,\sigma)=(Q,\can_Q)\otimes(Q',\can_{Q'}).$$
    Then $\sigma$ is isotropic if and only if $Q$ or $Q'$ is split.
\end{prop}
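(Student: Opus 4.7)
The plan is to show that in this biquaternion setting the isotropy of $\sigma$ is equivalent to its hyperbolicity, and then to extract the splitting of $Q$ or $Q'$ from the vanishing of the Clifford invariant computed in \Cref{clifford-ort-inv-biquat}. The reverse implication is straightforward: if, say, $Q$ is split and identified with $M_2(F)$, then the matrix idempotent $e_{11}$ satisfies $e_{11}^2=e_{11}$ and $\can_Q(e_{11})=1-e_{11}$, so $(Q,\can_Q)$ is hyperbolic. Tensoring with $(Q',\can_{Q'})$ via the idempotent $e_{11}\otimes 1$ then shows that $\sigma$ is hyperbolic, hence isotropic; the case where $Q'$ is split is symmetric.

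For the forward direction, assume $\sigma$ is isotropic. I would first argue that $\sigma$ must actually be hyperbolic by analysing $\ind A$, which divides $\deg A=4$. The case $\ind A=4$ is impossible since every involution on a division algebra is anisotropic. If $A$ is split, then $\sigma=\ad_\varphi$ for some non-degenerate $4$-dimensional quadratic form $\varphi$; the Clifford algebra decomposition $C(A,\sigma)\simeq Q\times Q'$ from \Cref{clifford-ort-inv-biquat} forces, via \Cref{structure-clifford-awi}, the discriminant $e_1(\sigma)$ to be trivial, so $\varphi$ has trivial discriminant, and a standard Witt-decomposition argument (split off one hyperbolic plane; the remaining $2$-dimensional form has trivial discriminant and is therefore itself hyperbolic) shows $\varphi$, hence $\sigma$, is hyperbolic. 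If $\ind A=2$, let $D$ be the underlying quaternion division algebra; by \Cref{correspondence-(skew)hermit-involution} one has $\sigma=\ad_h$ for a non-degenerate skew-hermitian form $h$ of rank $2$ over $(D,\can_D)$, and \Cref{hermit-isot->hyperplane-subform} together with a rank comparison forces $h\simeq\hh$, so $\sigma$ is hyperbolic.

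With $\sigma$ now hyperbolic, both $e_1(\sigma)$ and $e_2(\sigma)$ vanish. By \Cref{clifford-ort-inv-biquat} the two simple components of $C(A,\sigma)$ carry Brauer classes $[Q]$ and $[Q']$; then $e_2(\sigma)=[Q]\bmod\langle[A]\rangle=0$ places $[Q]$ in $\{0,\,[Q]+[Q']\}$, forcing $[Q]=0$ or $[Q']=0$, i.e.\ $Q$ or $Q'$ is split. The main obstacle is the ``isotropic implies hyperbolic'' step, especially the split case, where one must first extract the triviality of the discriminant from the shape of the Clifford algebra before the usual Witt-theoretic reduction takes over; the index-$2$ case, by contrast, reduces cleanly to the general fact (\Cref{hermit-isot->hyperplane-subform}) that a rank-$2$ isotropic skew-hermitian form is hyperbolic.
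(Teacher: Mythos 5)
Your proposal is correct, but it follows a genuinely different route from the paper, whose entire proof consists of two citations: \cite[Proposition 2.10]{BFPQM} for ``isotropic $\Leftrightarrow$ hyperbolic'' (a special case of the theory of Pfister involutions) and \cite[Proposition 4.10]{BU18} for ``hyperbolic $\Leftrightarrow$ $Q$ or $Q'$ split''. You instead assemble both halves from material already in the paper. Your reduction of isotropy to hyperbolicity by a case split on $\ind A$ is sound: the index-$4$ case is vacuous, the index-$2$ case follows from \Cref{correspondence-(skew)hermit-involution}, \Cref{isotropic-hyperbolic-inv-equiv-cond} and \Cref{hermit-isot->hyperplane-subform} since a rank-$2$ non-degenerate isotropic form must be $\hh$, and in the split case you correctly extract the triviality of $e_1(\sigma)$ from the fact that the centre of $C(A,\sigma)\simeq Q\times Q'$ is $F\times F$ (via \Cref{structure-clifford-awi}) before running the Witt decomposition. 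The forward implication then follows cleanly from the vanishing of $e_2$ for hyperbolic involutions together with \Cref{clifford-ort-inv-biquat} and $[A]=[Q]+[Q']$ in $\Br(F)$, and your converse via the idempotent $e_{11}\otimes 1$ with $\can_Q(e_{11})=\Trd(e_{11})-e_{11}=1-e_{11}$ is an explicit verification of hyperbolicity. What your approach buys is self-containedness: every step rests on statements quoted earlier in the paper rather than on two external results. What the paper's citations buy is brevity and, in the case of \cite{BFPQM}, a statement (isotropy implies hyperbolicity) valid for all Pfister involutions, not just in degree $4$.
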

\begin{proof}
    By \cite[Proposition 2.10]{BFPQM}, $\s$ is isotropic if and only if it is hyperbolic.
    By \cite[Proposition 4.10]{BU18}, $\sigma$ is hyperbolic if and only if $Q$ or $Q'$ is split. 
\end{proof}

\begin{cor}\label{quad-ext-hyper->trivial-disc}
    Let $(A,\sigma)$ be an $F$-algebra with orthogonal involution such that $\deg A=4$. 
    Let $K/F$ be a quadratic field extension. Assume that $(A,\sigma)_K$ is hyperbolic. 
    Then $e_1(\sigma)$ is trivial or $A$ is split.
\end{cor}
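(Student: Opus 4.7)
The plan is to argue by contrapositive: assuming $e_1(\s)$ is non-trivial, I will deduce that $A$ must be split. The key tools are \Cref{structure-clifford-awi}, which in degree $4$ (i.e.\ $n=2$) makes the Clifford algebra of $(A,\s)$ a quaternion algebra over the quadratic étale extension determined by $e_1(\s)$; the vanishing of $e_2$ on hyperbolic involutions; and the projection formula $\corr_{K/F}\circ\res_{K/F}=[K:F]\cdot\id$ on the Brauer group.

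First I would fix $d\in F^\times$ with $e_1(\s)=dF^{\times 2}$. Hyperbolicity of $(A,\s)_K$ forces $e_1(\s_K)$ to be trivial, so $d\in K^{\times 2}$; since $e_1(\s)$ is assumed non-trivial and $[K:F]=2$, this gives $K=F(\sqrt{d}\,)$. Writing $C=C(A,\s)$, \Cref{structure-clifford-awi}$(1)$ then shows that $C$ is a $K$-quaternion algebra with centre $K$ and that $[\corr_{K/F}C]=3[A]=[A]$ in $\Br(F)$, using $\exp A\leq 2$. It therefore suffices to prove that $[\corr_{K/F}C]=0$, i.e.\ to control $[C]$ in $\Br(K)$.

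To this end I would use that Clifford algebra formation commutes with scalar extension. Since the centre $K$ of $C$ becomes $K\otimes_FK\simeq K\times K$ after base change to $K$, the decomposition $C((A,\s)_K)\simeq C^+\times C^-$ from \Cref{structure-clifford-awi}$(2)$ applied to $(A,\s)_K$ identifies the components $C^+$ and $C^-$ with $C$ and its $\Gal(K/F)$-conjugate, viewed as $K$-algebras through the two embeddings of $K$ into $K\otimes_FK$. Hyperbolicity of $\s_K$ yields $e_2(\s_K)=0$, whence $[C^+]\in\langle[A_K]\rangle=\{0,[A_K]\}$ in $\Br(K)$, and therefore $[C]\in\{0,[A_K]\}$.

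To conclude, in the case $[C]=0$ the corestriction $\corr_{K/F}C$ is split, so $[A]=0$; in the case $[C]=[A_K]$, the projection formula gives $[\corr_{K/F}C]=[\corr_{K/F}\res_{K/F}A]=2[A]=0$, again using $\exp A\leq 2$. Either way $A$ is split. The most delicate step will be the precise identification of the two simple components of $C\otimes_FK$ with the pair $(C^+,C^-)$ coming from the Clifford algebra of $(A,\s)_K$; beyond that, the argument reduces to a short manipulation in the Brauer groups of $F$ and $K$.
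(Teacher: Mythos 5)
Your argument is correct, but it follows a genuinely different route from the paper's. The paper invokes the explicit degree-$4$ correspondence of \cite[Theorem 15.7]{BOI}: for non-trivial discriminant $d\sq{F}$ and $K=F(\sqrt{d})$ one has $C(A,\sigma)\simeq Q$ for a $K$-quaternion algebra $Q$ with $\corr_{K/F}Q\simeq A$ and $(A,\sigma)_K\simeq(Q,\can_Q)\otimes_K({}^{\iota}Q,\can_{{}^{\iota}Q})$; hyperbolicity of $\sigma_K$ together with \Cref{iso=hyper-for-deg4+orth+trivialdisc} then forces $Q$ or ${}^{\iota}Q$ to split, and applying $\corr_{K/F}$ splits $A$. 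You instead stay entirely at the level of Brauer classes: the general structure theorem (\Cref{structure-clifford-awi}), base-change compatibility of the Clifford algebra, the vanishing of $e_2$ on hyperbolic involutions, and restriction--corestriction. All the ingredients you use are legitimate: the identification of the two simple components of $C\otimes_FK\simeq C(A_K,\sigma_K)$ with $C$ and ${}^{\iota}C$, which you rightly flag as the delicate point, is the standard decomposition of a central simple $K$-algebra along the two idempotents of $K\otimes_FK\simeq K\times K$, and it gives $[C]\in\{[C^+],[C^-]\}\subseteq\{0,[A_K]\}$; combined with $[\corr_{K/F}C]=3[A]=[A]$ and $\corr_{K/F}\circ\res_{K/F}=2\cdot\id$, this yields $[A]=0$ in both cases. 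What each approach buys: the paper's is shorter given that \Cref{iso=hyper-for-deg4+orth+trivialdisc} is already available and it produces the finer structural statement about $Q$; yours avoids \cite[Theorem 15.7]{BOI} and the isotropy criterion altogether, and the same Brauer-class computation in fact extends verbatim to any degree $\equiv 0 \bmod 4$ (where $(n+1)[A]=[A]$), not just degree $4$.
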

\begin{proof}
    Let $a\in\mg{F}$ be such that $K\simeq_F F(\sqrt{a})$.
    Since $\sigma_K$ is hyperbolic, $e_1(\sigma_K)$ is trivial.
    It follows that either $e_1(\sigma)=\sq{F}$ or $e_1(\sigma)=a\sq{F}$ in $F^{\times}/F^{\times 2}$.
    Suppose now that $e_1(\sigma)=a\sq{F}$.  
    It follows by \cite[Theorem 15.7]{BOI} that there exists a $K$-quaternion algebra $Q$ such that $C(A,\sigma)\simeq Q$, $\corr_{K/F}Q\simeq A$ and 
    $(A,\sigma)_K\simeq(Q,\can_{Q})\otimes_K (^{\iota}Q,\can_{^{\iota}Q})$ where $\iota$ is the non-trivial $F$-automorphism of $K$ and ${}^{\iota}Q$ denotes the $K$-quaternion algebra obtained by letting $K$ act on $Q$ via $\iota$.
    Since $\sigma_K$ is hyperbolic, by \Cref{iso=hyper-for-deg4+orth+trivialdisc}, either $Q$ or $^{\iota}Q$ is split.
    As $\corr_{K/F}Q\simeq \corr_{K/F} (^{\iota}Q)\simeq A$, we conclude that $A$ is split. 
\end{proof}

\begin{prop}\label{ort-sum-rel-e_1-e_2}
    Let $(A_1,\sigma_1)$, $(A_2,\sigma_2)$ and $(A,\s)$ be $F$-algebras with orthogonal involution such that $(A,\sigma)\in(A_1,\sigma_1)\boxplus(A_2,\sigma_2)$. 
    Then the following hold:
\begin{enumerate}[$(a)$]
    \item $\deg A=\deg A_1+\deg A_2$.
    \item If $\deg A_1\equiv\deg A_2\equiv0\bmod 2$, then $e_1(\sigma)=e_1(\sigma_1)\cdot e_1(\sigma_2)$ in $F^{\times}/F^{\times2}$.
    \item If $\deg A_1\equiv\deg A_2\equiv0\bmod 2$ and both $e_1(\sigma_1)$ and $e_1(\sigma_2)$ are trivial, then $e_2(\sigma)=e_2(\sigma_1)+e_2(\sigma_2)$ in $\Br(F)/\langle[A]\rangle$.
\end{enumerate}
\end{prop}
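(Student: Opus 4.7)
The plan is to use the definition of $\boxplus$ to represent everything via $\varepsilon$-hermitian forms over a common division algebra with involution: fix $(D,\gamma)$, a sign $\varepsilon\in\{\pm 1\}$, finitely generated $D$-right modules $V_1,V_2$, and non-degenerate $\varepsilon$-hermitian forms $h_i:V_i\times V_i\to D$ with $(A_i,\sigma_i)\simeq\Ad_D(h_i)$ for $i=1,2$ and $(A,\sigma)\simeq\Ad_D(h_1\perp h_2)$. Part (a) then follows immediately from $\rk(h_1\perp h_2)=\rk(h_1)+\rk(h_2)$ and the identity $\deg\End_D V=\rk V\cdot\deg D$ applied to $V_1$, $V_2$, and $V_1\oplus V_2$.

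For part (b), I would exploit the block-diagonal embedding $\End_D V_1\times\End_D V_2\hookrightarrow\End_D(V_1\oplus V_2)$, that is $A_1\times A_2\hookrightarrow A$, under which $\sigma$ restricts to $\sigma_1\times\sigma_2$. Picking invertible skew elements $a_i\in A_i^{\times}\cap\Skew(A_i,\sigma_i)$, the pair $(a_1,a_2)$ is invertible and skew for $\sigma$. After extending scalars to a common splitting field of $A$ (where reduced norms become determinants), a block-diagonal matrix has determinant equal to the product of the block determinants, yielding $\Nrd_A(a_1,a_2)=\Nrd_{A_1}(a_1)\cdot\Nrd_{A_2}(a_2)$. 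Writing $\deg A_i=2n_i$ and $\deg A=2(n_1+n_2)$, the definition of the discriminant combined with the identity $(-1)^{n_1+n_2}=(-1)^{n_1}(-1)^{n_2}$ gives the desired relation.

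For part (c), the hard part, my plan is to pass to a generic splitting field of $A$ and invoke the classical Clifford-invariant formula for quadratic forms. Let $L=F(SB(A))$. Since $A\sim A_1\sim A_2$, all three algebras become split over $L$; hence after Morita equivalence the involutions $\sigma_L,\sigma_{1,L},\sigma_{2,L}$ are adjoint to non-degenerate quadratic forms $\varphi,\varphi_1,\varphi_2$ over $L$ with $\varphi$ proportional to $\varphi_1\perp\varphi_2$. Since $e_1$ and $e_2$ commute with scalar extension, the discriminants of $\varphi_1$ and $\varphi_2$ over $L$ remain trivial. The classical formula for the Clifford invariant of an orthogonal sum of quadratic forms (see e.g.\ \cite{EKM08}) reduces to the additive relation $e_2(\sigma_L)=e_2(\sigma_{1,L})+e_2(\sigma_{2,L})$ in $\Br(L)$ precisely because the cross-term is expressed via the two discriminants, which are trivial by assumption. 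The descent back to $F$ is the main obstacle: I would invoke Amitsur's theorem, which identifies the kernel of the restriction map $\Br(F)\to\Br(L)$ with the cyclic subgroup generated by $[A]$, and this subgroup equals $\langle[A]\rangle$ since $\exp A\leq 2$. Consequently, the induced homomorphism $\Br(F)/\langle[A]\rangle\to\Br(L)$ is injective, and the equality in $\Br(L)$ descends to the required equality in $\Br(F)/\langle[A]\rangle$.
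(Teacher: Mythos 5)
Your argument is correct. Note that the paper does not prove this proposition itself but simply cites \cite[Proposition 3.1]{QMT15}; your write-up essentially reconstructs that argument. Parts $(a)$ and $(b)$ are exactly the standard computation (the block-diagonal subalgebra $A_1\times A_2\hookrightarrow A$ is stable under $\sigma=\ad_{h_1\perp h_2}$, and the reduced norm of a block-diagonal element is the product of the blocks' reduced norms). For $(c)$, the passage to $L=F(\mathrm{SB}(A))$, the classical additivity of the Clifford invariant for even-dimensional quadratic forms of trivial discriminant, and descent via Amitsur's identification of $\Ker(\Br(F)\to\Br(L))$ with $\langle[A]\rangle$ is the standard route. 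Two small points worth making explicit: the Morita transfer over $L$ only determines the quadratic forms up to a similarity factor, so you should observe that $e_2(\lambda\psi)=e_2(\psi)$ for $\psi$ of even dimension and trivial discriminant (i.e.\ $\psi\in\I^2L$), and that $[A_1]=[A_2]=[A]$ in $\Br(F)$, so that all three classes $e_2(\sigma),e_2(\sigma_1),e_2(\sigma_2)$ indeed live in the same quotient $\Br(F)/\langle[A]\rangle$ before you compare them.
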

\begin{proof}
    See \cite[Proposition 3.1]{QMT15}.
\end{proof}

\begin{lem}\label{comput-disc-orth-product-of-quaternions}
    Let $Q$ be an $F$-quaternion division algebra. 
    Let $n\in\nat^+$ and $\alpha_1,\ldots,\alpha_n\in\Skew(Q,\can_Q)\setminus\{0\}$. 
    Over $(Q,\can_Q)$, consider the skew-hermitian form $h=\langle\alpha_1,\ldots,\alpha_n\rangle$.
    Then $\Ad_Q(h)$ is an $F$-algebra with orthogonal involution such that $e_1(\ad_{h})=(-1)^n\Nrd_Q(\alpha_1\cdots\alpha_n)\sq{F}$.
\end{lem}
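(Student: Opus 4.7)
My plan is to argue by induction on $n$, reducing to the one-dimensional case and then gluing via \Cref{ort-sum-rel-e_1-e_2}. Before starting the induction, I will record the easy observation that $\ad_h$ is orthogonal: since $\can_Q$ is symplectic and $h$ is skew-hermitian, \Cref{correspondence-(skew)hermit-involution} forces $\ad_h$ to have type different from $\can_Q$, hence orthogonal; moreover $\deg \Ad_Q(h) = 2n$, so the discriminant formula from the definition before \Cref{L2} applies with sign $(-1)^n$.

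For the base case $n=1$, I will invoke \Cref{ad-inv-1-dim-hermit} to identify $\Ad_Q(\langle\alpha_1\rangle)$ with $(Q,\Int(\alpha_1^{-1})\circ\can_Q)$. To compute $e_1$ I need a nonzero skew element: the candidate is $\alpha_1$ itself. Using $\can_Q(\alpha_1)=-\alpha_1$ (since $\alpha_1\in\Skew(Q,\can_Q)$), a one-line calculation gives $\Int(\alpha_1^{-1})(\can_Q(\alpha_1))=\alpha_1^{-1}(-\alpha_1)\alpha_1=-\alpha_1$, so $\alpha_1\in\Skew(Q,\ad_{\langle\alpha_1\rangle})\setminus\{0\}$. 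By definition of $e_1$ this yields
\[
    e_1(\ad_{\langle\alpha_1\rangle})=(-1)^1\Nrd_Q(\alpha_1)\sq{F}=-\Nrd_Q(\alpha_1)\sq{F}.
\]

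For the inductive step, assume the formula for $n-1$ and write $h=h'\perp\langle\alpha_n\rangle$ with $h'=\langle\alpha_1,\ldots,\alpha_{n-1}\rangle$. Then
\[
    \Ad_Q(h)\in\Ad_Q(h')\boxplus\Ad_Q(\langle\alpha_n\rangle).
\]
The two summands have even degrees $2(n-1)$ and $2$, so \Cref{ort-sum-rel-e_1-e_2}(b) applies and yields
\[
    e_1(\ad_h)=e_1(\ad_{h'})\cdot e_1(\ad_{\langle\alpha_n\rangle}).
\]
Substituting the induction hypothesis and the base case, together with multiplicativity of $\Nrd_Q$, I will obtain
\[
    e_1(\ad_h)=\bigl((-1)^{n-1}\Nrd_Q(\alpha_1\cdots\alpha_{n-1})\bigr)\bigl(-\Nrd_Q(\alpha_n)\bigr)\sq{F}=(-1)^n\Nrd_Q(\alpha_1\cdots\alpha_n)\sq{F},
\]
completing the induction.

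I do not anticipate any serious obstacle; the only nontrivial input is the identification $\Ad_Q(\langle\alpha_1\rangle)\simeq(Q,\Int(\alpha_1^{-1})\circ\can_Q)$ from \Cref{ad-inv-1-dim-hermit} and the fact that $\alpha_1$ remains skew under the twisted involution, both of which are immediate. Everything else is bookkeeping via \Cref{ort-sum-rel-e_1-e_2}(b), which is why the even-degree hypothesis in that proposition is crucial: it holds at every stage of the induction since each partial sum $\Ad_Q(\langle\alpha_1,\ldots,\alpha_k\rangle)$ has degree $2k$.
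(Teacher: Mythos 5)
Your proof is correct and follows essentially the same route as the paper's: the base case via $\Ad_Q(\langle\alpha_1\rangle)\simeq(Q,\Int(\alpha_1^{-1})\circ\can_Q)$ with $\alpha_1$ as the skew element, followed by induction using \Cref{ort-sum-rel-e_1-e_2}~$(b)$ and multiplicativity of the reduced norm. No issues.
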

\begin{proof}
    Set  $\sigma=\Int(\alpha_1^{-1})\circ\can_Q$. 
    Then $\sigma$ is an orthogonal involution on $Q$ and $\Ad_Q(\langle\alpha_1\rangle)\simeq(Q,\sigma)$.
    Since $\sigma(\alpha_1)=-\alpha_1$, we have $e_1(\sigma)= -\Nrd_Q(\alpha_1)\sq{F}$.
    This shows the statement for $n=1$.
    As $\Ad_Q(h)\in\Ad_Q(\langle\alpha_1\rangle)\boxplus\ldots\boxplus\Ad_Q(\langle\alpha_n\rangle)$, the general case follows by induction on $n$ using \Cref{ort-sum-rel-e_1-e_2}~$(b)$.
\end{proof}

\begin{lem}\label{exists-orthogonal-given-disc-I^3=0} 
    Let $Q$ be an $F$-quaternion division algebra. Let $d=\Nrd_Q(x)$ for some $x\in\mg{Q}$.
    For any $n\in\nat $ with $n\geq2$, there exists a non-degenerate skew-hermitian form $h$ over $(Q,\can_Q)$ of rank $n$ such that $e_1(\ad_{h})=(-1)^nd F^{\times2}$.
\end{lem}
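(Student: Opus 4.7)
The plan is to exhibit $h$ as a diagonal skew-hermitian form $\langle\alpha_1,\ldots,\alpha_n\rangle$ with $\alpha_i\in\Skew(Q,\can_Q)\setminus\{0\}$; by \Cref{comput-disc-orth-product-of-quaternions} and multiplicativity of the reduced norm it then suffices to arrange
\[
\Nrd_Q(\alpha_1)\cdots\Nrd_Q(\alpha_n)\equiv d\pmod{F^{\times 2}}.
\]

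The crucial intermediate statement is the following factorisation: every $x\in Q^{\times}\setminus F$ can be written as $x=\alpha\beta$ with $\alpha,\beta\in\Skew(Q,\can_Q)\setminus\{0\}$. To prove it, I would decompose $x=u+v$ with $u\in F$ and $v\in\Skew(Q,\can_Q)$, necessarily $v\ne 0$ since $x\notin F$. Because $Q$ is a division algebra, the restriction of $\Nrd_Q$ to the $3$-dimensional $F$-space $\Skew(Q,\can_Q)$ is anisotropic, and its associated polar form is non-degenerate; two nonzero pure quaternions anticommute exactly when they are orthogonal with respect to this polar form. Hence the orthogonal complement of $Fv$ in $\Skew(Q,\can_Q)$ is a $2$-dimensional subspace, in which we select any nonzero $\alpha$. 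Setting $\beta=\alpha^{-1}x$ and using that $\alpha^{-1}=-\alpha/\Nrd_Q(\alpha)$ is again pure, together with the anticommutativity $\alpha v+v\alpha=0$, a short trace computation gives $\can_Q(\beta)=-\beta$, so $\beta\in\Skew(Q,\can_Q)\setminus\{0\}$.

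With this factorisation in hand, the construction splits into two base cases and a doubling step. For $n=2$: if $d\in F^{\times 2}$ we set $\alpha_1=\alpha_2$ to be any nonzero element of $\Skew(Q,\can_Q)$, since $\Nrd_Q(\alpha_1)^2\in F^{\times 2}$; otherwise every $x$ with $\Nrd_Q(x)=d$ lies outside $F$ and factors as $x=\alpha_1\alpha_2$. For $n=3$: writing $Q=(a,b)_F$ with standard generators $i,j$, when $d\in F^{\times 2}$ the choice $(\alpha_1,\alpha_2,\alpha_3)=(i,j,ij)$ gives $\Nrd_Q(i)\Nrd_Q(j)\Nrd_Q(ij)=(-a)(-b)(ab)\in F^{\times 2}$; when $d\notin F^{\times 2}$ we first factor $x=\gamma\beta$ and then apply the factorisation to the non-scalar pure quaternion $\gamma=\alpha_1\alpha_2$, setting $\alpha_3=\beta$. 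Finally, a rank $n$ solution is produced from a rank $n-2$ solution by appending $\langle\alpha,\alpha\rangle$ for any nonzero pure $\alpha$, which multiplies the product of reduced norms by $\Nrd_Q(\alpha)^2\in F^{\times 2}$ while the sign $(-1)^n$ is unchanged.

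The main obstacle is the factorisation lemma, and specifically the step that produces a nonzero pure quaternion anticommuting with $v$; this relies essentially on the hypothesis that $Q$ is a division algebra. Everything else is routine bookkeeping using \Cref{comput-disc-orth-product-of-quaternions}.
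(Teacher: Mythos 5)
Your proof is correct and follows essentially the same strategy as the paper: take $h$ diagonal with entries in $\Skew(Q,\can_Q)\setminus\{0\}$ whose reduced norms multiply to $d$ modulo squares, and conclude via \Cref{comput-disc-orth-product-of-quaternions}. The only divergence is in the factorisation step: the paper notes that $Q'=\Skew(Q,\can_Q)$ and $Q'x$ are both $3$-dimensional subspaces of the $4$-dimensional space $Q$, hence meet nontrivially, which yields $x=\alpha^{-1}\beta$ with $\alpha,\beta$ pure for \emph{every} $x\in Q^{\times}$ (scalars included) and thereby avoids your case distinction on $d\in F^{\times2}$ and the separate $n=2,3$ base cases; your anticommuting-complement argument is a valid, slightly longer substitute.
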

\begin{proof}	
    Write $Q=F\oplus Q'$ where $Q'=\Skew(Q,\can_Q)$. As $Q'x$ is a $3$-dimensional $F$-subspace of $Q$, we have that $Q'\cap Q'x\neq\{0\}$. 
    Hence, there exist $\alpha,\beta\in Q'\setminus\{0\}$ such that $\beta=\alpha x$, whence $\alpha^{-1}\beta=x$.
    Along the same lines, using that $n\geq2$, we can find $\alpha_1,\ldots,\alpha_n\in Q'$ such that $x=\alpha_1\cdots\alpha_n$.
    Set $h=\langle\alpha_1,\ldots,\alpha_n\rangle$. 
    Then $h$ is a non-degenerate skew-hermitian form over $(Q,\can_Q)$ of rank $n$, and by \Cref{comput-disc-orth-product-of-quaternions}, we have $e_1(\ad_{h})=(-1)^n\Nrd_Q(\alpha_1\cdots\alpha_n) F^{\times2}=(-1)^nd F^{\times2}$.
    This concludes the proof.
\end{proof}

In small degrees, the discriminant is useful for determining whether an algebra with involution is decomposable, or isotropic.
\begin{prop}\label{app-disc-deg2-4-6}
    Let $(A,\sigma)$ be an $F$-algebra with orthogonal involution. 
\begin{enumerate}[$(a)$]
    \item If $\deg A=4$, then $e_1(\sigma)$ is trivial if and only if there exist $F$-quaternion algebras $Q$ and $Q'$ such that $(A,\sigma)\simeq(Q,\can_Q)\otimes(Q',\can_{Q'})$.
    \item If $\deg A=6$, $A$ is not split and $e_1(\sigma)$ is trivial, then $(A,\sigma)$ is anisotropic.
\end{enumerate}	
\end{prop}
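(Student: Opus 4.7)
I would treat the two parts separately. For part~(a), the ``if'' direction is a direct reduced-norm computation: given $(A,\sigma) \simeq (Q,\can_Q) \otimes (Q',\can_{Q'})$, pick a pure quaternion $i \in Q$ with $i^2 \in \mg{F}$; then $i\otimes 1 \in \mg{A} \cap \Skew(A,\sigma)$ and $\Nrd_A(i\otimes 1) = (i^2)^2 \in \sq{F}$, so $e_1(\sigma) = \sq{F}$ is trivial. The ``only if'' direction is the substantial part. By \Cref{structure-clifford-awi}, triviality of $e_1(\sigma)$ gives $C(A,\sigma) \simeq C^+ \times C^-$ with $C^{\pm}$ central simple $F$-algebras of degree~$2$, i.e.~$F$-quaternion algebras. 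The classical structure theorem for orthogonal involutions on central simple algebras of degree~$4$ with trivial discriminant (\cite[\S15]{BOI}) then yields $(A,\sigma) \simeq (C^+,\can_{C^+}) \otimes (C^-,\can_{C^-})$. This step is where I expect to rely directly on an existing result from~\cite{BOI}.

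For part~(b), I would argue by contradiction. Since $A$ admits an $F$-involution, $\exp A \leq 2$, whence $\ind A$ is a power of~$2$; as $\ind A \mid \deg A = 6$ and $A$ is not split, this forces $\ind A = 2$. Write $A \simeq \End_Q V$ for an $F$-quaternion division algebra $Q$ and a $3$-dimensional $Q$-right vector space~$V$. By \Cref{correspondence-(skew)hermit-involution} and \Cref{isotropic-hyperbolic-inv-equiv-cond}, the assumption that $\sigma$ is isotropic means $\sigma = \ad_h$ for a non-degenerate isotropic skew-hermitian form $h$ of rank~$3$ over $(Q,\can_Q)$.

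Using \Cref{hermit-isot->hyperplane-subform}, I can decompose $h \simeq \hh \perp \langle \alpha \rangle$ for some $\alpha \in \Skew(Q,\can_Q) \setminus \{0\}$. Picking any $\beta \in \Skew(Q,\can_Q) \setminus \{0\}$, I would identify $\hh \simeq \langle \beta, -\beta \rangle$ (this rank-$2$ form is non-degenerate and isotropic on the diagonal, hence hyperbolic). Then $h \simeq \langle \beta, -\beta, \alpha \rangle$, and \Cref{comput-disc-orth-product-of-quaternions} gives
\[ e_1(\ad_h) \,=\, -\Nrd_Q(-\beta^2 \alpha) \sq{F} \,=\, -\Nrd_Q(\alpha) \sq{F}, \]
since $-\beta^2 \in \mg{F}$ so $\Nrd_Q(-\beta^2) = (-\beta^2)^2 \in \sq{F}$. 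Triviality of $e_1(\sigma)$ therefore forces $-\Nrd_Q(\alpha) \in \sq{F}$, and since $h$ represents $\alpha \in \mg{Q}$, \Cref{1-dim skew-herm-over-quaternion} forces $Q$ to be split, contradicting the hypothesis that $A$ is not split. The main potential stumbling block is the bookkeeping in the discriminant computation and invoking the right existing result in~(a); beyond that, the argument is a straightforward chain of results already established in the preceding sections.
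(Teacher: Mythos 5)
Your proposal is correct and follows essentially the same route as the paper: for part~(a) the paper simply cites \cite[Theorem 3.1]{KPS91} and \cite[Corollary 15.12]{BOI}, which is exactly the structure theorem your ``only if'' step defers to, and for part~(b) the paper runs the identical computation (reduce to $\ind A=2$, write $h\simeq\hh\perp\langle\alpha\rangle$, show $e_1(\sigma)=-\Nrd_Q(\alpha)\sq{F}$, and invoke \Cref{1-dim skew-herm-over-quaternion}), merely phrased as a contraposition rather than a contradiction. Your explicit identification $\hh\simeq\langle\beta,-\beta\rangle$ followed by \Cref{comput-disc-orth-product-of-quaternions} is an equally valid substitute for the paper's use of \Cref{ort-sum-rel-e_1-e_2}~$(b)$.
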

\begin{proof}
    $(a)$ See \cite[Theorem 3.1]{KPS91} and \cite[Corollary 15.12]{BOI} for two different proofs of this fact.

    $(b)$ Assume that $\deg A=6$ and $A$ is not split. Since $A$ carries an orthogonal involution, we have $\exp A\leq2$. 
    Since $\ind A$ divides $\deg A$ and $A$ is not split, we obtain that $\ind A=2$. Hence $A\sim Q$ for some $F$-quaternion division algebra $Q$. 
    Thus $(A,\sigma)\simeq\Ad_Q(h)$ for a non-degenerate skew-hermitian form $h$ over $(Q,\can_Q)$ with $\rk h=3$. 
    Assume that $(A,\sigma)$ is isotropic. Then $h\simeq\hh\perp\langle \alpha\rangle$ for a pure quaternion $\alpha\in Q^{\times}$. 
    As $Q$ is not split, it follows by \Cref{1-dim skew-herm-over-quaternion} that $-\Nrd(\alpha)\notin F^{\times2}$.
    Using \Cref{ort-sum-rel-e_1-e_2}~$(b)$ and \Cref{comput-disc-orth-product-of-quaternions} we compute that $e_1(\sigma)=e_1(\ad_{\langle\alpha\rangle})=-\Nrd(\alpha) F^{\times2}$. 
    Thus $e_1(\sigma)$ is non-trivial.
\end{proof}   

We now revisit some known facts on quadratic forms and algebras with involution over fields with $\I^3=0$. The following result was obtained by Elman and Lam in \cite[Theorem 3.11]{EL73c}.

\begin{prop}\label{EL:I3=0-classification}
    Non-degenerate quadratic forms over $F$ are classified up to isometry by their dimension, discriminant and Clifford invariant if and only if $\I^3F=0$.
    In particular, if $\I^3 F=0$, then a non-degenerate quadratic form over $F$ is hyperbolic if and only if it is even-dimensional, has trivial discriminant and trivial Clifford invariant.
\end{prop}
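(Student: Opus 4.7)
The proof proceeds via the standard filtration $W(F)\supset\I F\supset\I^2 F\supset\I^3 F$ of the Witt ring, together with the three cohomological invariants $e_0=\dim\bmod 2$, the discriminant $e_1$ on $\I F$, and the Clifford invariant $e_2$ on $\I^2 F$. The main tools are Pfister's identification $\I F/\I^2 F\simeq F^\times/\sq F$ via $e_1$ and the identification $\ker\bigl(e_2|_{\I^2 F}\bigr)=\I^3 F$.

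For the ``if'' direction, assume $\I^3 F=0$. Let $\varphi,\psi$ be non-degenerate forms with equal dimension, discriminant and Clifford invariant, and set $\tau=[\varphi]-[\psi]\in W(F)$. Equality of dimensions gives $\tau\in\I F$; Pfister's theorem combined with equality of discriminants then gives $\tau\in\I^2 F$; and equality of Clifford invariants (which becomes additive on $\I^2 F$) finally gives $e_2(\tau)=0$, hence $\tau\in\I^3 F=0$. So $[\varphi]=[\psi]$ in $W(F)$, and Witt cancellation applied to $\dim\varphi=\dim\psi$ produces $\varphi\simeq\psi$.

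For the ``only if'' direction, assume the classification holds, and let $\tau$ be an anisotropic form with $[\tau]\in\I^3 F\subseteq\I^2 F$. Then $\dim\tau=2m$ is even, $\tau$ has trivial discriminant, and $\tau$ has trivial Clifford invariant; the last point uses only the classical (pre-Merkurjev) direction $\I^3 F\subseteq\ker(e_2)$, which reduces to the easy calculation that $3$-fold Pfister forms have trivial Clifford invariant. The hyperbolic form $m\hh$ shares these three invariants with $\tau$, so by hypothesis $\tau\simeq m\hh$; anisotropy then forces $\tau=0$, so $\I^3 F=0$. The ``in particular'' assertion follows by applying the classification to $\varphi$ and $m\hh$ whenever $\varphi$ is non-degenerate of even dimension $2m$ with trivial discriminant and Clifford invariant; the converse (that hyperbolic forms have these trivial invariants) is standard.

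The main obstacle is the containment $\ker(e_2|_{\I^2 F})\subseteq\I^3 F$, which is the $e_2$-part of Merkurjev's theorem on the $2$-torsion of the Brauer group. If one wants a self-contained argument, the original Elman--Lam approach in \cite{EL73c} side-steps Merkurjev by using the hypothesis $\I^3 F=0$ directly: under this assumption, $\I^2 F$ is controlled by sums and differences of quaternion algebra classes, and an elementary computation with $2$-fold Pfister forms yields the required injectivity of $e_2$ on $\I^2 F$.
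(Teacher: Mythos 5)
Your overall architecture is the standard one, and it is worth noting that the paper itself offers no argument for this statement beyond the citation \cite[Theorem 35.10]{EKM08}, so any genuine proof necessarily says more than the paper does. Your reduction of the ``if'' direction to the containment $\Ker(e_2|_{\I^2F})\subseteq\I^3F$, via the filtration and the invariants $e_0,e_1,e_2$, is correct, as is the ``only if'' direction, which compares an anisotropic form with class in $\I^3F$ against the hyperbolic form of the same dimension and uses only the elementary containment $\I^3F\subseteq\Ker(e_2)$. The ``in particular'' clause is handled correctly as well.

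The weak point is your closing paragraph. The containment $\Ker(e_2|_{\I^2F})\subseteq\I^3F$ is precisely the injectivity half of Merkurjev's theorem, and the claim that, under the hypothesis $\I^3F=0$, it follows from ``an elementary computation with $2$-fold Pfister forms'' is not substantiated. The natural such computation writes a class in $\I^2F$ as a sum of $2$-fold Pfister classes and combines them pairwise via common slots; this requires the corresponding quaternion algebras to be pairwise linked, and fields with $\I^3F=0$ need not be linked: the paper itself produces biquaternion division algebras over fields with $u=6$ (\Cref{P:u6-biquatexist}), and Merkurjev's construction yields fields with $\I^3=0$ of arbitrarily large $u$-invariant. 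One can push the elementary argument somewhat further (a sum of two unlinked quaternion classes is, modulo $\I^3F$, represented by the $6$-dimensional Albert form of the associated biquaternion division algebra, and Jacobson's theorem controls such forms up to similarity, which $\I^3F=0$ upgrades to Witt equivalence), but this visibly handles only sums of few quaternion classes and does not close the induction in general. The clean way to finish is to take the injectivity of $e_2:\I^2F/\I^3F\to\Br_2(F)$ as an external input (Merkurjev, or the cited \cite[Theorem 35.10]{EKM08} wholesale); with that substitution for your last paragraph, the proof is complete.
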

\begin{proof}
    See e.g. \cite[Theorem 35.10]{EKM08}.
\end{proof}

We denote by $\Br_2(F)$ the $2$-torsion subgroup of the Brauer group $\Br(F)$. Note that $[Q]\in\Br_2(F)$ for any $F$-quaternion algebra $Q$.
Merkurjev showed in \cite{Mer81} that the Clifford invariant induces a natural isomorphism 
$$\I^2 F/\I^3 F\to \Br_2(F)\,.$$ The injectivity of this homomorphism can be seen as an extension of \Cref{EL:I3=0-classification}.
The surjectivity on $\Br_2(F)$ essentially says that $\Br_2(F)$ is generated by the classes of $F$-quaternion algebras.
This very deep theorem turns out to have a far more elementary proof for fields with $\I^3=0$.
Since we will use it in this context, we take the occasion of pointing this out. 
It derives in a relatively straightforward way from \Cref{EL:I3=0-classification}, however using the fact that any element of $\Br_2(F)$ is split by some $2$-extension.
An elementary proof of this latter fact -- independent of Merkurjev's Theorem, from which it follows immediately -- was given in \cite{Bec16}.

\begin{lem}\label{L:I30-Merk}
    Assume that $\I^3F=0$. 
    Let $K/F$ be a quadratic field extension and let $\alpha\in\Br_2(F)$.
    Assume that $m\in\nat$ is such that $\alpha_{K}$ is the class of a tensor product of $m$\, $K$-quaternion algebras.
    Then $\alpha$ is the class of a tensor product of $m+1$\, $F$-quaternion algebras.
\end{lem}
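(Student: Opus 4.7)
The plan is to reduce the lemma to a claim about lifting the $K$-decomposition of $\alpha_K$ to an $F$-decomposition, and then to prove this claim using Galois-theoretic manipulations and \Cref{EL:I3=0-classification}. It suffices to exhibit $\beta \in \Br_2(F)$ that is the class of a tensor product of $m$ $F$-quaternion algebras and satisfies $\beta_K = \alpha_K$. Granting this, the difference $\alpha - \beta$ is split by $K$; writing $K \simeq_F F(\sqrt{d})$, Albert's classical theorem asserts that any $2$-torsion class in $\Br(F)$ split by $F(\sqrt{d})$ is the class of a single $F$-quaternion algebra $(d, c)_F$ for some $c \in F^\times$. Consequently $\alpha = \beta + [(d, c)_F]$ is a tensor product of exactly $m + 1$ $F$-quaternion classes. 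This reduction uses only Albert's theorem and is independent of the hypothesis $\I^3 F = 0$.

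For the lifting claim, write $\alpha_K = \sum_{i=1}^m [(a_i, b_i)_K]$ with $a_i, b_i \in K^\times$ and let $\iota$ denote the nontrivial $F$-automorphism of $K$. Since $\alpha_K$ comes from $F$, it is fixed by $\iota$, yielding the identity $\sum_i [(a_i, b_i)_K] = \sum_i [(\iota(a_i), \iota(b_i))_K]$ in $\Br_2(K)$. Using the bimultiplicativity identity $[(a, b)_K] + [(\iota(a), b)_K] = [(a \iota(a), b)_K] = [(\N_{K/F}(a), b)_K]$, together with the analogous identity in the second slot, the $K$-decomposition of $\alpha_K$ can be symmetrized so that each quaternion symbol has at least one slot in $F^\times$. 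For such symbols the projection formula $\corr_{K/F}(u, v)_K = (u, \N_{K/F}(v))_F$ with $u \in F^\times$ and $v \in K^\times$ supplies the descent to $F$-quaternion algebras.

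The main obstacle is controlling the number of factors: the Galois rearrangement potentially introduces extra Pfister pieces, and one must ensure that $\beta$ ends up with exactly $m$ terms rather than more. Here \Cref{EL:I3=0-classification} becomes essential: the Brauer-theoretic rewriting is interpreted as an identity among quadratic forms in $\I^2 F$, and the classification says that any two such forms sharing dimension, discriminant and Clifford invariant are Witt-equivalent. Surplus Pfister factors thereby collapse to hyperbolic forms and vanish in $W(F)$, producing a form of dimension $4m$ in $\I^2 F$ with Clifford invariant $\alpha$ which decomposes as a sum of $m$ $2$-fold Pfister forms; taking Clifford invariants of these Pfister forms yields the required expression of $\beta$ as a tensor product of $m$ $F$-quaternion algebras.
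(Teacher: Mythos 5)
Your opening reduction is sound and coincides with the paper's: once you exhibit $\beta\in\Br_2(F)$ given by $m$ $F$-quaternion classes with $\beta_K=\alpha_K$, Albert's theorem turns $\alpha-\beta$ into a single quaternion class and the count $m+1$ follows. But that lifting claim is the entire content of the lemma, and the argument you sketch for it does not go through. The symmetrization step fails: $\iota$-invariance of the \emph{sum} $\sum_i[(a_i,b_i)_K]$ does not let you rewrite it as a sum of symbols each having a slot in $F^\times$; the identity $[(a,b)_K]+[(\iota(a),b)_K]=[(\N_{K/F}(a),b)_K]$ applied term by term only computes $\alpha_K+\iota(\alpha_K)=2\alpha_K=0$, which carries no information about $\alpha_K$ itself, and there is no way to ``average'' the decomposition since the individual symbols need not be $\iota$-invariant. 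Likewise the projection formula only yields $\corr_{K/F}(\alpha_K)=2\alpha=0$, so corestriction cannot recover $\alpha$ and does not supply the descent. Finally, the appeal to \Cref{EL:I3=0-classification} in your last paragraph presupposes that you already hold a quadratic form over $F$ of controlled dimension whose Clifford invariant restricts to $\alpha_K$; producing such a form is precisely the missing step, and the classification theorem alone cannot bound the number of quaternion factors (a priori the anisotropic form of trivial discriminant representing a lift of $\alpha_K$ could have large dimension).

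The mechanism you are missing, and the one the paper uses, works with quadratic forms from the start: write $\alpha_K=[C(\varphi)]$ for a $(2m+2)$-dimensional form $\varphi$ over $K$ of trivial discriminant, take the Scharlau transfer $s_*\varphi$ along an $F$-linear map $s:K\to F$ with $\Ker s=F$, observe that $e_2(s_*\varphi)=\corr_{K/F}(\alpha_K)=2\alpha=0$ so that $s_*\varphi$ is hyperbolic by \Cref{EL:I3=0-classification} (this is where $\I^3F=0$ enters), and then invoke the descent theorem \cite[Theorem 34.9]{EKM08} to get $\varphi\simeq\psi_K$ with $\psi$ defined over $F$, of the same dimension $2m+2$ and trivial discriminant. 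Setting $\beta=[C(\psi)]$ gives the lift with exactly $m$ factors, after which your Albert-theorem reduction finishes the proof. Your proposal never produces this descent of $\varphi$ itself, which is what simultaneously achieves the lifting and the control of the number of factors.
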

\begin{proof}
    It follows from the hypothesis that $\alpha_K=[C(\varphi)]$ for a non-degenerate quadratic form $\varphi$ over $K$ with $\dim\varphi=2m+2$ and with trivial discriminant; see \cite[Lemma 5]{Mer91}.
    We choose an $F$-linear map $s:K\to F$  with $\Ker s= F$ and
    consider the transfer $s_\ast\varphi$ of the quadratic form $\varphi$ with respect to $s$.
    By \cite[Lemma 3.1]{BGBT18}, we have that $0=2\alpha= \corr_{K/F}(\alpha_F)=[C(s_\ast\varphi)]=e_2(s_\ast\varphi)$ in $\Br_2(F)$.
    Since $\I^3 F=0$, we get by \Cref{EL:I3=0-classification} that $s_\ast\varphi$ is hyperbolic.
    By \cite[Theorem 34.9]{EKM08}, we obtain that $\varphi\simeq\psi_{K}$ for some non-degenerate quadratic form $\psi$ over $F$ with trivial discriminant.
    Set $\beta=[C(\psi)]$ in $\Br_2(F)$. Note that $\dim\psi=\dim\varphi=2m+2$.
    It follows that $\beta$ is the class of a tensor product of $m$\, $F$-quaternion algebras.
    Furthermore, $C(\psi_K)\simeq C(\varphi)$ and hence $\beta_K=[C(\varphi)]=\alpha_K$.
    Therefore $(\alpha-\beta)_K=0$.
    As $[K:F]=2$, it follows that $\alpha-\beta=[Q]$ for an $F$-quaternion algebra $Q$.
    We conclude that $\alpha=\beta+[Q]$, which is the class of a tensor product of $m+1$\, $F$-quaternion algebras.
\end{proof}

\begin{prop}\label{P:I30-Merk}
    Assume that $\I^3F=0$. For $\alpha\in\Br_2(F)$, there exists a unique anisotropic even-dimensional quadratic form $\varphi$ of trivial discriminant over $F$ such that $\alpha=[C(\varphi)]$. 
    In particular, $\Br_2(F)$ is generated by the classes of $F$-quaternion algebras. 
\end{prop}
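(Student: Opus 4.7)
The plan is to establish existence and uniqueness separately. Uniqueness will follow from \Cref{EL:I3=0-classification} applied to a suitable difference form, and existence by iterated descent along a $2$-extension splitting tower, invoking \Cref{L:I30-Merk} at each step.

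For uniqueness, I take two such forms $\varphi_1,\varphi_2$ with $[C(\varphi_1)]=[C(\varphi_2)]=\alpha$, consider $\psi=\varphi_1\perp(-\varphi_2)$, and observe that $\psi$ is even-dimensional with trivial discriminant. Since the Clifford invariant defines a homomorphism $\I^2F\to\Br_2(F)$ and $\Br_2(F)$ is $2$-torsion, one has $e_2(\psi)=\alpha-\alpha=0$. By \Cref{EL:I3=0-classification}, $\psi$ is hyperbolic, and Witt cancellation then forces $\varphi_1\simeq\varphi_2$.

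For existence, I invoke the cited fact from \cite{Bec16} that every class in $\Br_2(F)$ is split by a finite $2$-extension, which one may realize as a tower of quadratic field extensions $F=F_0\subset F_1\subset\cdots\subset F_n$ with $\alpha_{F_n}=0$. Starting from the trivial observation that $\alpha_{F_n}$ is a tensor product of $0$ $F_n$-quaternion algebras and applying \Cref{L:I30-Merk} repeatedly from the top of the tower downward, I expect to arrive at a decomposition $\alpha=[Q_1\otimes\cdots\otimes Q_n]$ for some $F$-quaternion algebras $Q_i=(a_i,b_i)_F$. For each $i$, the $2$-fold Pfister form $\pi_i=\llangle a_i,b_i\rrangle$ has trivial discriminant, lies in $\I^2F$, and has Clifford invariant $[Q_i]$; since $e_2$ is additive on even-dimensional forms of trivial discriminant, taking $\varphi$ to be the anisotropic part of $\pi_1\perp\cdots\perp\pi_n$ gives an anisotropic even-dimensional form of trivial discriminant with $[C(\varphi)]=\alpha$. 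The ``in particular'' clause is immediate from this decomposition.

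The main obstacle in the plan is the iterated application of \Cref{L:I30-Merk}: to descend from $F_i$ to $F_{i-1}$ one needs $\I^3 F_{i-1}=0$, which is not automatic from the hypothesis $\I^3F=0$. I expect this to be resolved either by a careful choice of splitting tower (perhaps the construction underlying \cite{Bec16} furnishes a tower in which $\I^3$ vanishes at every intermediate field) or by reformulating the induction so that the lemma is ultimately applied only over fields where $\I^3$ is known to vanish.
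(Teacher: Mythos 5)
Your proposal follows essentially the same route as the paper: a splitting tower of quadratic extensions from \cite{Bec16}, iterated descent via \Cref{L:I30-Merk}, and uniqueness from \Cref{EL:I3=0-classification} (the paper phrases uniqueness as Witt-class uniqueness and passes to the anisotropic part, which is equivalent to your difference-form argument). The obstacle you flag at the end is not actually one: the condition $\I^3F=0$ is inherited by every quadratic field extension --- this is \cite[Corollary 35.8]{EKM08}, which the paper invokes by induction along the tower --- so \Cref{L:I30-Merk} applies at every level of any splitting tower, and no special choice of tower or reformulation of the induction is needed.
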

\begin{proof}
    Let $\alpha\in\Br_2(F)$.
    By \cite[Theorem]{Bec16}, there exist $r\in\nat$ and a sequence of field extensions $(F_i)_{i=0}^r$ of $F$ with $F_0=F$, $\alpha_{F_r}=0$ and $[F_i:F_{i-1}]=2$ for $1\leq i\leq r$.
    As $\I^3 F=0$, we obtain by induction on $i$ using \cite[Corollary~35.8]{EKM08} that $\I^3 F_i=0$ for $1\leq i\leq r$.
    For $0\leq j\leq r$, we conclude by induction on $j$ using \Cref{L:I30-Merk} that $\alpha_{F_{r-j}}$ is the class of a tensor product of $j$\, $F_{r-j}$-quaternion algebras. 

    Hence $\alpha$ is the class of a tensor product of $r$\, $F$-quaternion algebras.
    This implies that $\alpha=[C(\psi)]$ for a non-degenerate quadratic form of trivial discriminant over $F$ with $\dim\psi=2r+2$.
    Since $\I^3 F=0$, it follows by \Cref{EL:I3=0-classification} that $\psi$ is uniquely determined up to Witt equivalence by $[C(\psi)]$.
    Let $\varphi$ be the anisotropic part of $\psi$. 
    Then $\varphi$ has trivial discriminant, too. We conclude that $\varphi$ is the unique even-dimensional anisotropic quadratic form of trivial discriminant over $F$ with $[C(\varphi)]=[C(\psi)]=\alpha$.
\end{proof}

\Cref{EL:I3=0-classification} is extended from quadratic forms to orthogonal involutions in \cite[Theorem A]{LT99}.
There it is shown that, if $\I^3F=0$, then orthogonal involutions on a central simple $F$-algebra are classified by their discriminant and Clifford invariant. 
(The proof involves the fact formulated in \Cref{P:I30-Merk}.)
The following consequence and variation is obtained in \cite{BQM23}.

\begin{prop}\label{deg4m-ortogonal-isot-hyper-I^3F=0}
    Assume that $\I^3F=0$. Let $(A,\sigma)$ be an $F$-algebra with orthogonal involution of trivial discriminant and with  $\deg A\equiv 0\bmod 4$. 
    Then the following hold: 
\begin{enumerate}[$(a)$]
    \item If $\deg A>4$ and both simple components of $C(A,\s)$ have index at most $2$, then $\sigma$ is isotropic.
    \item If any of the simple components of $C(A,\s)$ is split, then $\sigma$ is hyperbolic.
\end{enumerate}
\end{prop}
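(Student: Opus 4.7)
The plan is to apply the classification theorem of Lewis and Tignol \cite{LT99}, which under $\I^3 F=0$ asserts that two orthogonal involutions on a given central simple $F$-algebra are conjugate precisely when they share the same discriminant and Clifford invariant. Accordingly, for each of (a) and (b) I will exhibit on $A$ a second orthogonal involution $\sigma'$ with the same $e_1$ and $e_2$ as $\sigma$ for which the desired conclusion (hyperbolicity, respectively isotropy) is visible, and then appeal to the classification.

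For (b), write $\deg A=2n$ with $n$ even. The relation $[C^+]+[C^-]=(n+1)[A]=[A]$ from \Cref{structure-clifford-awi} (using $2[A]=0$) shows that if one of $C^\pm$ is split then both classes lie in $\langle[A]\rangle$, so $e_2(\sigma)=0$ in $\Br(F)/\langle[A]\rangle$. Let $D$ be the $F$-division algebra Brauer equivalent to $A$, so that $\deg D\in\{1,2\}$, and write $A\simeq\End_DV$; since $4\mid\deg A$ and $\deg D\leq 2$, the rank $\rk V=\deg A/\deg D$ is an even positive integer. Thus there exists a hyperbolic non-degenerate quadratic form (when $D=F$) or a hyperbolic skew-hermitian form over $(D,\can_D)$ (when $\deg D=2$) of rank $\rk V$, whose adjoint is a hyperbolic orthogonal involution $\sigma_0$ on $A$. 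The invariants $e_1(\sigma_0)$ and $e_2(\sigma_0)$ both vanish, matching those of $\sigma$, so $(A,\sigma)\simeq(A,\sigma_0)$ by \cite{LT99}, and $\sigma$ is hyperbolic.

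For (a), the hypothesis that both simple components of $C(A,\sigma)$ have index at most $2$ lets me pick $F$-quaternion algebras $Q^+$ and $Q^-$ (possibly split) with $[Q^\pm]=[C^\pm]$; then $[Q^+]+[Q^-]=(n+1)[A]=[A]$, so $Q^+\otimes Q^-\sim A$. Set $(B,\tau)=(Q^+,\can_{Q^+})\otimes(Q^-,\can_{Q^-})$; by \Cref{app-disc-deg2-4-6}(a) and \Cref{clifford-ort-inv-biquat} this is an $F$-algebra of degree $4$ carrying an orthogonal involution of trivial discriminant, Brauer equivalent to $A$, and with Clifford algebra $Q^+\times Q^-$. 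Since $\deg A\geq 8$, the same existence argument as in (b), applied now to the degree $\deg A-4$, produces a hyperbolic $F$-algebra with orthogonal involution $(B',\tau')$ Brauer equivalent to $A$ of degree $\deg A-4$. Choose any orthogonal sum $(A,\sigma')\in(B,\tau)\boxplus(B',\tau')$; then $\sigma'$ is isotropic since $\tau'$ is, and by \Cref{ort-sum-rel-e_1-e_2} its discriminant is trivial and $e_2(\sigma')=e_2(\tau)+e_2(\tau')=[Q^+]=[C^+]=e_2(\sigma)$ in $\Br(F)/\langle[A]\rangle$. Applying \cite{LT99} again, $(A,\sigma)\simeq(A,\sigma')$, and hence $\sigma$ is isotropic.

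The proof rests on the deep classification of \cite{LT99}, itself relying on \Cref{P:I30-Merk}. Beyond this, the delicate step is the matching of Clifford invariants in part (a), which is exactly where the hypothesis $\ind C^\pm\leq 2$ is unavoidable: it lets us realise $[C^\pm]$ by actual quaternion algebras and hence peel off a degree-$4$ piece $(B,\tau)$ carrying the correct invariant. A minor housekeeping point is that the degree arithmetic makes $\deg A-4$ a positive even integer divisible by $\deg D$, so that the hyperbolic complement $(B',\tau')$ really exists; this is automatic from $\deg A\geq 8$, $4\mid\deg A$ and $\deg D\in\{1,2\}$.
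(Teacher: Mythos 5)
The paper itself does not prove this statement; it simply cites \cite[Proposition 4.2]{BQM23}, so your argument can only be judged on its own merits. Your overall strategy --- produce a second orthogonal involution on $A$ with the same discriminant and Clifford invariant for which the conclusion is visible, and then invoke the classification of orthogonal involutions by $(e_1,e_2)$ under $\I^3F=0$ from \cite{LT99} --- is reasonable, and the invariant computations (that $e_2(\sigma)=0$ in case $(b)$, and that $e_2(\sigma')=e_2(\sigma)$ in case $(a)$) are correct. The gap is the assertion ``Let $D$ be the $F$-division algebra Brauer equivalent to $A$, so that $\deg D\in\{1,2\}$.'' Nothing in the hypotheses gives $\ind A\leq 2$, and the claim is in fact false under the hypotheses of $(b)$: the condition $\I^3F=0$ does not bound the index of exponent-$2$ algebras (by \Cref{P:u6-biquatexist} and Merkurjev's constructions there are fields with $\I^3=0$ carrying biquaternion division algebras $D$), and the hyperbolic involution on $\mathbb{M}_2(D)$ has trivial discriminant and a split Clifford component, hence satisfies the hypotheses of $(b)$ while $\ind A=4$. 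In part $(a)$ the hypothesis only yields that $[A]=[C^+]+[C^-]$ is a sum of two quaternion classes, i.e.\ $\ind A\leq 4$, again not $\ind A\leq 2$.

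This is not a cosmetic issue, because the existence of your comparison involutions hinges on it: a hyperbolic $\varepsilon$-hermitian form over $(D,\gamma)$ exists only in even rank, so you need $\deg A/\ind A$ even in $(b)$ and $(\deg A-4)/\ind A$ even in $(a)$. If $A$ were a division algebra of degree divisible by $4$ satisfying the hypothesis of $(b)$ (which you have not excluded for $\deg A\geq 8$), then $A$ carries no hyperbolic involution at all and every involution on it is anisotropic, so the comparison argument collapses; one must instead prove that this case cannot occur. Similarly, in $(a)$ with $\ind A=4$ and $\deg A=8$, the only degree-$4$ algebra Brauer equivalent to $A$ is the division algebra itself, which admits no hyperbolic involution, so your $(B',\tau')$ need not exist; moreover, in that situation $\sigma$ corresponds to a rank-$2$ form over a division algebra, which is isotropic only if it is hyperbolic, i.e.\ only if $e_2(\sigma)=0$ in $\Br(F)/\langle[A]\rangle$ --- a condition not implied by ``both components have index at most $2$''. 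So for $(a)$ you would additionally have to show that the index-$4$ configuration cannot arise over a field with $\I^3F=0$. To complete the proof you must either establish that the hypotheses force $\ind A\leq 2$, or supply separate arguments for the higher-index cases; as it stands, the proof is incomplete.
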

\begin{proof}
    See \cite[Proposition 4.2]{BQM23}.
\end{proof}

\section{Isotropy over a quadratic field extension} 

Consider a quadratic field extension $K/F$.
For a non-degenerate quadratic form $\varphi$ defined  over $F$,  the anisotropic part of the quadratic form $\varphi_K$ obtained by extending scalars to $K$ is itself defined over $F$ (see \cite[Example 29.2]{EKM08}).
In this section, we show that the same phenomenon appears for non-degenerate skew-hermitian forms over a quaternion division algebra with its canonical involution.
This will be presented by a distinction of two cases, according to whether the given quaternion algebra remains a division algebra or splits by scalar extension to $K$.

Let $(B,\gamma)$ be an $F$-algebra with involution, $\varepsilon\in\{\pm1\}$ 
and $h:V\times V\to B$ an $\varepsilon$-hermitian form over $(B,\gamma)$, where $V$ is a finitely generated $B$-right module.
Consider a field extension $K/F$.
By scalar extension from $F$ to $K$, we obtain an $\varepsilon$-hermitian form $h_K: V\otimes_FK\times V\otimes_FK\to B\otimes_FK$ over $(B\otimes_FK,\gamma_K)$ given by $h_K(v\otimes\alpha,w\otimes\beta)=h(v,w)\otimes\alpha\beta$ for  $v,w\in V,\alpha,\beta\in K$.

The following fact is well-known, see e.g. the proof of \cite[Chapter 10, Theorem 3.2]{Schar85}. For convenience we include a proof.
\begin{prop}\label{quad-ext-excellent-skewhermit/Q-split} 
    Let $Q$ be an $F$-quaternion division algebra and let $K/F$ be a quadratic field extension such that
    $Q_K$ is split. 
    Fix $\alpha\in Q\setminus F$ with $\alpha^2\in F^{\times}$ and $K\simeq_F F(\alpha)$.
    Let $h$ be a non-degenerate skew-hermitian form over $(Q,\can_Q)$. 
    There exist $n\in\nat $, $\lambda_1,\ldots,\lambda_n\in F^{\times}$ and a non-degenerate skew-hermitian form $h'$ over $(Q,\can_Q)$ such that $h'_K$ is anisotropic and 
    $$h\simeq\langle\alpha\lambda_1,\ldots,\alpha\lambda_n\rangle\perp h'\,.$$
    Moreover, given any such decomposition, the form $\langle\alpha\lambda_1,\ldots,\alpha\lambda_n\rangle_K$ is hyperbolic and $e_1(\ad_{\langle\alpha\lambda_1,\ldots,\alpha\lambda_n\rangle})=(-\Nrd_Q(\alpha))^n\sq{F}$.
\end{prop}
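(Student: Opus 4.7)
The plan is to establish existence of the decomposition by induction on $\rk h$, splitting off one summand $\langle\alpha\lambda\rangle$ at a time. The base case $\rk h = 0$ is trivial ($n=0$, $h'=h$). For the inductive step, if $h_K$ is already anisotropic, set $n=0$ and $h'=h$; otherwise, the heart of the argument is the following claim: $h$ represents some element of $F^{\times}\alpha$. Granting this, if $h(x,x)=\alpha\lambda$ for some $x\in V$, then since $\alpha\lambda\in Q^{\times}$ and $Q$ is a division algebra, $xQ\subseteq V$ is a non-degenerate rank-$1$ submodule on which $h$ restricts to $\langle\alpha\lambda\rangle$, yielding $h\simeq\langle\alpha\lambda\rangle\perp h_1$ with $\rk h_1=\rk h-1$; the inductive hypothesis applied to $h_1$ completes the decomposition.

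For the key claim, if $h$ is isotropic then by \Cref{hermit-isot->hyperplane-subform} it represents every non-zero element of $\Skew(Q,\can_Q)$, in particular $\alpha$. For the substantive case where $h$ is anisotropic but $h_K$ is isotropic, I pick $0\ne v\in V\otimes_FK$ with $h_K(v,v)=0$ and write $v=v_1\otimes 1+v_2\otimes\alpha$ with $v_1,v_2\in V$, using the $F$-basis $\{1,\alpha\}$ of $K$. Since $\can_{Q_K}$ fixes $K$ pointwise, expanding $h_K(v,v)=0$ produces two equations in $Q$: $h(v_1,v_1)+a\cdot h(v_2,v_2)=0$ with $a:=\alpha^2\in F^{\times}$, and $h(v_1,v_2)+h(v_2,v_1)=0$, the latter forcing $c:=h(v_1,v_2)\in F$. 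Anisotropy of $h$ rules out $v_1=0$ or $v_2=0$. Writing $\beta:=h(v_2,v_2)=u\alpha+\beta^{\perp}$ with $u\in F$ and $\beta^{\perp}$ anticommuting with $\alpha$, and taking $w:=v_1+v_2\alpha\in V$, direct calculation using $\alpha\beta^{\perp}\alpha=-a\beta^{\perp}$ yields
\[
h(w,w)=-a\beta+2c\alpha-\alpha\beta\alpha=2(c-ua)\alpha\,.
\]
If $w\ne 0$, then necessarily $c\ne ua$ (else $h$ would be isotropic), whence $h(w,w)\in F^{\times}\alpha$. If $w=0$, then $v_1=-v_2\alpha$ and substitution into the first equation forces $\beta^{\perp}=0$, so $\beta=u\alpha$ with $u\ne 0$ by anisotropy, and already $h(v_2,v_2)=u\alpha\in F^{\times}\alpha$. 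In either case $h$ represents an element of $F^{\times}\alpha$.

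For the ``moreover'' clause, the discriminant formula is immediate from \Cref{comput-disc-orth-product-of-quaternions} together with $\Nrd_Q(\alpha\lambda_i)=\lambda_i^2\Nrd_Q(\alpha)$. For the hyperbolicity of $\langle\alpha\lambda_1,\ldots,\alpha\lambda_n\rangle_K$, since scaling by elements of $F^{\times}$ and taking orthogonal sums preserve hyperbolicity, it suffices to verify that $\langle\alpha\rangle_K$ is hyperbolic over $(Q_K,\can_{Q_K})$. The element $z:=\alpha\otimes 1-1\otimes\alpha\in Q_K$ is non-zero (since $\alpha\notin F$) and satisfies $z\cdot(\alpha\otimes 1+1\otimes\alpha)=0$, because $\alpha\otimes 1$ and $1\otimes\alpha$ commute in $Q_K$ (the latter being central) and both square to $a$. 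Hence $z$ is a zero-divisor in the split algebra $Q_K\simeq\mathbb{M}_2(K)$, so $zQ_K$ is a simple right ideal of rank $1=\tfrac12\rk\langle\alpha\rangle_K$. A brief computation using $\can_{Q_K}(\alpha\otimes 1)=-\alpha\otimes 1$ and $\can_{Q_K}(1\otimes\alpha)=1\otimes\alpha$ gives $\can_{Q_K}(z)\cdot(\alpha\otimes 1)\cdot z=0$, so $zQ_K$ is totally isotropic for $\langle\alpha\rangle_K$ and hence $\langle\alpha\rangle_K$ is hyperbolic. The main obstacle is the explicit identity $h(w,w)=2(c-ua)\alpha$, whose derivation relies on careful bookkeeping in the noncommutative algebra $Q$ with its symplectic involution; once it is in hand, the rest of the argument is routine.
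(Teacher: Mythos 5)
Your proof is correct, and while the overall strategy coincides with the paper's -- iteratively split off rank-one summands $\langle\alpha\lambda\rangle$ as long as $h_K$ remains isotropic -- you reach the key representation claim by a genuinely different route. The paper identifies $K$ with $F(\alpha)\subseteq Q$, writes $Q=K\oplus\beta K$ with projections $\pi_0,\pi_1$, and establishes the isomorphism of $K$-algebras with involution $(\End_KV,\ad_{h_1})\simeq(\End_QV,\ad_h)_K$ for the symmetric $K$-bilinear form $h_1=\pi_1\circ h$; an isotropic vector $v_1$ of $h_1$ then gives $h(v_1,v_1)=\pi_0(h(v_1,v_1))\in F\alpha$ with no further computation. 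You instead work with an isotropic vector $v_1\otimes 1+v_2\otimes\alpha$ of $h_K$ itself and check by hand that $w=v_1+v_2\alpha\in V$ satisfies $h(w,w)=2(c-ua)\alpha$ (I verified this identity, as well as your separate treatment of the degenerate case $w=0$, which the paper's formulation silently avoids). The two arguments are morally dual: your assignment $v_1\otimes1+v_2\otimes\alpha\mapsto v_1+v_2\alpha$ is exactly the identification of $V\otimes_FK$ with $V$ as a $K$-vector space that underlies the paper's isomorphism. What your version buys is independence from the involution-preserving isomorphism of scalar extensions, at the cost of the noncommutative bookkeeping you acknowledge. For the ``moreover'' clause, the paper deduces hyperbolicity of $\langle\alpha\lambda\rangle_K$ from \Cref{1-dim skew-herm-over-quaternion} together with \Cref{isotropic-hyperbolic-inv-equiv-cond}, whereas you exhibit the totally isotropic right ideal $zQ_K$ explicitly; both are valid, and your discriminant computation agrees with the paper's.
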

\begin{proof}
    If $h_K$ is anisotropic, then we may simply take $n=0$ and $h'=h$. Assume therefore now that $h_K$ is isotropic.
    We  identify $K$ with the subfield $F(\alpha)$ of $Q$.
    It follows by the Skolem-Noether Theorem that $Q=K\oplus\beta K$ for some $\beta\in Q\setminus F$ with $\beta^2\in F^{\times}$ and $\beta\alpha=-\alpha\beta$.
    Let $V$ denote the $Q$-right vector space on which $h$ is defined. 
    Consider the two projections $\pi_0,\pi_1:Q\to K$ determined by the decomposition $Q=K\oplus\beta K$, that is such that $z=\pi_0(z)+\beta\pi_1(z)$ for any $z\in Q$.
    Set $h_0=\pi_0\circ h$ and $h_1=\pi_1\circ h$.    
    Then $h_0$ is a non-degenerate skew-hermitian form over $(K,\iota)$, where $\iota$ is the non-trivial $F$-autormorphism of $K$, and $h_1$ is a non-degenerate symmetric bilinear form over $K$. 
	
    Any $\lambda\in K$ induces a $K$-linear map $r_{\lambda}:V\to V, v\mapsto v\lambda$.
    Now the map $\End_QV\times K\to\End_KV$, $(f,\lambda)\mapsto f\circ r_{\lambda}$ induces a homomorphism of $F$-vector spaces $\varphi:\End_QV\otimes_FK\to\End_KV$ such that $\varphi(f\otimes\lambda)=f\circ r_{\lambda}$ for all $f\in\End_QV$ and all $\lambda\in K$. 
    Clearly $\varphi$ is a $K$-algebra homomorphism. 
    Since $\End_QV\otimes_FK$ is simple and $\varphi(\id_V\otimes1_K)=\id_V\neq0$, it follows that $\varphi$ is injective.
    We have $$\deg\End_QV\otimes_FK=\deg\End_QV=\rk_QV\cdot\deg Q=\rk_KV=\deg\End_KV.$$
    It follows that $\varphi$ is also surjective.
    
    Consider $f\in\End_QV$, $\lambda\in K$ and $v,w\in V$. 
    As $h(v,f(w))=h(\ad_h(f)(v),w)$, we have $h_1(v,f(w))=h_1(\ad_h(f)(v),w)$. 
    Since $h_1$ is $K$-bilinear, we obtain that 
\begin{equation*}
\begin{split}
    h_1((\ad_{h}(f)\circ r_{\lambda})(v),w)&=h_1(\ad_{h}(f)(v\lambda),w)=h_1(\ad_h(f)(v)\lambda,w)\\
    &=h_1(v,f(w)\lambda)=h_1(v,f(w\lambda))=h_1(v,(f\circ r_{\lambda})(w)).
\end{split}
\end{equation*}	
    This shows that $\ad_{h_1}(f\circ r_{\lambda})=\ad_h(f)\circ r_{\lambda}$ for any $\lambda\in K$ and $f\in\End_QV$. 
    This implies that $\varphi$ is an isomorphism of $K$-algebras with involution. Hence
\begin{equation*}
    (\End_KV,\ad h_1)\simeq(\End_QV, \ad h)_K.
\end{equation*}	 
    Since $h_K$ is isotropic, it follows that $h_1$ is isotropic. Fix $v_1\in V\setminus\{0\}$ such that $h_1(v_1,v_1)=0$. 
    Hence $h(v_1,v_1)=h_0(v_1,v_1)$. Since $h_0$ is a skew-hermitian form over $(K,\iota)$, we get that $h(v_1,v_1)=\lambda\alpha$ for some $\lambda\in F$. 
    If $\lambda\neq0$, then we set $\lambda_1=\lambda$ and otherwise we set $\lambda_1=1$.
    In view of \Cref{hermit-isot->hyperplane-subform}, in either case, $\alpha\lambda_1\in\mg{Q}$ and $\alpha\lambda_1$ is represented by $h$.
    By \cite[Lemma 3.6.2]{Knus91}, we get that $h\simeq\langle \alpha\lambda_1\rangle\perp\tilde{h}$ for some non-degenerate skew-hermitian form $\tilde{h}$ over $(Q,\can_Q)$. 
    If $\tilde{h}_K$ is anisotropic, then we set $h'=\tilde{h}$, and otherwise, we repeat the process with $\tilde{h}$ in the place of $h$. 
    This proves the first part of the statement.

    By \Cref{1-dim skew-herm-over-quaternion}, for every $\lambda\in\mg{F}$, the $K$-algebra with involution $\Ad_Q(\la \alpha\lambda\ra)_K$ is hyperbolic.
    It follows by \Cref{isotropic-hyperbolic-inv-equiv-cond} that $\langle\alpha\lambda_1,\ldots,\alpha\lambda_n\rangle_K$ is hyperbolic. \Cref{comput-disc-orth-product-of-quaternions} yields that $e_1(\ad_{\langle\alpha\lambda_1,\ldots,\alpha\lambda_n\rangle})=(-1)^n(\lambda_1\cdots\lambda_n)^2\Nrd_Q(\alpha)^n\sq{F}=(-\Nrd_Q(\alpha))^n\sq{F}$.
\end{proof}

\begin{cor}\label{I^3=0-quad-excellent-skewhermit/Q-split}
    Assume that $\I^3F=0$.
    Let $Q$ be an $F$-quaternion division algebra and let $h$ be an anisotropic skew-hermitian form over $(Q,\can_Q)$. 
    Let $K/F$ be a quadratic field extension such that $Q_K$ is split.
    Then \hbox{$h\simeq h'\perp h''$} for some skew-hermitian forms $h'$ and $h''$ over $(Q,\can_Q)$ such that $h'_K$ is anisotropic, $h''_K$ is hyperbolic and $\rk h''\leq2$.
\end{cor}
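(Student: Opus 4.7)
The plan is to apply \Cref{quad-ext-excellent-skewhermit/Q-split} to get a decomposition
\[h\simeq\langle\alpha\lambda_1,\ldots,\alpha\lambda_n\rangle\perp h'\]
with $h'_K$ anisotropic and $\langle\alpha\lambda_1,\ldots,\alpha\lambda_n\rangle_K$ hyperbolic, for some chosen $\alpha\in Q\setminus F$ with $F(\alpha)\simeq K$. Setting $h''=\langle\alpha\lambda_1,\ldots,\alpha\lambda_n\rangle$ already yields forms $h'$ and $h''$ satisfying every requirement of the statement \emph{except} possibly the rank bound $\rk h''\leq 2$; in particular, $h''$ is automatically anisotropic over $F$ as a subform of the anisotropic $h$. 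The whole problem therefore reduces to showing that the hypothesis $\I^3F=0$ forces $n\leq 2$.

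For $n\geq 4$, I would pass to the rank-$4$ subform $\tilde h=\langle\alpha\lambda_1,\alpha\lambda_2,\alpha\lambda_3,\alpha\lambda_4\rangle$ of $h$. The moreover-part of \Cref{quad-ext-excellent-skewhermit/Q-split} gives $e_1(\ad_{\tilde h})=(-\Nrd_Q(\alpha))^4F^{\times 2}=(\alpha^2)^4F^{\times 2}$, which is trivial. So $\Ad_Q(\tilde h)$ is an $F$-algebra with orthogonal involution of degree~$8$, Brauer class $[Q]$, and trivial discriminant; since $\tilde h_K$ is hyperbolic, both simple components of $C(\Ad_Q(\tilde h))$ are split by $K$, and since $[K:F]=2$ their indices divide $2$. \Cref{deg4m-ortogonal-isot-hyper-I^3F=0}(a) then forces $\ad_{\tilde h}$ to be isotropic, contradicting anisotropy of $h$.

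For $n=3$ the relevant degree is $6$ and the discriminant $\alpha^2F^{\times 2}$ (the non-trivial class of $K/F$) is not trivial, so \Cref{deg4m-ortogonal-isot-hyper-I^3F=0} does not apply. Here the plan is to invoke the Lewis--Tignol classification from \cite{LT99} (stated in the paper just after \Cref{P:I30-Merk}): under $\I^3F=0$, orthogonal involutions on a fixed central simple algebra are determined up to conjugation by discriminant and Clifford algebra. Compare $\Ad_Q(\langle\alpha\lambda_1,\alpha\lambda_2,\alpha\lambda_3\rangle)$ with the model $\Ad_Q(\hh\perp\langle\alpha\rangle)$. Both live on $\mathbb{M}_3(Q)$; both have discriminant $\alpha^2F^{\times 2}$ (by \Cref{comput-disc-orth-product-of-quaternions} on one side, and \Cref{ort-sum-rel-e_1-e_2}(b) together with the vanishing of the discriminant of the hyperbolic part on the other); and in both cases $K$-hyperbolicity of the underlying form forces the Clifford algebra, a $K$-central simple algebra of degree $4$ by \Cref{structure-clifford-awi}, to split over $K$, so the two Clifford algebras agree as $K$-algebras. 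Lewis--Tignol therefore identifies the two $F$-algebras with involution, and since $\hh\perp\langle\alpha\rangle$ contains $\hh$ and is manifestly isotropic, so is $\langle\alpha\lambda_1,\alpha\lambda_2,\alpha\lambda_3\rangle$, contradicting anisotropy of $h$.

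The main obstacle is the $n=3$ case: degree~$6$ with non-trivial discriminant is outside the scope of both \Cref{app-disc-deg2-4-6} and \Cref{deg4m-ortogonal-isot-hyper-I^3F=0}, so the argument genuinely has to be routed through the Lewis--Tignol classification, which itself depends on the $\I^3F=0$ machinery assembled earlier in the paper. By contrast, the $n\geq 4$ case is a fairly direct application of \Cref{deg4m-ortogonal-isot-hyper-I^3F=0}(a) once the discriminant of the rank-$4$ subform has been computed.
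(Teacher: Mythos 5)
Your argument is correct, but it takes a genuinely different route from the paper at the decisive step. Both proofs start identically: apply \Cref{quad-ext-excellent-skewhermit/Q-split}, set $h''=\langle\alpha\lambda_1,\ldots,\alpha\lambda_n\rangle$, note that $h''$ is anisotropic as a subform of $h$, and reduce everything to the bound $n\leq2$. The paper then finishes in one line by quoting \cite[Proposition~4.1]{BQM23}, which states precisely that an anisotropic algebra with involution over a field with $\I^3=0$ that becomes split and hyperbolic over a quadratic extension has degree at most $4$. You instead reprove the special case you need with the tools already in the paper, and your two-step argument works: for $n\geq4$ the rank-$4$ subform has degree $8$, trivial discriminant, and (since it is $K$-hyperbolic and $A_K$ is split) both Clifford components of index at most $2$, so \Cref{deg4m-ortogonal-isot-hyper-I^3F=0}$(a)$ applies; for $n=3$ the comparison with $\Ad_Q(\hh\perp\langle\alpha\rangle)$ via the Lewis--Tignol classification is sound, since both involutions have discriminant $\alpha^2\sq{F}$ and both Clifford algebras are split central simple $K$-algebras of degree $4$, hence isomorphic. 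Two small points you should make explicit: first, the claim that $K$-hyperbolicity splits the Clifford algebra (which here has center $K$) needs the observation that $C(A,\sigma)\otimes_FK\simeq C(A_K,\sigma_K)$ decomposes over $K\otimes_FK\simeq K\times K$ into $C(A,\sigma)$ and its conjugate ${}^{\iota}C(A,\sigma)$, and triviality of $e_2(\sigma_K)$ together with splitness of $A_K$ splits both components; second, you are leaning on \cite[Theorem~A]{LT99}, which the paper only paraphrases in prose, so you should cite it in the precise form (same discriminant and isomorphic Clifford algebras imply conjugate involutions) rather than via the invariant $e_2$, which is undefined here because the discriminant is non-trivial. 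What your approach buys is self-containedness -- it avoids the external reference to \cite{BQM23} -- at the cost of a longer case analysis and a dependence on the full Lewis--Tignol classification for the $n=3$ case.
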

\begin{proof}
    By \Cref{quad-ext-excellent-skewhermit/Q-split}, there exist two skew-hermitian forms $h'$ and $h''$ over $(Q,\can_Q)$ such that $h'_K$ is anisotropic, $h''_K$ is hyperbolic and $h\simeq h'\perp h''$. 
    As $h$ is anisotropic, so is $h''$.
    Since $\I^3F=0$, and $(\Ad_Q(h''))_K$ is split and hyperbolic, 
    it follows by \cite[Proposition 4.1]{BQM23} that $\deg\Ad_Q(h'')\leq4$, whereby $\rk h''\leq2$.
\end{proof}

\begin{prop}\label{quad-ext-excellent-inv-division}
    Let $(D,\gamma)$ be an $F$-division algebra with involution.
    Let $\varepsilon\in\{\pm1\}$ and $h$ a non-degenerate $\varepsilon$-hermitian form over $(D,\gamma)$. 
    Let $K/F$ be a quadratic field extension such that $D_K$ is a division algebra. Then $$h\simeq h_1\perp\ldots\perp h_n\perp h'$$ 
    for some $n\in\nat$, $\varepsilon$-hermitian forms $h_1,\ldots,h_n$ over $(D,\gamma)$ of rank $2$ such that $(h_i)_{K}$ is hyperbolic for $1\leq i\leq n$ and some $\varepsilon$-hermitian form $h'$ over $(D,\gamma)$ such that $h'_{K}$ is anisotropic.
\end{prop}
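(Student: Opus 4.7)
The plan is to induct on the rank of $h$. When $h_K$ is anisotropic we simply take $n=0$ and $h'=h$, so we may assume $h_K$ is isotropic. The task is then to extract a non-degenerate rank-$2$ $D$-submodule $U$ of $V$ with $(h|_U)_K$ hyperbolic; the standard orthogonal decomposition $h\simeq h|_U\perp h|_{U^\perp}$ for non-degenerate hermitian forms over division algebras then lets us set $h_1 = h|_U$ and invoke the induction hypothesis on $h|_{U^\perp}$.

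If $h$ is already isotropic over $F$, \Cref{hermit-isot->hyperplane-subform} provides $\hh$ as a subform and we take $U$ to be its underlying module. Otherwise, fix $d\in\mg{F}$ with $K = F(\sqrt d)$ and set $\theta = \sqrt d$. Pick an isotropic $v\in V\otimes_F K$ for $h_K$, written $v = v_1\otimes 1 + v_2\otimes\theta$ with $v_1,v_2\in V$. Expanding $h_K(v,v)=0$ and using $\theta^2 = d\in F$ to separate the components in $D\otimes 1$ and $D\otimes\theta$ of $D\otimes_F K$ gives
\[
  h(v_1,v_1) + d\,h(v_2,v_2) = 0 \qquad\text{and}\qquad h(v_1,v_2) + \varepsilon\gamma\bigl(h(v_1,v_2)\bigr) = 0.
\]
I claim $v_1,v_2$ are $D$-linearly independent. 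Neither can vanish, as anisotropy of $h$ would otherwise force $h(v_i,v_i) = 0$ to fail; and if $v_2 = v_1 a$ for some $a\in D$, then setting $\alpha := h(v_1,v_1)\in\mg{D}$, the two displayed identities reduce to $\gamma(a)\alpha = -\alpha a$ and subsequently $\alpha = d\,\alpha a^2$, forcing $a^2 = d^{-1}$. This exhibits $F[a]$ as an $F$-subfield of $D$ isomorphic to $K$, so $K\otimes_F K\simeq K\times K$ embeds into $D_K$, contradicting the hypothesis that $D_K$ is a division algebra. Granted independence, $U := v_1 D + v_2 D$ has rank $2$, and $h|_U$ is non-degenerate (else a radical vector of $h|_U$ would be an isotropic vector of $h$ over $F$, contradicting anisotropy). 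Finally $(h|_U)_K$ contains the isotropic vector $v$, and being non-degenerate of rank $2$ over the division algebra $D_K$, it must be isometric to $\hh$ by \Cref{hermit-isot->hyperplane-subform} applied over $K$.

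The main obstacle is the linear-independence claim, which is the unique place where the hypothesis that $D_K$ remains a division algebra is used: it prevents an isotropic vector over $K$ from collapsing onto a single $D$-line over $F$, which would otherwise leave only a rank-$1$ subform (automatically anisotropic over a division algebra) rather than the rank-$2$ piece needed to produce the hyperbolic factor $h_i$.
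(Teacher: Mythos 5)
Your argument is correct and follows essentially the same route as the paper: reduce to the case where $h$ is anisotropic, take an isotropic vector of $h_K$, show that its two components over $F$ span a non-degenerate rank-$2$ subspace that becomes hyperbolic over $K$, split it off, and iterate. The only difference is in the justification of the linear-independence step: where you compute $a^2=d^{-1}$ to embed $K$ into $D$ and thereby contradict $D_K$ being a division algebra, the paper observes more directly that a rank-$1$ non-degenerate form over the division algebra $D_K$ cannot be isotropic, so the subspace spanned by the two components must have rank $2$.
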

\begin{proof}
    In view of \Cref{hermit-isot->hyperplane-subform}, it is enough to prove the statement for the anisotropic part of $h$.  
    We may therefore assume that $h$ is anisotropic.
    
    If $h_K$ is anisotropic, then we take $n=0$ and $h'=h$. Assume now that $h_K$ is isotropic. 
    Let $V$ be the $D$-right vector space on which $h$ is defined and fix $\alpha\in K\setminus F$. 
    As $h_K$ is isotropic and $K= F(\alpha)=F\oplus F\alpha$, there exist $u,v\in V$ not both zero such that $h_K(u+v\alpha,u+v\alpha)=0$.
    Denote by $W$ the subspace of $V$ spanned by $u$ and $v$ and let $W^\perp$ denote its orthogonal complement with respect to $h$.
    Since $h$ is anisotropic, $h|_{W}$ is a non-degenerate subform of $h$.
    Since $D_{K}$ is a division algebra and $(\ad_{h|_W})_K$ is isotropic, we deduce that $\dim_DW=\dim_{D_{K}} W_{K}=2$. 
    Set $h_1=h|_{W}$. Then $V=W\oplus W^{\perp}$ and $h\simeq h_1\perp h_1'$ with $h_1'=h|_{W^{\perp}}$.
    As $\rk h_1=2$ and $(h_1)_{K}$ is isotropic, we get that $(h_1)_{K}$ is hyperbolic. 
    Repeating this process with $h_1'$ in the place of $h$, we will finally reach a decomposition of $h$ as desired.
\end{proof}

\section{Orthogonal $u$-invariant of quaternion algebras} 
\label{section:orth-hermit-u-inv-quaternion}  

In this section, we obtain an upper bound on the rank of anisotropic skew-hermitian forms defined over a quaternion algebra with canonical involution in terms of the $u$-invariant when $\I^3=0$. 

\begin{prop}\label{P:isotropy-first kind-split-odd deg} 
    Let $(A,\sigma)$ be an $F$-algebra with orthogonal involution such that $\deg A>u(F)$. 
    If $A$ is split, then $\sigma$ is isotropic. In particular, if $\deg A$ is odd, then $\sigma$ is isotropic.
\end{prop}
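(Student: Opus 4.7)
The plan is to handle the two assertions in turn, reducing the second to the first.

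For the first assertion, suppose $A$ is split. Then $A\simeq\End_FV$ for some finite-dimensional $F$-vector space $V$ with $\dim_FV=\deg A$. By the correspondence between orthogonal involutions on $\End_FV$ and non-degenerate quadratic forms on $V$ up to scaling (recalled in \Cref{section:hermit&inv}), we can write $\sigma=\ad_\varphi$ for a non-degenerate quadratic form $\varphi$ on $V$. Then $\dim\varphi=\dim_FV=\deg A>u(F)$, so by the definition of the $u$-invariant the form $\varphi$ is isotropic. Applying \Cref{isotropic-hyperbolic-inv-equiv-cond} (with $D=F$, $\gamma=\id_F$, and $\varepsilon=1$), we conclude that $\sigma=\ad_\varphi$ is isotropic.

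For the second assertion, it suffices to show that if $A$ carries an orthogonal involution and $\deg A$ is odd, then $A$ is split. Indeed, any $F$-involution on $A$ forces $A\otimes_FA$ to be split, hence $\exp A$ divides $2$. Since $\ind A$ divides $\deg A$, which is odd, the integer $\ind A$ is odd. But $\ind A$ and $\exp A$ have the same prime divisors, so the only prime that could divide $\ind A$ is $2$; combining these two facts forces $\ind A=1$, i.e.\ $A$ is split. The hypothesis $\deg A>u(F)$ then lets us conclude by the first assertion.

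I do not expect any significant obstacle here; the only mildly delicate point is the reduction for the odd-degree case, which hinges on the standard fact that $\ind A$ and $\exp A$ share the same prime divisors together with the observation that an $F$-involution on $A$ forces $\exp A\mid 2$. Everything else is a direct application of the correspondence between orthogonal involutions on a split algebra and non-degenerate quadratic forms (up to scaling) together with the definition of $u(F)$.
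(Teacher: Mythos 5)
Your proof is correct and follows essentially the same route as the paper: in the split case, identify $\sigma$ with $\ad_\varphi$ for a quadratic form $\varphi$ of dimension $\deg A>u(F)$ and invoke the isotropy correspondence, and reduce the odd-degree case to the split case. The only difference is that the paper simply cites \cite[Corollary 2.8]{BOI} for the fact that an odd-degree algebra with an involution of the first kind is split, whereas you reprove it inline via $\exp A\mid 2$, $\ind A\mid\deg A$ and the equality of prime divisors of index and exponent — a correct and standard argument.
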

\begin{proof}
    Since $A$ carries an orthogonal involution, by \cite[Corollary 2.8]{BOI}, $A$ is split when $\deg A$ is odd. 
    
    Assume now that $A$ is split. Then $(A,\sigma)\simeq\Ad(\varphi)$ for some non-degenerate quadratic form $\varphi$ over $F$.
    As $\dim \varphi=\deg A>u(F)$, we have that $\varphi$ is isotropic. Therefore $(A,\sigma)$ is isotropic.    
\end{proof}

\begin{prop}\label{bounds-deg4m-ort-trivialdisc-iso-I^3F=0}
    Assume that $\I^3F=0$. 
    Let $(A,\sigma)$ be an $F$-algebra with orthogonal involution of trivial discriminant with $\ind A\leq2$ and $\deg A\equiv0\bmod 4$.
    If $\deg A>u(F)+2$, then $(A,\sigma)$ is isotropic.
\end{prop}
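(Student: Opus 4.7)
If $A$ is split, then \Cref{P:isotropy-first kind-split-odd deg} applies because $\deg A>u(F)+2>u(F)$, and $\sigma$ is isotropic. Assume therefore that $\ind A=2$, so that $A\sim Q$ for an $F$-quaternion division algebra $Q$ and $(A,\sigma)\simeq\Ad_Q(h)$ for a non-degenerate skew-hermitian form $h$ over $(Q,\can_Q)$ of rank $n=\deg A/2$, where $n$ is even and $n>u(F)/2+1$.

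The plan is to invoke the classification of orthogonal involutions under $\I^3F=0$ due to \cite{LT99} (the extension of \Cref{EL:I3=0-classification} to algebras with involution), according to which two orthogonal involutions on $A$ with the same discriminant and Clifford invariant are conjugate. Hence it suffices to produce an isotropic orthogonal involution on $A$ with the same invariants as $\sigma$. Concretely, I will set $h^{*}:=h'\perp\hh$ for a skew-hermitian form $h'$ over $(Q,\can_Q)$ of rank $n-2$ with $e_1(\ad_{h'})=1$ and $e_2(\ad_{h'})=e_2(\sigma)$; then by \Cref{ort-sum-rel-e_1-e_2} the involution $\ad_{h^{*}}$ will have the same discriminant and Clifford invariant as $\sigma$, while being isotropic because $h^{*}$ contains the hyperbolic plane $\hh$.

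To construct $h'$, apply \Cref{P:I30-Merk}: the Clifford invariant $e_2(\sigma)$ lifts to a class $[C(\varphi)]\in\Br_2(F)$ where $\varphi$ is anisotropic in $I^2F$ of trivial discriminant with $\dim\varphi\leq u(F)$. Decompose $\varphi\equiv\pi_1\perp\cdots\perp\pi_r$ in the Witt group, each $\pi_i=\llangle c_i,d_i\rrangle$ a two-fold Pfister form, and put $Q_i:=(c_i,d_i)_F$. \Cref{app-disc-deg2-4-6}(a) together with \Cref{clifford-ort-inv-biquat} then produces, for each $i$, a rank-two skew-hermitian form $h_i$ over $(Q,\can_Q)$ with trivial discriminant such that $\Ad_Q(h_i)\simeq(Q_i,\can_{Q_i})\otimes(Q\otimes Q_i,\can_{Q\otimes Q_i})$, giving Clifford invariant $[Q_i]+\langle[Q]\rangle$. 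Set
\[
h':=h_1\perp\cdots\perp h_r\perp\tfrac{n-2-2r}{2}\cdot\hh\,.
\]
Then \Cref{ort-sum-rel-e_1-e_2} confirms that $h'$ has rank $n-2$, trivial discriminant, and Clifford invariant $\sum_i[Q_i]+\langle[Q]\rangle=e_2(\sigma)$. The main obstacle is to verify that the number $r$ of Pfister summands satisfies $2r\leq n-2$, so the construction fits within the available rank; this amounts to bounding the symbol length of $[C(\varphi)]$ in $\Br_2(F)$, which under $\I^3F=0$ and $\dim\varphi\leq u(F)$ follows from the strict bound $\deg A>u(F)+2$ together with the parity condition $\deg A\equiv 0\bmod 4$, via the structure of $I^2F$ in this setting.
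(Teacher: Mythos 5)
Your overall strategy -- use the Lewis--Tignol classification of orthogonal involutions under $\I^3F=0$ and exhibit an isotropic involution on $A$ with the same discriminant and Clifford invariant as $\sigma$ -- is legitimate in principle, but the construction of the comparison form $h'$ has two genuine gaps, and the second one is fatal to the stated bound.

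First, to realize the symbol $[Q_i]$ as $e_2$ of a rank-two skew-hermitian form over $(Q,\can_Q)$ you write $\Ad_Q(h_i)\simeq(Q_i,\can_{Q_i})\otimes(Q\otimes Q_i,\can_{Q\otimes Q_i})$, which tacitly assumes that $Q\otimes_F Q_i$ is Brauer equivalent to a quaternion algebra, i.e.\ that $Q$ and $Q_i$ are linked. The condition $\I^3F=0$ does not give this: for $u(F)=8$ there exist biquaternion division algebras (\Cref{P:u6-biquatexist} already produces them over suitable quadratic extensions when $u=6$, and Merkurjev's fields with $u=8$, $\I^3=0$ contain them directly), so $Q\otimes Q_i$ may have index $4$ and no such decomposition of a degree-$4$ piece of $\mathbb{M}_{n/2}(Q)$ exists. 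Second, and more decisively, the rank budget $2r\leq n-2$ does not follow from $\deg A>u(F)+2$. The class $[C^+]$ lifts via \Cref{P:I30-Merk} to an anisotropic $\varphi$ with $\dim\varphi=2m\leq u(F)$, and the best general bound on the number of symbols is $r\leq m-1\leq \tfrac{u(F)}{2}-1$; your construction then needs $\deg A-4\geq 4r$, i.e.\ roughly $\deg A\geq 2u(F)$, whereas the hypothesis only gives $\deg A\geq u(F)+4$. Already for $u(F)=8$ and $\deg A=12$ one may have $r=3$ and $2r=6>n-2=4$, so the form $h'$ cannot be assembled. Your closing sentence asserts that the inequality ``follows from the strict bound together with the parity condition,'' but no argument is given and the inequality is false in general.

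For contrast, the paper's proof avoids constructing a representative altogether: it uses the two components $C^\pm$ to produce quadratic forms $q^\pm$ of dimension at most $u(F)$, shows via $\I^3F=0$ that $q_0^+\perp-n_Q'$ is Witt equivalent to $q^-$ and hence isotropic, and thereby finds a quadratic extension $K=F(\sqrt{-a})$ splitting $Q$ over which $q^+$ becomes isotropic. It then compares $\deg A$ with $\dim q^+$ by descending the anisotropic part of $h_K$ using \Cref{I^3=0-quad-excellent-skewhermit/Q-split}, which bounds the hyperbolic defect by $2$ and yields $\deg A\leq u(F)+2$ for anisotropic $\sigma$. If you want to salvage your approach, you would need an existence result for trivial-discriminant forms of prescribed Clifford invariant and of every admissible rank down to roughly $\tfrac{1}{2}(u(F)+4)$ over $(Q,\can_Q)$; this is essentially equivalent to the proposition itself, so the detour through the classification theorem does not save work.
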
 
\begin{proof}
    If $A$ is split, then the statement follows by \Cref{P:isotropy-first kind-split-odd deg}. 
    Assume now that $\ind A=2$. Hence, $A\sim Q$ for an $F$-quaternion division algebra $Q$. 
    By \Cref{structure-clifford-awi}, we have  $C(A,\sigma)\simeq C^+\times C^-$ for some central simple $F$-algebras $C^+$ and $C^-$ such that $\exp C^+,\exp C^-\leq2$ and $C^+\otimes_FC^-\sim Q$. 
	 
    It follows by \Cref{P:I30-Merk} that $[C^+]=e_2(q^+)$ and $[C^-]=e_2(q^-)$ for two even-dimensional non-degenerate quadratic forms  $q^+$ and $q^-$ over $F$ of trivial discriminant. 
    Recall that $e_2$ vanishes for forms in $\I^3F$. Hence \Cref{ort-sum-rel-e_1-e_2}~$(c)$ yields that $e_2(q^+)$ and $e_2(q^-)$ depend on the forms $q^+$ and $q^-$, respectively, only up to Witt equivalence and similarity.
    We may thus choose $q^+$ and $q^-$ in such way that $\dim q^+=\dim q^-\leq u(F)$ and both $q^+$ and $q^-$ represent $1$. 
    Let $m\in\nat $ be such that $\dim q^+=\dim q^-=2m$. 
    Let $n_Q$ denote the norm form of $Q$, which is a $2$-fold Pfister form over $F$. Let $n'_Q$ denote the $3$-dimensional subform such that 
    $n_Q=\langle1\rangle\perp n_Q'$.
    We write $q^+=\langle1\rangle\perp q^+_0$ and $q^-=\langle1\rangle\perp q^-_0$ for some non-degenerate quadratic forms $q_0^+$, $q_0^-$ over $F$. 
    Note that $q_0^+\perp-q^-\perp-n'_Q$ has trivial discriminant and 
    $$e_2(q_0^+\perp-q^-\perp-n'_Q)=e_2(q^+)+e_2(q^-)+e_2(n_Q)=[C^+]+[C^-]+[Q]=0.$$ 
    Since $\I^3F=0$, we obtain by \Cref{EL:I3=0-classification} that $q^+_0\perp-n'_Q$ is Witt equivalent to $q^-$.  
    Since further $\dim(q^+_0\perp-n'_Q)=2m+2>2m=\dim q^-$, we obtain that $q^+_0\perp -n'_Q$ is isotropic. 
    Hence $q^+_0$ and $n'_Q$ represent some element $a\in F^{\times}$ in common.
    We set $K=F(\sqrt{-a})$. It follows that $(n_Q)_K$ and $q^+_K$ are isotropic. 
    In particular, $Q_K$ is split.
    Since $A\sim Q$, we obtain that $(A,\sigma)_K\simeq\Ad(\varphi)$ for some non-degenerate quadratic form $\varphi$ over $K$ with $\dim\varphi=\deg A$. 
    Furthermore, we have that $e_2(\varphi)=e_2(\sigma_K)=[C^+_K]=e_2(q^+_K)$. 
    Since $\I^3F=0$, we also have $\I^3K=0$, by \cite[Corollary 35.8]{EKM08}. 
    We conclude by \Cref{EL:I3=0-classification} that $\varphi$ is Witt equivalent to $q^+_K$.
	
    Assume now that $\sigma$ is anisotropic. 
    Then $(A,\sigma)\simeq\Ad_Q(h)$ for some anisotropic skew-hermitian form $h$ over $(Q,\can_Q)$ with $\rk h=\frac{1}{2}\deg A$. 
    Since $Q_K$ is split, it follows by \Cref{I^3=0-quad-excellent-skewhermit/Q-split} that $h\simeq h_0\perp h_1$ for two non-degenerate skew-hermitian forms $h_0, h_1$ over $(Q,\can_Q)$ such that $\rk h_0\leq2$, $(h_0)_K$ is hyperbolic, and $(h_1)_K$ is anisotropic. 
    We obtain that $$\Ad(\varphi)\simeq(A,\sigma)_K\simeq(\Ad_Q(h_0\perp h_1))_K$$
    and that $\varphi\simeq \psi\perp (\rk h_0)\times\hh$ for some anisotropic quadratic form $\psi$ over $K$.
    Hence $\deg A=\dim\varphi =\dim\psi+2\rk h_0\leq \dim\psi+4$.
    Recall that $q^+_K$ is isotropic and Witt equivalent to $\varphi$, hence also to $\psi$.
    Therefore $$\dim \psi \leq \dim q^+_K-2=2m-2.$$ 
    We conclude that $\deg A\leq 2m+2\leq u(F)+2$.
\end{proof}

\begin{prop}\label{C:trivial disc-(n-m)dim-subform of skew-herm}
    Assume that $\I^3F=0$ and $u(F)<\infty$. Let $Q$ be an $F$-quaternion division algebra and $m=\lceil \frac{1}4 u(F)\rceil$.
    Let $h$ be a non-degenerate skew-hermitian form over $(Q,\can_Q)$. Then the following hold:
\begin{enumerate}[$(a)$]
    \item If $\rk h=2n$ for some integer $n>m$ and $\ad_h$ has trivial discriminant, then $h\simeq (n-m)\times \hh \perp h'$ for some non-degenerate skew hermitian form $h'$ over $(Q,\can_Q)$.
    \item If $\rk h\geq 2m+2$, 
    then $h\simeq h_0\perp h_1$ for two non-degenerate skew-hermitian forms $h_0$ and $h_1$ over $(Q,\can_Q)$ such that $\rk h_0=2m$ and $e_1(\ad_{h_0})$ is trivial.	
    \item If $\rk h\geq 2m+4$, then $h$ is isotropic. 
\end{enumerate}
\end{prop}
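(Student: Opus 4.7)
The plan is to prove parts (a), (b), (c) in order: (a) by induction on $n-m$ using \Cref{bounds-deg4m-ort-trivialdisc-iso-I^3F=0}, (b) via a construction through \Cref{exists-orthogonal-given-disc-I^3=0} combined with a Witt-cancellation argument, and (c) as a direct consequence of the steps used in (b).

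For part (a), I would induct on $n-m \geq 1$. For any $n \geq m+1$ and any non-degenerate skew-hermitian $h$ of rank $2n$ with trivial discriminant, the algebra with involution $(A,\sigma) = \Ad_Q(h)$ satisfies $\deg A = 4n$, $\ind A = 2$, and $\deg A \equiv 0 \bmod 4$. Since $m \geq u(F)/4$, we have $4n \geq 4m+4 > u(F)+2$, so \Cref{bounds-deg4m-ort-trivialdisc-iso-I^3F=0} forces $\sigma$, hence $h$, to be isotropic. By \Cref{hermit-isot->hyperplane-subform}, $h \simeq \hh \perp h''$ with $\rk h'' = 2(n-1)$. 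The form $\hh \simeq \la \gamma, -\gamma \ra$ for any $\gamma \in \Skew(Q,\can_Q) \setminus \{0\}$ has trivial discriminant by \Cref{comput-disc-orth-product-of-quaternions}, so \Cref{ort-sum-rel-e_1-e_2}(b) forces $e_1(\ad_{h''}) = 1$. The base case $n-m=1$ gives $h' = h''$ of rank $2m$, and the inductive step uses the induction hypothesis on $h''$ to produce $h'' \simeq (n-m-1)\cdot\hh \perp h'$.

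For part (b), I first reduce odd rank to even: if $\rk h$ is odd, then $h$ represents some non-zero pure quaternion $\alpha_0$ (non-degeneracy), giving a splitting $h \simeq \la\alpha_0\ra \perp h^\#$ with $\rk h^\#$ even and $\geq 2m+2$; a decomposition of $h^\#$ of the desired form then lifts by absorbing $\la\alpha_0\ra$ into $h_1$. So assume $\rk h = r$ is even, $r \geq 2m+2$, and set $d = e_1(\ad_h)$. By \Cref{comput-disc-orth-product-of-quaternions}, $d \in \Nrd_Q(\mg Q)\sq F / \sq F$, so \Cref{exists-orthogonal-given-disc-I^3=0} with $n=2$ furnishes a rank-$2$ skew-hermitian form $g$ over $(Q,\can_Q)$ with $e_1(\ad_g) = d$. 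Then $h \perp (-g)$ has even rank $r+2$ and trivial discriminant (since $e_1(\ad_{-g}) = e_1(\ad_g) = d$ and \Cref{ort-sum-rel-e_1-e_2}(b) multiplies). Applying part (a) to $h \perp (-g)$ with $n = (r+2)/2 \geq m+2$ produces at least $(r+2)/2 - m \geq 2$ hyperbolic planes. Using the identity $g \perp (-g) \simeq 2 \cdot \hh$ (the diagonal in $V_g \oplus V_g$ is a totally isotropic subspace of half rank) together with Witt cancellation for $\varepsilon$-hermitian forms over the division algebra $(Q,\can_Q)$ (\cite[Chap.~I, Theorem~6.5.2]{Knus91}), I deduce $h \simeq g \perp h'$ for some $h'$ of rank $r-2$ with $e_1(\ad_{h'}) = 1$. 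If $r = 2m+2$, set $h_0 = h'$ and $h_1 = g$. Otherwise $\rk h' > 2m$ and part (a) gives $h' \simeq k \cdot \hh \perp h_0$ with $\rk h_0 = 2m$; set $h_1 = g \perp k \cdot \hh$.

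For part (c), reducing odd rank to even as in (b), it suffices to treat $\rk h = r$ even with $r \geq 2m+4$. Running the same construction as in (b) yields $h \simeq g \perp h'$ with $\rk h' = r-2 \geq 2m+2$ and $e_1(\ad_{h'}) = 1$. Part (a) applied to $h'$ (with $n = (r-2)/2 \geq m+1$) forces $h' \simeq \hh \perp h''$, so $h'$, and hence $h$, is isotropic.

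The main obstacle is the Witt-cancellation step in (b): the form $g$ is produced abstractly by \Cref{exists-orthogonal-given-disc-I^3=0} and is not visibly a subform of $h$, so one must carefully exhibit the embedding $g \hookrightarrow h$ using the Witt-index lower bound supplied by part (a), verify the identity $g \perp (-g) \simeq 2 \cdot \hh$, and justify the applicability of hermitian Witt cancellation in this setting.
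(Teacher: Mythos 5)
Your proposal is correct and follows essentially the same route as the paper: part (a) by iterating \Cref{bounds-deg4m-ort-trivialdisc-iso-I^3F=0} together with \Cref{hermit-isot->hyperplane-subform}, and parts (b), (c) by adjoining a rank-$2$ form of equal discriminant from \Cref{exists-orthogonal-given-disc-I^3=0}, applying (a), and cancelling $2\times\hh$ via Witt cancellation. The steps you flag as delicate (the reduction of odd rank, $g\perp(-g)\simeq 2\times\hh$, and hermitian Witt cancellation over a division algebra) are all handled exactly as in the paper's argument and pose no obstacle.
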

\begin{proof}
    If $\rk h=2n+1$ for some integer $n\geq m$, then to show $(b)$ or $(c)$, we may replace $h$ by a non-degenerate subform of rank $2n$. 
    Hence we may assume throughout without loss of generality that $\rk h=2n$ for an integer $n\geq m$.

    Assume first that $\ad_h$ has trivial discriminant.
    Then $\Ad_Q(h)$ is an $F$-algebra with orthogonal involution of trivial discriminant. 
    If $n>m$, then it follows by \Cref{bounds-deg4m-ort-trivialdisc-iso-I^3F=0} that $\Ad_Q(h)$ is isotropic, whereby $h$ is isotropic and hence contains $\hh$ as a subform, by \Cref{hermit-isot->hyperplane-subform}.
    An $(n-m)$-fold application of this argument establishes $(a)$.

    To show $(b)$ and  $(c)$, we fix $d\in F^{\times}$ with $e_1(\ad_{h})=d F^{\times2}$.  
    Using \Cref{exists-orthogonal-given-disc-I^3=0}, we take a non-degenerate skew-hermitian form $h''$ of rank $2$ over $(Q,\can_Q)$ with $e_1(\ad_{h''})=d F^{\times2}$.
    We consider the non-degenerate skew-hermitian form $h'=h\perp h''$ over $(Q,\can_Q)$. 
    Then $\rk h'=2n+2$, and by \Cref{ort-sum-rel-e_1-e_2} ~$(b)$, $e_1(\ad_{h'})$ is trivial. 
    Hence, we obtain by $(a)$ that $$h'\simeq(n+1-m)\times\hh\perp h_0$$ 
    for a non-degenerate skew-hermitian form $h_0$ over $(Q,\can_Q)$ with $\rk h_0=2m$. 
    As $e_1(\ad_{h'})$ is trivial, we obtain by \Cref{ort-sum-rel-e_1-e_2}~$(b)$ that $e_1(\ad_{h_0})$ is trivial.
    Set $h_1=-h''$. Then 
    $$h\perp2\times\hh\simeq h'\perp h_1\simeq(n+1-m)\times\hh\perp h_0\perp h_1\,.$$ 
    
    Assume now that $n\geq m+1$. 
    Using Witt Cancellation \cite[Corollary 6.4.2]{Knus91}, we get that $$h\simeq(n-1-m)\times\hh\perp h_0\perp h_1.$$
    This establishes $(b)$. Furthermore, if $n>m+1$, then $h$ is isotropic, and this shows $(c)$.
\end{proof}

\begin{thm}\label{bound-anis-alg-ind2-orth-I^3=0}
    Assume that $\I^3F=0$. Let $(A,\sigma)$ be an $F$-algebra with orthogonal involution with $\ind A\leq2$. 
    If $\deg A>4\lceil\frac{u(F)}{4}\rceil+6$, then $(A,\sigma)$ is isotropic.
\end{thm}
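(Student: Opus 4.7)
My plan is to split according to whether $A$ is split or of index $2$, and in each case appeal to a result proved earlier in the paper. First, if $u(F)=\infty$, then $4\lceil u(F)/4\rceil+6=\infty$ and the hypothesis on $\deg A$ is vacuous, so there is nothing to prove. I will therefore assume $u(F)<\infty$ and set $m=\lceil u(F)/4\rceil$, so the hypothesis becomes $\deg A>4m+6$.

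Suppose first that $A$ is split. Since $4\lceil u(F)/4\rceil\geq u(F)$, the hypothesis gives $\deg A> u(F)+6>u(F)$, and \Cref{P:isotropy-first kind-split-odd deg} immediately yields that $\sigma$ is isotropic. Suppose now that $\ind A=2$. Then $A\sim Q$ for an $F$-quaternion division algebra $Q$, and since $\ind A\mid\deg A$ the degree of $A$ is even. Because $\can_Q$ is symplectic and $\sigma$ is orthogonal, \Cref{correspondence-(skew)hermit-involution} provides a non-degenerate skew-hermitian form $h$ over $(Q,\can_Q)$ with $(A,\sigma)\simeq\Ad_Q(h)$ and $\rk h=\tfrac12\deg A$.

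The assumption $\deg A>4m+6$ translates into $\rk h>2m+3$, hence $\rk h\geq 2m+4$. By \Cref{C:trivial disc-(n-m)dim-subform of skew-herm}~$(c)$, $h$ is isotropic, and then \Cref{isotropic-hyperbolic-inv-equiv-cond} shows that $\sigma$ is isotropic as well. This completes the reduction in both cases.

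There is no genuine obstacle at this stage: the content of the theorem is already encoded in \Cref{C:trivial disc-(n-m)dim-subform of skew-herm}~$(c)$, which in turn rests on \Cref{bounds-deg4m-ort-trivialdisc-iso-I^3F=0}. The only point to verify carefully is the arithmetic matching the rank threshold $2m+4$ from part $(c)$ with the degree threshold $4m+6$ stated here, together with the observation that the split case must be handled separately since part $(c)$ is formulated for skew-hermitian forms over a quaternion division algebra.
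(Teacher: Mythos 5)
Your proof is correct and follows essentially the same route as the paper: split case via \Cref{P:isotropy-first kind-split-odd deg}, index-$2$ case by passing to a skew-hermitian form over $(Q,\can_Q)$ of rank $\tfrac12\deg A>2\lceil\frac{u(F)}{4}\rceil+3$ and invoking \Cref{C:trivial disc-(n-m)dim-subform of skew-herm}~$(c)$. Your explicit handling of $u(F)=\infty$ and the careful rank arithmetic are fine and merely make explicit what the paper leaves implicit.
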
 
\begin{proof}
    If $\ind A=1$, then \Cref{P:isotropy-first kind-split-odd deg} implies that $\s$ is isotropic. Assume now that $\ind A=2$. 
    Then $A$ is Brauer equivalent to an $F$-quaternion division algebra $Q$. 
    We have $(A,\sigma)\simeq\Ad_Q(h)$ for some non-degenerate skew-hermitian form $h$ over $(Q,\can_Q)$ with $\rk h=\frac{1}{2}\deg A$.
    Thus $\rk h>2\lceil\frac{u(F)}{4}\rceil+3$, and it follows by \Cref{C:trivial disc-(n-m)dim-subform of skew-herm} that $h$ is isotropic.
    Hence $(A,\sigma)$ is isotropic.
\end{proof}

\begin{prop}\label{iso-deg4m-nontrivial-e1}
    Assume that $\I^3F=0$. Let $(A,\sigma)$ be an $F$-algebra with orthogonal involution such that $\ind A\leq2$. 
    Assume that $\deg A=4\lceil\frac{u(F)}{4}\rceil+4$.
    Let $d\in F^{\times}$ be such that $e_1(\sigma)=d F^{\times2}$ and set $K=F(\sqrt{d})$. 
    Assume that $u(K)\leq4\lceil\frac{u(F)}{4}\rceil$. Then $(A,\sigma)$ is isotropic.
\end{prop}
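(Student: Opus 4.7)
Setting $m = \lceil u(F)/4 \rceil$ so that $\deg A = 4m + 4$, the plan is to apply \Cref{bounds-deg4m-ort-trivialdisc-iso-I^3F=0} over the quadratic extension $K$ to obtain isotropy of $\sigma_K$, and then descend the isotropy back to $F$ via the excellence of skew-hermitian forms over $(Q,\can_Q)$ with respect to $K/F$.

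First I dispose of the split case: if $\ind A = 1$, then $\sigma$ is the adjoint involution of a non-degenerate quadratic form of dimension $4m + 4 > u(F)$, so $\sigma$ is isotropic by \Cref{P:isotropy-first kind-split-odd deg}. Assume henceforth $\ind A = 2$, so $A \sim Q$ for an $F$-quaternion division algebra $Q$ and $(A,\sigma) \simeq \Ad_Q(h)$ for a non-degenerate skew-hermitian form $h$ over $(Q,\can_Q)$ of rank $2m+2$ with $e_1(\ad_h) = dF^{\times 2}$. Over $K$, the involution $\sigma_K$ has trivial discriminant, $\ind A_K \leq 2$, degree $4m + 4 > u(K) + 2$ (since $u(K) \leq 4m$), and $\I^3 K = 0$ by \cite[Corollary 35.8]{EKM08}. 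Thus \Cref{bounds-deg4m-ort-trivialdisc-iso-I^3F=0} applied over $K$ gives that $(A,\sigma)_K$, equivalently $h_K$, is isotropic.

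Suppose for contradiction that $h$ is anisotropic. The plan is to use the excellence of skew-hermitian forms over $(Q,\can_Q)$ under $K/F$ to extract a decomposition $h \simeq h_1 \perp \bar h$ with $\rk h_1 = 2$, $(h_1)_K$ hyperbolic, and $\rk \bar h = 2m$. If $Q_K$ is a division algebra, this follows directly from \Cref{quad-ext-excellent-inv-division} combined with the isotropy of $h_K$ (which forces at least one rank-$2$ piece to become hyperbolic over $K$). If $Q_K$ is split, I fix $\alpha \in Q \setminus F$ with $\alpha^2 = d$ (so $F(\alpha) \simeq K$ embeds in $Q$) and apply \Cref{quad-ext-excellent-skewhermit/Q-split} to get $h \simeq \langle \alpha\lambda_1,\ldots,\alpha\lambda_n\rangle \perp h'$ with $h'_K$ anisotropic; via Morita equivalence the anisotropic quadratic form over $K$ corresponding to $h'_K$ has dimension $4m + 4 - 2n \leq u(K) \leq 4m$, forcing $n \geq 2$, and the rank-$2$ subform $\langle \alpha\lambda_1, \alpha\lambda_2\rangle$ has trivial discriminant by the formula in \Cref{quad-ext-excellent-skewhermit/Q-split} and becomes hyperbolic over $K$ since $-\Nrd_{Q_K}(\alpha\lambda_i) = \lambda_i^2 d \in K^{\times 2}$ (using \Cref{1-dim skew-herm-over-quaternion}). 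In either case, since $\Ad_Q(h_1) \sim Q$ is not split and $(\Ad_Q(h_1))_K$ is hyperbolic, \Cref{quad-ext-hyper->trivial-disc} gives $e_1(\ad_{h_1})$ trivial, and hence $e_1(\ad_{\bar h}) = dF^{\times 2}$ by \Cref{ort-sum-rel-e_1-e_2}(b).

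The main obstacle is the final step: deriving the contradiction from this decomposition. By \Cref{app-disc-deg2-4-6}(a) I factor $\Ad_Q(h_1) \simeq (Q_1,\can_{Q_1}) \otimes (Q_2,\can_{Q_2})$ with $[Q_1]+[Q_2] = [Q]$ in $\Br(F)$; by \Cref{iso=hyper-for-deg4+orth+trivialdisc} at least one of $(Q_i)_K$ is split, say $(Q_1)_K$, so $Q_1 \simeq (d,a)_F$ for some $a \in F^{\times}$ and $K$ embeds in $Q_1$. I then plan to use the Lewis-Tignol classification \cite[Theorem A]{LT99} of orthogonal involutions when $\I^3 F=0$ by constructing an isotropic orthogonal involution $\sigma^*$ on $A$ sharing the discriminant $d$ and the Clifford-type invariant of $\sigma$, by replacing $\bar h$ with a form of the shape $\hh \perp \tilde h$ of rank $2m$, discriminant $d$, and matching Clifford class. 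The hypothesis $u(K) \leq 4m$ is essential here: it bounds the anisotropic dimension of $\bar h_K$ and, via \Cref{P:I30-Merk} applied over $K$, allows the required Brauer class (viewed through the central simple $K$-algebra $C(A,\sigma)$ described by \Cref{structure-clifford-awi}) to be realized by a sufficiently small anisotropic even-dimensional form. The classification then forces $(A,\sigma) \simeq (A,\sigma^*)$, contradicting the anisotropy of $\sigma$; the principal difficulty lies in carrying out this invariant-matching cleanly for an involution of non-trivial discriminant, where the Clifford algebra is a central simple $K$-algebra rather than a product.
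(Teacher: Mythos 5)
Your reduction tracks the paper's own proof closely up to the point where you have $h\simeq h_1\perp\bar h$ with $\rk h_1=2$, $(h_1)_K$ hyperbolic, $e_1(\ad_{h_1})$ trivial, $e_1(\ad_{\bar h})=dF^{\times2}$, and a factor $Q_1\simeq(d,a)_F$ of $\Ad_Q(h_1)$ with $(Q_1)_K$ split. (Two small points there: the degenerate case $d\in F^{\times2}$, where $K=F$ and the excellence statements for a quadratic field extension do not apply, should be disposed of separately by \Cref{bounds-deg4m-ort-trivialdisc-iso-I^3F=0}; the rest of that part is fine.) The genuine gap is the final step. What you give there is a plan, not a proof: you never construct the isotropic involution $\sigma^*$, and the invariant-matching you would need --- realizing the Clifford algebra of $(A,\sigma)$, which for non-trivial discriminant is a central simple $K$-algebra, by an isotropic form of the right rank and discriminant --- is exactly the hard content of the proposition; your own closing sentence concedes this. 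Note also that your heuristic for where $u(K)\leq 4\lceil\frac{u(F)}4\rceil$ enters (bounding an anisotropic form over $K$ via \Cref{P:I30-Merk} in the invariant-matching) is not how the hypothesis actually functions: it is needed only to make $\sigma_K$ isotropic at the outset, via $\deg A\geq u(K)+4$.

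The paper closes the argument without ever analysing the Clifford algebra over $K$. Writing $H=Q_1\simeq(d,\lambda)_F$, one considers $h'=h_1\perp\bar h\perp-\lambda\bar h$. Since $\bar h\perp-\lambda\bar h=\la 1,-\lambda\ra\otimes\bar h$ and $e_1(\ad_{\bar h})=dF^{\times2}$, the doubled piece has trivial discriminant and Clifford invariant $[(d,\lambda)_F]=[H]$ in $\Br(F)/\la[Q]\ra$, which cancels against $e_2(\ad_{h_1})=[H]$ coming from \Cref{clifford-ort-inv-biquat}; so $e_1(\ad_{h'})$ and $e_2(\ad_{h'})$ are both trivial, \Cref{deg4m-ortogonal-isot-hyper-I^3F=0} makes $h'$ hyperbolic, and since $\rk(h_1\perp\bar h)=\rk h>\frac12\rk h'$, \Cref{half-rk-subform-hyper->iso} forces $h$ to be isotropic. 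This ``doubling by $\la 1,-\lambda\ra$'' is precisely the device that converts the non-trivial-discriminant situation into one where the invariants defined in the paper apply; without it, or some substitute for your unfinished invariant-matching over $K$, your argument does not reach a contradiction.
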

\begin{proof}
    If $\ind A=1$, then the statement follows by \Cref{P:isotropy-first kind-split-odd deg}. 
    Assume that $\ind A=2$. Then $A$ is Brauer equivalent to an $F$-quaternion division algebra $Q$.
    If $d\in \sq{F}$, then \Cref{bounds-deg4m-ort-trivialdisc-iso-I^3F=0} implies that $(A,\sigma)$ is isotropic. 
    So we may assume that $d\in F^{\times}\setminus F^{\times2}$, whereby $K/F$ is a quadratic field extension.
	
    Set $n=\lceil\frac{u(F)}{4}\rceil+1$. Then $\deg A=4n\geq u(K)+4$. 
    Since $\I^3 F=0$, it follows by \cite[Theorem 34.22]{EKM08} that $\I^3K=0$.
    As $\sigma_K$ has trivial discriminant, it follows by \Cref{bounds-deg4m-ort-trivialdisc-iso-I^3F=0} that $\sigma_K$ is isotropic.
   
    We fix a non-degenerate skew-hermitian form $h$ over $(Q,\can_Q)$ with $\rk h=2n$ such that  $(A,\sigma)\simeq\Ad_Q(h)$. Then $h_K$ is isotropic.
    We claim that $h\simeq h_0\perp h_1$ for some non-degenerate skew-hermitian forms $h_0$ and $h_1$ over $(Q,\can_Q)$ with $\rk h_0=2$, $\rk h_1=2n-2$ and such that $(h_0)_{K}$ is hyperbolic.
    If $Q_K$ is split, this follows by \Cref{I^3=0-quad-excellent-skewhermit/Q-split}, using that $\rk h\geq\frac{u(K)}{2}+2$. 
    If $Q_K$ is a division algebra, this follows by \Cref{quad-ext-excellent-inv-division}.
    Since $Q$ is a division algebra, $\rk h_0=2$ and $(h_0)_K$ is hyperbolic, we obtain by \Cref{quad-ext-hyper->trivial-disc} that $e_1(\ad_{h_0})$ is trivial.
	
    We now set $(A_i,\sigma_i)=\Ad_Q(h_i)$ for $i=0,1$. 
    It follows that $\deg A_0=4$, $\deg A_1=4n-4$, $A_0\sim A_1\sim Q$ and the involutions $\sigma_0$ and $\sigma_1$ are orthogonal.
    Furthermore $e_1(\sigma_0)=e_1(\ad_{h_0})=\sq{F}$. 
    It follows by \Cref{ort-sum-rel-e_1-e_2} $(b)$ that $e_1(\sigma_1)=e_1(\ad_{h_1})=e_1(\ad_{h})=e_1(\sigma)=d F^{\times2}$. 
    By \Cref{app-disc-deg2-4-6}, we have that $(A_0,\sigma_0)\simeq(H,\can_H)\otimes_F(H',\can_{H'})$ for some $F$-quaternion algebras $H$ and $H'$.
    In view of \Cref{clifford-ort-inv-biquat}, we have $e_2(\ad_{h_0})=e_2(\sigma_0)=[H]$ in $\Br(F)/\langle [Q]\rangle$.
    Then $H\otimes_FH'\simeq A_0\sim Q$, and since $(A_0,\sigma_0)_K$ is hyperbolic, it follows by \Cref{iso=hyper-for-deg4+orth+trivialdisc} that one of $H_K$ and  $H'_K$ is split. 
    We may assume without loss of generality that $H_K$ is split. 
    Hence $H\simeq(d,\lambda)_F$ for some $\lambda\in F^{\times}$.
    We consider the $F$-algebra with orthogonal involution $$\Ad_Q(h_1\perp-\lambda h_1)\in(A_1,\sigma_1)\boxplus(A_1,\sigma_1)\,.$$ 
    Its degree is $8n-8$, and by \Cref{ort-sum-rel-e_1-e_2}~$(b)$, its discriminant is trivial.
    Since $h_1\perp-\lambda h_1=\la 1, -\lambda\ra\otimes h_1$ and $e_1(\ad_{h_1})=d\sq{F}$, we conclude that $e_2(\ad_{h_1\perp-\lambda h_1})=[(d,\lambda)]$ in $\Br(F)/\la[Q]\ra$.
    We set $h'=h_0\perp h_1\perp-\lambda h_1$ and note that $\rk h'=4n-2$. 
    We consider the $F$-algebra with orthogonal involution $$\Ad_Q (h')\in(A_0,\sigma_0)\boxplus(A_1,\sigma_1)\boxplus(A_1,\sigma_1).$$ 
    By \Cref{ort-sum-rel-e_1-e_2}, we have that $e_1(\ad_{h'})$ is trivial and that
    $$e_2(\ad_{h'})=e_2(\ad_{h_0})+e_2(\ad_{h_1\perp-\lambda h_1})=[H]+[H]=0\quad\text{in}\,\Br(F)/\langle[Q]\rangle.$$
    Since $\I^3F=0$, it follows by \Cref{deg4m-ortogonal-isot-hyper-I^3F=0} that $h'$ is hyperbolic. 
    Then, since $\rk(h_0\perp h_1)=2n>\frac{1}{2}\rk h'$, we conclude by \Cref{half-rk-subform-hyper->iso} that $h_0\perp h_1$ is isotropic. 
    Therefore $h$ is isotropic. Hence $\sigma$ is isotropic.
\end{proof}

\begin{thm}\label{bound-anis-alg-ind2-orth-I^3=0+uK<uF}
    Assume that $\I^3F=0$. Let $k\in\nat$ be such that $u(K)\leq 4k$ for every field extension $K/F$ with $[K:F]\leq2$. 
    Let $A$ be a central simple $F$-algebra with $\ind A\leq 2$ and $\deg A>4k+2$.
    Then every orthogonal involution on $A$ is isotropic.
\end{thm}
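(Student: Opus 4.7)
The plan is to isolate the borderline case $\deg A = 4k+4$ (the smallest case covered by the theorem when $\deg A$ is even) and settle it via \Cref{bounds-deg4m-ort-trivialdisc-iso-I^3F=0} or \Cref{iso-deg4m-nontrivial-e1} depending on the discriminant of $\sigma$, and then to reduce all larger even degrees to this case by a subform argument.

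Taking $K = F$ in the hypothesis gives $u(F) \leq 4k$, and therefore $u(F) < \deg A$. By \Cref{P:isotropy-first kind-split-odd deg}, every orthogonal involution on $A$ is then isotropic as soon as $A$ is split; in particular, when $\deg A$ is odd, since in that case $\ind A = 1$. I may henceforth assume $\ind A = 2$ and $\deg A$ even, so $\deg A \geq 4k+4$. Let $Q$ be an $F$-quaternion division algebra with $A \sim Q$, and fix a non-degenerate skew-hermitian form $h$ over $(Q, \can_Q)$ such that $(A,\sigma) \simeq \Ad_Q(h)$; then $\rk h = \tfrac{1}{2}\deg A \geq 2k+2$.

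Now I treat $\deg A = 4k+4$. Set $n = \lceil u(F)/4\rceil$, so $n \leq k$. If $n < k$, then $\deg A = 4k+4 \geq 4n+8 > 4n+6$, so \Cref{bound-anis-alg-ind2-orth-I^3=0} gives the isotropy of $\sigma$. Assume that $n = k$, and pick $d \in F^{\times}$ with $e_1(\sigma) = dF^{\times 2}$. If $d \in F^{\times 2}$, then $e_1(\sigma)$ is trivial and \Cref{bounds-deg4m-ort-trivialdisc-iso-I^3F=0} applies, since $\deg A = 4k+4 > u(F)+2$. Otherwise $K = F(\sqrt{d})$ is a quadratic field extension of $F$, for which the hypothesis yields $u(K) \leq 4k = 4\lceil u(F)/4\rceil$, so that \Cref{iso-deg4m-nontrivial-e1} delivers isotropy.

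Finally, for $\deg A > 4k+4$, I diagonalize $h \simeq \la \alpha_1,\ldots,\alpha_{\rk h}\ra$ with $\alpha_i \in Q^\times \cap \Skew(Q,\can_Q)$ and consider the subform $h_0 = \la \alpha_1,\ldots,\alpha_{2k+2}\ra$ of rank $2k+2$. Then $(A',\sigma') = \Ad_Q(h_0)$ is an $F$-algebra with orthogonal involution of index $2$ and degree $4k+4$; by the previous paragraph, $\sigma'$ is isotropic, so $h_0$ is isotropic, and since $h_0$ is a subform of $h$, both $h$ and $\sigma$ are isotropic as well. The substantive work is entirely concentrated in \Cref{bounds-deg4m-ort-trivialdisc-iso-I^3F=0} and \Cref{iso-deg4m-nontrivial-e1}; the only genuine contribution of the present argument is to observe that the parameter $k$ governing quadratic extensions absorbs exactly the loss produced in the non-trivial discriminant branch, so that the hypothesis is perfectly calibrated to feed into \Cref{iso-deg4m-nontrivial-e1}.
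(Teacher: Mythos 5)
Your proposal is correct and follows essentially the same route as the paper: dispose of the split case via \Cref{P:isotropy-first kind-split-odd deg}, pass to a non-degenerate rank-$(2k+2)$ subform of the skew-hermitian form, and feed the resulting degree-$(4k+4)$ algebra into \Cref{iso-deg4m-nontrivial-e1} (whose hypothesis $u(F(\sqrt{d}))\leq 4\lceil u(F)/4\rceil$ is exactly what the assumption on quadratic extensions supplies when $\lceil u(F)/4\rceil=k$), with \Cref{bound-anis-alg-ind2-orth-I^3=0} covering the regime $\lceil u(F)/4\rceil<k$. The only difference is organizational — you case on $\deg A=4k+4$ versus larger, while the paper cases on whether $\deg A>4\lceil u(F)/4\rceil+6$ — and your explicit split into trivial/non-trivial discriminant merely unfolds what \Cref{iso-deg4m-nontrivial-e1} already handles internally.
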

\begin{proof}
    If $\ind A=1$, then the statement follows by \Cref{P:isotropy-first kind-split-odd deg}.
    Assume now that $\ind A=2$. Then $A$ is Brauer equivalent to an $F$-quaternion division algebra $Q$.
    Let $\s$ be an orthogonal involution on $A$.
    Then $(A,\s)\simeq \Ad_Q(h)$ for a non-degenerate skew-hermitian form $h$ over $(Q,\can_Q)$ with $\rk h= \frac{1}2\deg A>2k+1$.
    If $\deg A>4\lceil\frac{u(F)}{4}\rceil+6$, then it follows by \Cref{bound-anis-alg-ind2-orth-I^3=0} that $\s$ is isotropic. 
    Assume now that $\deg A\leq 4\lceil \frac{u(F)}4\rceil+6$.
    Since we have $u(F)\leq 4k$, we obtain that $4k+2< \deg A\leq 4\lceil \frac{u(F)}4\rceil+6 \leq 4k+6$.
    In particular $\lceil\frac{u(F)}4\rceil=k$ and  $\rk h\in\{2k+2,2k+3\}$.
    We choose a non-degenerate subform $h'$ of $h$ with $\rk h'=2k+2$ and consider the $F$-algebra $(A',\s')=\Ad_Q(h')$.
    We fix $d\in\mg{F}$ such that $e_1(\s')=d\sq{F}$ and set $K=F(\sqrt{d})$.
    Then $u(K)\leq 4k$, and it follows by \Cref{iso-deg4m-nontrivial-e1} that $\s'$ is isotropic.
    We obtain that $h'$ is isotropic, and hence so is $h$. Therefore $\s$ is isotropic.
\end{proof}

\begin{cor}\label{improv-u-inv6-bounds-orth-ind2-anisot}
    Assume that $u(F)\leq6$. 
    Let $(A,\sigma)$ be an $F$-algebra with orthogonal involution such that $\sigma$ is anisotropic and $\ind A\leq2$. 
    Then $\deg A\leq10$.
\end{cor}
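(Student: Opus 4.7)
The plan is to apply \Cref{bound-anis-alg-ind2-orth-I^3=0+uK<uF} with $k=2$, which yields the bound $\deg A\leq 4k+2=10$ whenever $\sigma$ is anisotropic and $\ind A\leq 2$. To this end, I must verify the two hypotheses of that theorem: that $\I^3F=0$, and that $u(K)\leq 8$ for every field extension $K/F$ with $[K:F]\leq 2$.

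The first hypothesis follows at once from $u(F)\leq 6$ via the Arason--Pfister Hauptsatz, since a nonzero anisotropic form in $\I^3F$ would have dimension at least $8$, contradicting $u(F)\leq 6$. Consequently $\I^3K=0$ for every quadratic extension $K/F$ as well, by \cite[Corollary~35.8]{EKM08}.

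For the second hypothesis, the case $K=F$ is immediate since $u(F)\leq 6\leq 8$. The nontrivial case is $[K:F]=2$, where the required bound $u(F(\sqrt{d}))\leq 8$ must be established for every $d\in\mg{F}\setminus\sq{F}$. This quadratic-extension bound on the $u$-invariant, under the hypothesis $\I^3=0$, is the main obstacle in the argument; it is a consequence of the behavior of the Witt ring under a degree-$2$ extension, combining the Scharlau transfer with the Arason--Pfister constraints on $\I^2$. Once this is in place, \Cref{bound-anis-alg-ind2-orth-I^3=0+uK<uF} with $k=2$ delivers the conclusion directly: $\deg A>10$ would force $\sigma$ to be isotropic, so anisotropy of $\sigma$ forces $\deg A\leq 10$.
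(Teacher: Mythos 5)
Your overall strategy is exactly the paper's: deduce $\I^3F=0$ from $u(F)\leq 6$ (via Arason--Pfister, as you say), and then feed \Cref{bound-anis-alg-ind2-orth-I^3=0+uK<uF} with $k=2$. The reduction itself is correct. The problem is that you explicitly identify the bound $u(K)\leq 8$ for quadratic extensions $K/F$ as ``the main obstacle'' and then do not prove it; you only assert that it follows from ``the Scharlau transfer with the Arason--Pfister constraints on $\I^2$.'' That gesture does not close the gap. The transfer argument (decompose an anisotropic form over $K$ as $\vartheta_K\perp\psi$ with $\vartheta$ defined over $F$ and $\dim\psi\leq\frac{1}{2}u(F)$) yields the Elman--Lam bound $u(K)\leq\frac{3}{2}u(F)\leq 9$, and nothing better; no amount of information about $\I^2$ alone improves $9$ to $8$.

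The missing ingredient is a parity statement: since $\I^3F=0$ implies $\I^3K=0$ for every quadratic extension $K/F$ (\cite[Theorem 34.22]{EKM08} or \cite[Corollary 35.8]{EKM08}), and since a field with $\I^3=0$ and finite $u$-invariant has \emph{even} $u$-invariant (\cite[Proposition 36.3]{EKM08}), the bound $u(K)\leq 9$ sharpens to $u(K)\leq 8$. This is precisely how the paper argues, citing \cite[Theorem 4.3]{EL73} for $u(K)\leq\frac{3}{2}u(F)$ and then the evenness result. Without this second step your hypothesis for \Cref{bound-anis-alg-ind2-orth-I^3=0+uK<uF} with $k=2$ is not verified (you would only have $u(K)\leq 9$, and $9>4k=8$), so the proof as written is incomplete at its acknowledged crux.
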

\begin{proof}
    Consider a quadratic field extension $K/F$. 
    We have $u(K)\leq\frac{3}{2} u(F)\leq9$, by \cite[Theorem 4.3]{EL73}.  
    Since $u(F)\leq 6$, in particular all $3$-fold Pfister forms are hyperbolic and thus $\I^3F=0$. 
    Hence also $\I^3K=0$, by \cite[Theorem 34.22]{EKM08}. 
    Then $u(K)$ is even, by \cite[Proposition 36.3]{EKM08}. Therefore $u(K)\leq 8$. 
    
    This argument shows that $u(K)\leq 8$ for every field extension $K/F$ with $[K:F]\leq 2$.
    We conclude by \Cref{bound-anis-alg-ind2-orth-I^3=0+uK<uF} that $\deg A\leq 10$.
\end{proof}

Let us reformulate the main results of this section in terms of the orthogonal $u$-invariant.
Consider a central simple $F$-algebra $B$ that carries an orthogonal involution $\gamma'$.
The \emph{orthogonal $u$-invariant of} $B$, denoted by $u^+(B)$, is defined as follows:
\begin{equation*}
    \mbox{$u^+(B)=\sup\{\rk h\mid h\,\text{anisotropic hermitian form over}\,(B,\gamma')\}$}.
\end{equation*} 
This definition does not depend on the particular choice of $\gamma'$; see \cite[Proposition 2.2]{Mah05}. 
For $B=F$, we obtain that $u^+(F)=u(F)$, as non-degenerate hermitian forms with respect to the orthogonal involution $\id_F$ on $F$ are given by the polar forms of non-degenerate quadratic forms.

Taking instead of an orthogonal involution a symplectic involution $\gamma$ on $B$, provided that such an involution exists, we can also express $u^+(B)$ in terms of skew-hermitian forms over $(B,\gamma)$:
\begin{eqnarray*}
    \mbox{$u^+(B)$\,}&\mbox{$=\sup\{\rk h\mid h\,\text{anisotropic skew-hermitian form over}\,(B,\gamma)\}$}.
\end{eqnarray*}
We will now use this latter description of $u^+(B)$ in the special situation where $B$ is an $F$-quaternion division algebra, by taking $\gamma=\can_B$.

\begin{cor}\label{orthogonal-hermit-u-quater-I^3=0}\label{improv-u-inv6-orth-hermit-u-inv-quat}
Let $Q$ be an $F$-quaternion division algebra.
\begin{enumerate}[$(a)$ ]
    \item If $\I^3 F=0$, then $u^+(Q)\leq2\lceil\frac{u(F)}{4}\rceil+3$.
    \item If $u(F)\leq 6$, then $u^+(Q)\leq 5$.
\end{enumerate} 
\end{cor}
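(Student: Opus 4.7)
The plan is to deduce both parts directly from the already-established isotropy theorems, using the dictionary between anisotropic skew-hermitian forms over $(Q,\can_Q)$ and anisotropic orthogonal involutions on the adjoint algebra $\Ad_Q(h)$. The key quantitative input is the relation $\deg \Ad_Q(h)=2\rk h$, which follows because $Q$ is a division algebra of degree $2$ and the underlying module $V$ of $h$ satisfies $\dim_Q V=\rk h$ (from the discussion in \Cref{section:hermit&inv}), combined with \Cref{isotropic-hyperbolic-inv-equiv-cond} which transfers anisotropy between $h$ and $\ad_h$.

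For part $(a)$, I would take an anisotropic skew-hermitian form $h$ over $(Q,\can_Q)$ and consider $(A,\s)=\Ad_Q(h)$. Then $A\sim Q$, so $\ind A\leq 2$, the involution $\s=\ad_h$ is orthogonal and anisotropic, and $\deg A=2\rk h$. Applying \Cref{bound-anis-alg-ind2-orth-I^3=0} contrapositively yields $\deg A\leq 4\lceil u(F)/4\rceil+6$, hence $\rk h\leq 2\lceil u(F)/4\rceil+3$. Taking the supremum over all such $h$ gives the asserted bound on $u^+(Q)$.

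For part $(b)$, the argument is identical except that instead of \Cref{bound-anis-alg-ind2-orth-I^3=0} we invoke \Cref{improv-u-inv6-bounds-orth-ind2-anisot}: any anisotropic $F$-algebra with orthogonal involution of index at most $2$ has degree at most $10$ when $u(F)\leq 6$. Thus $2\rk h=\deg \Ad_Q(h)\leq 10$, so $\rk h\leq 5$.

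There is essentially no obstacle in this proof: the hard work has already been done in \Cref{bound-anis-alg-ind2-orth-I^3=0} and \Cref{improv-u-inv6-bounds-orth-ind2-anisot}, and the corollary is a straightforward reformulation via the adjoint correspondence. The only thing to be slightly careful about is the hypothesis $\I^3F=0$ in $(b)$, which is implicit from $u(F)\leq 6$ (as all $3$-fold Pfister forms are hyperbolic over a field of $u$-invariant at most $6$), so that the use of \Cref{improv-u-inv6-bounds-orth-ind2-anisot} is legitimate.
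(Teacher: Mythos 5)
Your proposal is correct and is essentially the paper's argument: the paper deduces $(a)$ directly from \Cref{C:trivial disc-(n-m)dim-subform of skew-herm}$(c)$, of which \Cref{bound-anis-alg-ind2-orth-I^3=0} is the immediate reformulation via the adjoint correspondence you describe, so the two routes give the identical bound; part $(b)$ is cited from \Cref{improv-u-inv6-bounds-orth-ind2-anisot} in both. Your remark that $u(F)\leq 6$ forces $\I^3F=0$ matches the observation already made inside the proof of \Cref{improv-u-inv6-bounds-orth-ind2-anisot}.
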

\begin{proof}
    Part $(a)$ follows directly by \Cref{C:trivial disc-(n-m)dim-subform of skew-herm}, and $(b)$ is clear by \Cref{improv-u-inv6-bounds-orth-ind2-anisot}.
\end{proof}

\begin{rem}\label{remark:improv-u-inv6-hermit-quat}
    Let $Q$ be an $F$-quaternion division algebra. By \cite[Corollary 3.4]{Mah05}, the bound $u^+(Q)\leq\frac{5}{4}u(F)$ holds without any condition on $\I^3 F$. 
    This covers part $(a)$ from \Cref{orthogonal-hermit-u-quater-I^3=0} in the case where $u(F)=4$.
    When $\I^3F=0$ and $u(F)\geq 6$, then the bounds from \Cref{orthogonal-hermit-u-quater-I^3=0} are better than what can be obtained from \cite{Mah05}.
\end{rem}

\section{Anisotropic orthogonal involutions in degree $8$}\label{section:anisotropic-inv-deg8-I^3=0} 

In this section, we provide examples of anisotropic orthogonal involutions on a central simple algebra of degree $8$ over a field with $u$-invariant $4$. 
The discriminant of such an involution is necessarily non-trivial, in view of \Cref{bounds-deg4m-ort-trivialdisc-iso-I^3F=0}. 
We first collect some well-known facts, mostly coming from \cite{EL73}.

\begin{prop}\label{P:u4-Br2}
    Assume that $u(F)\leq 4$. Let $A$ be a central simple $F$-algebra with $\exp A\leq 2$.
    Then $A\sim Q$ for an $F$-quaternion algebra $Q$.
\end{prop}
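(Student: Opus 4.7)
The plan is to deduce this from \Cref{P:I30-Merk} together with the fact that $u(F)\leq 4$ forces the anisotropic witness form associated to $[A]$ to have dimension at most $4$.

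First I would show that $\I^3F=0$ under the hypothesis $u(F)\leq 4$. Indeed, every $3$-fold Pfister form has dimension $8$ and is therefore isotropic; since Pfister forms are round, they are hyperbolic, so $\I^3F=0$. Next, since $\exp A\leq 2$, we have $[A]\in\Br_2(F)$, so \Cref{P:I30-Merk} produces a (unique) anisotropic even-dimensional quadratic form $\varphi$ over $F$ with trivial discriminant such that $[A]=[C(\varphi)]$. From $u(F)\leq 4$ and the anisotropy of $\varphi$ we obtain $\dim\varphi\in\{0,2,4\}$.

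Now I would rule out the cases by case analysis. If $\dim\varphi=0$, then $[A]=0$ and $A$ is split, so we may take $Q=\mathbb{M}_2(F)$. The case $\dim\varphi=2$ is impossible: any $2$-dimensional form of trivial discriminant is isometric to $\langle 1,-1\rangle$, which is hyperbolic, contradicting anisotropy. In the remaining case $\dim\varphi=4$, a standard fact in the theory of quadratic forms says that a $4$-dimensional non-degenerate form of trivial discriminant is similar to a $2$-fold Pfister form $\llangle a,b\rrangle$ for some $a,b\in\mg{F}$; moreover, scaling does not change the Clifford invariant when the discriminant is trivial, so $[C(\varphi)]=[C(\llangle a,b\rrangle)]=[(a,b)_F]$ in $\Br_2(F)$. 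Setting $Q=(a,b)_F$ yields $A\sim Q$, as desired.

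The main (and essentially only) technical point is the identification of $4$-dimensional forms of trivial discriminant with scaled $2$-fold Pfister forms, and the resulting computation of their Clifford invariant as a quaternion class; this is a classical lemma (see e.g. \cite[Chap.~V]{EKM08}) and should be invoked, not reproved. Everything else is an immediate application of \Cref{P:I30-Merk}.
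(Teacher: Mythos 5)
Your proposal is correct and follows essentially the same route as the paper: both arguments invoke \Cref{P:I30-Merk} to write $[A]=[C(\varphi)]$ for an even-dimensional form $\varphi$ of trivial discriminant, use $u(F)\leq 4$ to reduce to $\dim\varphi=4$, and then identify the Clifford algebra of such a form with a quaternion algebra up to Brauer equivalence. The only difference is presentational: the paper simply chooses $\varphi$ of dimension exactly $4$ (padding with hyperbolic planes if necessary), whereas you keep $\varphi$ anisotropic and dispose of the dimensions $0$ and $2$ by hand, which is equally valid.
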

\begin{proof}
    It follows by \Cref{P:I30-Merk} that 
    $A\sim C(\varphi)$ for an even-dimensional non-degenerate quadratic form $\varphi$ over $F$ of trivial discriminant.
    Furthermore, as $u(F)\leq 4$, we may choose $\varphi$ such that $\dim \varphi=4$.
    Then $C(\varphi)\simeq \mathbb{M}_{2}(Q)$ for an $F$-quaternion algebra $Q$.
    We obtain that $A\sim C(\varphi)\sim Q$.
\end{proof}

\begin{lem}\label{P:u6-biquatexist}
Assume that $u(F)=6$. Then there exists an $F$-biquaternion division algebra. 
\end{lem}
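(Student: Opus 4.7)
The plan is to produce an anisotropic $6$-dimensional quadratic form of trivial signed discriminant over $F$; by the classical theory of Albert forms, any such form is the Albert form of a biquaternion algebra, and this biquaternion is a division algebra precisely when its Albert form is anisotropic.

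First, observe that $u(F)=6<8$ forces every $3$-fold Pfister form over $F$ to be isotropic and hence hyperbolic, so $\I^3F=0$. Choose an anisotropic $6$-dimensional form $\varphi$ over $F$ (which exists by $u(F)=6$), and let $d\in\mg{F}/\sq{F}$ denote its signed discriminant. If $d$ is trivial, then $\varphi$ itself is the desired Albert form and we are done.

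Otherwise, I would form $\psi=\varphi\perp\langle-1,d\rangle$, an $8$-dimensional quadratic form with trivial signed discriminant, hence lying in $\I^2F$ (since $\I^3F=0$). As $\dim\psi=8>u(F)$, the form $\psi$ is isotropic. Its anisotropic part $\psi_{\mathsf{an}}$ still lies in $\I^2F$ and has even dimension. It cannot be $2$-dimensional (forms of dimension $2$ in $\I^2F$ are hyperbolic) nor $0$-dimensional (else $[\varphi]=[\langle\langle d\rangle\rangle]$ in $W(F)$, inconsistent with the anisotropic parts having dimensions $6$ and $2$). If $\dim\psi_{\mathsf{an}}=6$, then $\psi_{\mathsf{an}}$ is the desired anisotropic Albert form.

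The remaining case is $\dim\psi_{\mathsf{an}}=4$. Then the Witt index of $\psi$ is $2$, which by Witt's theorem forces $\varphi$ to contain $\langle\langle d\rangle\rangle$ as a subform, yielding $\varphi\simeq\langle\langle d\rangle\rangle\perp\varphi_2$ with $\varphi_2$ a $4$-dimensional anisotropic form of trivial signed discriminant. Hence $\varphi_2$ is similar to the norm form $n_Q$ of a quaternion division algebra $Q$; and since $\I^3F=0$ kills all $3$-fold Pfister forms, scalar multiples of $n_Q$ are Witt equivalent and thus isometric to $n_Q$, so $\varphi\simeq\langle\langle d\rangle\rangle\perp n_Q$. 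From this rigid decomposition, I would construct the biquaternion $Q\otimes_F(d,c)_F$ for a suitable $c\in\mg{F}$ and verify that its Albert form is anisotropic, exploiting the anisotropy of $\varphi$ and its universality (granted by $u(F)=6$).

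The hard step will be this final case: extracting a biquaternion division algebra from the structural decomposition $\varphi\simeq\langle\langle d\rangle\rangle\perp n_Q$ requires a careful choice of the parameter $c\in\mg{F}$ that ensures anisotropy of the Albert form $\langle-a,-b,ab,d,c,-dc\rangle$ of $Q\otimes_F(d,c)_F$ (where $Q\simeq(a,b)_F$), which amounts to ruling out specific algebraic relations dictated by the structure of $Q$ and of the quadratic extension $F(\sqrt{d})/F$.
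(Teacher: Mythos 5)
Your reduction is carried out correctly up to the last case: the observation that $\I^3F=0$, the passage to $\psi=\varphi\perp\la -1,d\ra\in\I^2F$, the exclusion of $\dim\psi_{\mathsf{an}}\in\{0,2\}$, and the conclusion in the case $\dim\psi_{\mathsf{an}}=4$ that $\varphi\simeq\la 1,-d\ra\perp n_Q$ for a quaternion division algebra $Q$ are all sound. But the proof stops exactly where the real content lies. The paper does not reprove this existence statement at all: it quotes \cite[Proposition 1.4]{EL73}, which asserts precisely that $u(F)=6$ yields an anisotropic $6$-dimensional form of trivial discriminant, and then passes to the Clifford algebra. What you are attempting is a proof of that Elman--Lam result, and your ``remaining case'' is not a residual technicality but is equivalent to the whole theorem: assuming no anisotropic $6$-dimensional form of trivial discriminant exists, \emph{every} anisotropic $6$-dimensional form ends up of the shape $e\la 1,-d\ra\perp n_Q$, so nothing has been gained by the reduction.

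Moreover, the repair you sketch cannot work as stated. You propose to find $c\in\mg{F}$ with $Q\otimes_F(d,c)_F$ a division algebra. If $F(\sqrt{d})$ splits $Q$, i.e.\ $Q\simeq(d,c_0)_F$ for some $c_0$ --- a situation entirely compatible with the anisotropy of $\varphi\simeq\la 1,-d\ra\perp n_Q=\la 1,-d\ra\otimes\la 1,1,-c_0\ra$ and with $u(F)=6$ --- then $Q\otimes_F(d,c)_F\sim(d,c_0c)_F$ has index at most $2$ for every $c$, so its Albert form is always isotropic and no choice of $c$ succeeds. A biquaternion division algebra must then be built from a pair of quaternion algebras not both containing $F(\sqrt{d})$, and proving that such an unlinked pair exists when $u(F)=6$ is essentially the Elman--Lam linkage theorem (a linked field has $u\in\{0,1,2,4,8\}$). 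So there is a genuine gap: the decisive step is both missing and, along the route you indicate, not fillable.
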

\begin{proof}
    By \cite[Proposition 1.4]{EL73}, there exists some $6$-dimensional anisotropic quadratic form $\varphi$ over $F$ of trivial discriminant.
    By \cite[Section 16.A]{BOI}, we obtain that $C(\varphi)\simeq\mathbb{M}_2(B)$ for an $F$-biquaternion division algebra $B$. 
\end{proof}

\begin{lem}\label{descent-qf-quadext-finite&even-u-inv} 
    Let $K/F$ be a quadratic field extension and $\varphi$ a non-degenerate quadratic form over $K$. 
    Then $\varphi\simeq \vartheta_K\perp\psi$ for a non-degenerate quadratic form $\vartheta$ over $F$ and a non-degenerate quadratic form $\psi$ over $K$ with $\dim \psi\leq \frac{1}2u(F)$.
\end{lem}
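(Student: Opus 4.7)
The plan is to pick, among all pairs $(\vartheta,\psi)$ consisting of a non-degenerate quadratic form $\vartheta$ over $F$ and a non-degenerate quadratic form $\psi$ over $K$ satisfying $\varphi\simeq\vartheta_K\perp\psi$, one with $\dim\vartheta$ maximal. Such pairs exist (take $\vartheta=0$, $\psi=\varphi$), and the maximum is attained since $\dim\vartheta\leq\dim\varphi$. I claim that this choice already forces $\dim\psi\leq\frac{1}{2}u(F)$. Assume toward contradiction that $\dim\psi>\frac{1}{2}u(F)$; it suffices to exhibit some $a\in F^\times$ represented by $\psi$, because then $\psi\simeq\la a\ra\perp\psi'$ and the pair $(\vartheta\perp\la a\ra,\psi')$ contradicts the maximality.

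If $\psi$ is isotropic over $K$, then $\psi\simeq\hh\perp\psi''$, and since $\hh\simeq\la 1,-1\ra_K$, the pair $(\vartheta\perp\la 1,-1\ra,\psi'')$ already violates maximality.

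If $\psi$ is anisotropic, the main tool will be the Scharlau transfer. Writing $K=F(\sqrt{d})$ for some $d\in F^\times$, I would define the $F$-linear form $s:K\to F$ by $s(a+b\sqrt{d})=b$, so that $\ker s=F$ and $s\neq 0$. The transfer $s_\ast\psi$, obtained by viewing the underlying $K$-space of $\psi$ as an $F$-space of twice the dimension and composing the values of $\psi$ with $s$, is a non-degenerate quadratic form over $F$ of dimension $2\dim_K\psi>u(F)$, hence isotropic. Any nonzero isotropic vector $v$ for $s_\ast\psi$ satisfies $\psi(v,v)\in\ker s=F$, and the assumed anisotropy of $\psi$ forces $\psi(v,v)\in F^\times$; so $\psi$ represents some $a\in F^\times$ and we conclude as above.

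The only genuinely delicate step is the anisotropic case, and the Scharlau transfer with kernel exactly $F$ is perfectly adapted to it: it converts the dimension bound $\dim_K\psi>\frac{1}{2}u(F)$ into an isotropy statement for a form of dimension $2\dim_K\psi$ over $F$, while simultaneously pinning the resulting representable value to the subspace $F\subseteq K$.
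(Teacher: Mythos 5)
Your argument is correct and is essentially the paper's own proof: the paper likewise passes to the polar form and applies the Scharlau transfer along an $F$-linear functional $s\colon K\to F$ with $\Ker s=F$, citing \cite[Proposition 34.1]{EKM08} for exactly the step you carry out by hand in the anisotropic case (a nonzero isotropic vector of $s_\ast\psi$ yields a value of $\psi$ in $\Ker s\setminus\{0\}=F^\times$, which is then split off and the process iterated). The only cosmetic slip is writing $\psi(v,v)$ for the quadratic value $\psi(v)$.
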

\begin{proof}
    Since $\car F\neq 2$ we can reason with the polar form of $\varphi$.
    The statement now follows by choosing an $F$-linear map $s:K\to F$ with $\Ker s=F$ and then applying \cite[Proposition 34.1]{EKM08}.
\end{proof}

\begin{prop}\label{L:u-inv6-div-biquat}
    Assume that $u(F)\leq4$. Let $K/F$ be a quadratic field extension. Then either $u(K)\leq 4$ or $u(K)=6$. 
    Furthermore, any central simple $K$-algebra of exponent at most $2$ is Brauer equivalent to $(a,b)_K\otimes_K(c,x)_K$ for some $a,b,c\in F^{\times}$ and $x\in K^{\times}$.
\end{prop}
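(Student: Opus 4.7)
The plan is to prove the two assertions separately. For the first, $u(F) \leq 4$ forces every $3$-fold Pfister form over $F$ (of dimension $8$) to be isotropic, hence hyperbolic, so $\I^3 F = 0$. By \cite[Theorem~34.22]{EKM08} also $\I^3 K = 0$. Combining this with the Elman--Lam bound $u(K) \leq \frac{3}{2} u(F) \leq 6$ from \cite[Theorem 4.3]{EL73} and the evenness of finite $u$-invariants under $\I^3 = 0$ from \cite[Proposition 36.3]{EKM08} yields $u(K) \in \{0, 2, 4, 6\}$.

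For the second assertion, let $B$ be a central simple $K$-algebra with $\exp B \leq 2$, so $[B] \in \Br_2(K)$. Applying \Cref{P:I30-Merk} to $K$, fix an anisotropic even-dimensional quadratic form $\varphi$ over $K$ of trivial discriminant with $[B] = [C(\varphi)]$; the bound $u(K) \leq 6$ forces $\dim \varphi \in \{0, 4, 6\}$. The case $\dim \varphi = 0$ is trivial, so assume $\dim \varphi \in \{4, 6\}$ and invoke \Cref{descent-qf-quadext-finite&even-u-inv} to write $\varphi \simeq \vartheta_K \perp \psi$ with $\vartheta$ over $F$ and $\dim \psi \leq \frac{1}{2} u(F) \leq 2$.

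When $\dim \psi \leq 1$, the trivial discriminant of $\varphi$ forces $\psi$ (up to isometry) to be defined over $F$, so absorbing it into $\vartheta$ reduces to $\varphi = \vartheta'_K$ with $\vartheta'$ over $F$ of even dimension and $d(\vartheta') \in \{1, d\}\sq{F}$, where $K = F(\sqrt{d})$. Then $e_2(\varphi)$ equals the restriction of the Hasse--Witt invariant $c(\vartheta')$, and by \Cref{P:u4-Br2} (using $u(F) \leq 4$) the class $c(\vartheta') \in \Br_2(F)$ is that of a single $F$-quaternion $(a, b)_F$, giving $[B] = [(a, b)_K \otimes_K (1, 1)_K]$.

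The substantive case is $\dim \psi = 2$. Writing $\psi = \langle s, t \rangle_K$, the discriminant condition gives $st \equiv m$ in $K^\times/K^{\times 2}$ for some $m \in F^\times$, whence $\psi \simeq s\cdot\langle\langle -m\rangle\rangle_K$. The key manipulation is the Witt-ring identity $\psi - \langle\langle -m\rangle\rangle_K = s\cdot\langle\langle -m, s\rangle\rangle_K$, which rests on the isometry $\langle s, sm, -1, -m\rangle \simeq s\langle\langle -m, s\rangle\rangle$, yielding
\[
\varphi = (\vartheta \perp \langle 1, m\rangle)_K + s\cdot\langle\langle -m, s\rangle\rangle_K
\]
in the Witt ring of $K$ as a sum of two forms in $I^2 K$. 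The first summand has discriminant $d(\vartheta)\cdot m \in F^\times \cap K^{\times 2}$, hence descends from an even-dimensional $F$-form whose Hasse--Witt invariant equals $[(a, b)_F]$ for some $a, b \in F^\times$ by \Cref{P:u4-Br2}; the second is a scalar multiple of a $2$-fold Pfister form with $e_2 = (-m, s)_K$. Additivity of $e_2$ on $I^2 K$ then gives $[B] = [(a, b)_K \otimes_K (-m, s)_K]$, as required. The main obstacle is precisely this last case: identifying the right rewriting of $\psi$ that isolates a single ``half-descended'' symbol $(-m, s)_K$, with the remainder collapsing into one $F$-quaternion via the hypothesis $u(F) \leq 4$.
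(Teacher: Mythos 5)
Your argument is correct and follows the same overall strategy as the paper's proof: apply \Cref{P:I30-Merk} over $K$ to realise the Brauer class as $[C(\varphi)]$ for an even-dimensional form $\varphi$ of trivial discriminant and dimension at most $6$, descend a subform of codimension at most $2$ to $F$ via \Cref{descent-qf-quadext-finite&even-u-inv}, and extract one quaternion symbol defined over $F$ plus one symbol with only its second slot in $K$. The difference is in the extraction step. The paper normalises to $\dim\varphi=6$ exactly (padding with hyperbolic planes, which leaves the Brauer class of the Clifford algebra unchanged), so no case distinction on $\dim\varphi$ or $\dim\psi$ is needed; it then writes $\varphi\simeq\la u,v,w,t,y,-tuvwy\ra$ with $u,v,w,t\in\mg{F}$, $y\in\mg{K}$ and reads off $C(\varphi)\sim(-uv,-vw)_K\otimes_K(uvwt,-ty)_K$ from Kahn's explicit formula. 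You instead keep the anisotropic representative, split into cases, and in the main case decompose $\varphi$ in the Witt ring as $(\vartheta\perp\la 1,m\ra)_K+s\cdot\llangle -m,s\rrangle_K$, using additivity of $e_2$ on $\I^2K$ together with \Cref{P:u4-Br2} applied to the descended summand. Both finishes are valid: the explicit Clifford formula makes the paper's proof shorter and case-free, while your Witt-ring route isolates more conceptually why exactly one slot of one symbol fails to descend.
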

\begin{proof}
    Since $u(F)\leq 4$, it follows by \cite[Theorem 4.3]{EL73} that $u(K)\leq 6$. 
    Since $u(K)\neq5$ by \cite[Corollary 36.4]{EKM08}, we conclude that $u(K)\leq 4$ or $u(K)=6$.

    Let $B$ be a central simple $K$-algebra with $\exp B\leq 2$. 
    It follows by \Cref{P:I30-Merk} that $B\sim C(\varphi)$ for some even-dimensional non-degenerate quadratic form $\varphi$ over $K$ of trivial discriminant, and since $u(K)\leq 6$, we may choose $\varphi$ such that $\dim \varphi=6$.
    Since $u(F)\leq 4$, by \Cref{descent-qf-quadext-finite&even-u-inv} we may write $\varphi\simeq \vartheta_K\perp\psi$ with a non-degenerate $4$-dimensional quadratic form $\vartheta$ over $F$ and a non-degenerate $2$-dimensional quadratic form $\psi$ over $K$.
    We choose elements $u,v,w,t\in \mg{F}$ such that $\vartheta\simeq \la u,v,w,t\ra$.
    Let $y\in\mg{K}$ be an element represented by $\psi$.
    As $\vf$ has trivial discriminant, we obtain that $\vf\simeq\la u,v,w,t,y,-tuvwy\ra$.
    Using \cite[Proposition 6.4.7 and Corollary 6.2.7]{Kahn08}, one  finds that $C(\varphi)\simeq(-uv,-vw)_K\otimes_K(uvwt,-ty)_K\otimes_K\mathbb{M}_2(K)$.
    Now we set $a=-uv$, $b=-vw$, $c=uvwt$ and $x=-ty$, whereby $a,b,c\in\mg{F}$, $x\in\mg{K}$ and $B\sim C(\varphi)\sim  (a,b)_K\otimes_K(c,x)_K$.
\end{proof}

\begin{prop}\label{anisot-deg8-ind2-u4}
    Let $d\in F^{\times}\setminus F^{\times2}$ and $K=F(\sqrt{d})$. Assume that $u(F)\leq4$ and $u(K)=6$.
    Then there exists an anisotropic $F$-algebra with orthogonal involution $(A,\sigma)$ with $\deg A=8$, $\ind A=2$ and $e_1(\sigma)=d F^{\times2}$.
\end{prop}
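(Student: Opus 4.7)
The plan is to exploit the hypothesis $u(K)=6$ to produce a $K$-biquaternion division algebra, identify its $F$-quaternion factor as the Brauer class of $A$, and then realise an orthogonal involution on $A$ of discriminant~$d$ whose Clifford algebra has a component of index~$4$.

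First, I observe that $u(F)=4$: by \cite[Theorem~4.3]{EL73} one has $u(K)\leq\frac{3}{2}u(F)$, which rules out $u(F)\leq 2$; and $u(F)=3$ is excluded by \cite[Proposition~36.3]{EKM08} since $\I^3F=0$. Because $\I^3K=0$ as well by \cite[Corollary~35.8]{EKM08}, the argument of \Cref{P:u6-biquatexist} applied over $K$ yields a $K$-biquaternion division algebra $B$, and by \Cref{L:u-inv6-div-biquat} we may write $B\sim(a,b)_K\otimes_K(c,x)_K$ with $a,b,c\in F^{\times}$ and $x\in K^{\times}$. Since $\ind B=4$, the factor $(a,b)_K$ cannot be split, so $Q:=(a,b)_F$ is an $F$-quaternion division algebra with $Q_K$ also a division algebra; in particular $K\not\subset Q$. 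Set $A=\mathbb{M}_4(Q)$, so that $\deg A=8$, $\ind A=2$ and $A\sim Q$. I then construct a non-degenerate rank-$4$ skew-hermitian form $h$ over $(Q,\can_Q)$ with $e_1(\ad_h)=dF^{\times2}$ whose Clifford algebra $C(\Ad_Q(h))$, a central simple $K$-algebra of degree~$8$ by \Cref{structure-clifford-awi}~$(1)$, has Brauer class $[B]$ in $\Br(K)/\langle[Q_K]\rangle$. A form with the prescribed discriminant is obtained from \Cref{exists-orthogonal-given-disc-I^3=0} (after arranging that $d\in\Nrd_Q(\mg Q)$), and the Clifford class is then tuned by orthogonal sums with rank-$2$ forms that become hyperbolic over $K$; the classification of \cite{LT99}, together with \Cref{P:I30-Merk}, guarantees that every invariant compatible with \Cref{structure-clifford-awi} is realised by some orthogonal involution over $F$.

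To verify that $\sigma:=\ad_h$ is anisotropic, it suffices to show that $\sigma_K$ is. Since $Q_K$ is a division algebra, $(A,\sigma)_K\simeq\Ad_{Q_K}(h_K)$ is a degree-$8$ $K$-algebra with orthogonal involution of trivial discriminant. By \Cref{structure-clifford-awi}~$(2)$, its Clifford algebra decomposes as $C^+\times C^-$ with $[C^+]=[B]$ and $[C^-]=[B]+[Q_K]\sim(c,x)_K$; in particular $\ind C^+=4$. If $\sigma_K$ were isotropic, then by \Cref{hermit-isot->hyperplane-subform} we could write $h_K\simeq\hh\perp h''$ with $h''$ of rank~$2$ and trivial discriminant; combining \Cref{app-disc-deg2-4-6}~$(a)$ applied to $\Ad_{Q_K}(h'')$ with the Clifford formula of \Cref{ort-sum-rel-e_1-e_2}~$(c)$ would then force both components of $C(A_K,\sigma_K)$ to be quaternion classes of index at most~$2$, contradicting $\ind C^+=4$. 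Hence $\sigma_K$, and a fortiori $\sigma$, is anisotropic.

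The main obstacle will be the construction of $h$ in the second paragraph, namely realising the prescribed class $[B]\in\Br(K)/\langle[Q_K]\rangle$ as the Clifford invariant of an orthogonal involution on $A$ with discriminant~$d$. This is a descent problem on the Clifford algebra and must be handled by combining \Cref{exists-orthogonal-given-disc-I^3=0}, the Galois structure of $C(A,\sigma)$ from \Cref{structure-clifford-awi}~$(1)$, the quaternion-generation of $\Br_2(F)$ from \Cref{P:I30-Merk}, and the $\I^3F=0$ classification of orthogonal involutions from \cite{LT99}.
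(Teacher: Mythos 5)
Your overall strategy --- produce a $K$-biquaternion division algebra $B$ from $u(K)=6$, then build a degree-$8$ involution of discriminant $d$ whose Clifford algebra ``contains'' $[B]$, and read off anisotropy from the index of a Clifford component --- is close in spirit to the paper's, and your anisotropy argument via $\ind C^+=4$ would be a legitimate alternative to the paper's splitting-field argument (\Cref{L2}). But there are two genuine problems, the first of which makes your target configuration unattainable. By \Cref{structure-clifford-awi}~$(1)$ with $n=4$, any $(A,\sigma)$ of degree $8$ and discriminant $d\sq{F}$ satisfies $[\corr_{K/F}C(A,\sigma)]=5[A]=[A]$ in $\Br(F)$. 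If $C(A,\sigma)$ is Brauer-equivalent to $B$ or to $B\otimes_K Q_K$, the projection formula gives $\corr_{K/F}[(a,b)_K]=2[(a,b)_F]=0$ and $\corr_{K/F}[(c,x)_K]=[(c,\mathsf{N}_{K/F}(x))_F]$, so the relation forces $[A]=[(c,\mathsf{N}_{K/F}(x))_F]$. This is precisely the algebra $Q'$ of the paper's proof, and it is in general \emph{not} Brauer-equivalent to your choice $Q=(a,b)_F$. So the object you set out to construct ($A\sim(a,b)_F$ with Clifford class $[B]$ modulo $\langle[Q_K]\rangle$) does not exist in general; the Brauer class of $A$ is dictated by the corestriction of the Clifford class, not freely choosable. (Switching to $A\sim Q'$ would also break your third paragraph, since $Q'_K=(c,x)_K\otimes_K(c,{}^{\iota}x)_K$ need not be a division algebra.)

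Second, even with the correct Brauer class, the step you yourself flag as ``the main obstacle'' is not a gap you can close with the tools you cite. The classification theorem of \cite{LT99} is a uniqueness statement (involutions with equal invariants are conjugate); it says nothing about which pairs (discriminant, Clifford algebra) are realised on a given algebra. Likewise, \Cref{exists-orthogonal-given-disc-I^3=0} only controls the discriminant, and ``tuning by orthogonal sums with rank-$2$ forms hyperbolic over $K$'' changes the degree from $8$ to $12,16,\dots$, so it cannot produce the degree-$8$ form you need. The paper resolves exactly this realisation problem constructively: it applies the discriminant-algebra/corestriction construction of \cite[Section 15.B]{BOI} to $(c,x)_K$ to obtain a degree-$4$ piece $(C,\sigma_1)$ with discriminant $d$ and underlying algebra $\mathbb{M}_2(Q')$, proves $Q'$ is a division algebra via the norm principle together with $u(F)\leq4$, and glues on a second degree-$4$ piece of trivial discriminant over the same $Q'$. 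You would need to supply an argument of comparable substance for the existence step before your anisotropy argument has anything to apply to.
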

\begin{proof}
    By \Cref{P:u6-biquatexist}, there exists a $K$-biquaternion division algebra $B$.
    By \Cref{L:u-inv6-div-biquat}, there exist $a,b,c\in\mg{F}$ and $x\in\mg{K}$ such that 
    $$B\simeq (a,b)_K\otimes_K (c,x)_K\,.$$
    We set $H=(a,b)_F$ and $Q=(c,x)_K$.
    Let $\iota$ be the non-trivial $F$-automorphism of $K$ and ${}^{\iota}Q$ the $K$-quaternion algebra obtained by letting $K$ act on $Q$ via~$\iota$.
    Now \cite[Section 15.B]{BOI} provides an $F$-biquaternion algebra with orthogonal involution $(C,\sigma_1)$ such that $(C,\sigma_1)_K\simeq (Q\otimes_K\!{^{\iota}}Q,\can_{Q}\otimes \can_{\,^{\iota}Q})$, $e_1(\sigma_1)=d F^{\times2}$ and $C\simeq \mathbb{M}_{2}(Q')$ for $Q'=(c,\mathsf{N}_{K/F}(x))_F$, where $\mathsf{N}_{K/F}(x)$ is the norm of $x$ with respect to $K/F$.
 
    We claim that $Q'$ is a division algebra.
    Suppose for the sake of a contradiction that $Q'$ is split.
    Then $\mathsf{N}_{K/F}(x)$ is represented by the quadratic form $\la 1,-c\ra$ over $F$.
    It follows by \cite[Norm Principle 2.13]{EL76} that $x=ey$ for some $y\in\mg{K}$ which is represented by $\la 1,-c\ra$ over $K$ and some $e\in\mg{F}$. 
    Then $Q\simeq (c,e)_K$ and $B\simeq H_K\otimes_K Q=(a,b)_K\otimes_K (c,e)_K$.
    Since $B$ is a division algebra, we conclude that $(a,b)_F\otimes_F (c,e)_F$ is a division algebra.
    By \Cref{P:u4-Br2}, this contradicts the hypothesis that $u(F)\leq 4$. Hence $Q'$ is a division algebra.
	
    Since $u(F)\leq4$, it follows by \Cref{P:u4-Br2} that $H\otimes_FQ'\sim H'$ for an $F$-quaternion algebra $H'$.
    Then $H\otimes_FH'\sim Q'\sim C$. As $\deg H\otimes_FH'=4=\deg C$, we obtain that $H\otimes_FH'\simeq C$.
    We fix an orthogonal involution $\sigma_0$ on $C$ such that $(H\otimes_F H',\can_H\otimes\can_{H'})\simeq (C,\sigma_0)$.
    Then $e_1(\sigma_0)$ is trivial, by \Cref{app-disc-deg2-4-6}, and $e_2(\sigma_0)=[H]$ in $\Br(F)/\langle[Q']\rangle$, by \Cref{clifford-ort-inv-biquat}.
    For $i=0,1$, we have that $(C,\sigma_i)\simeq\Ad_{Q'}(h_i)$ for some non-degenerate skew-hermitian form $h_i$ over $(Q',\can_{Q'})$ with $\rk h_i=2$.
    
    We consider now the $F$-algebra with orthogonal involution $$(A,\sigma)\,\,=\,\,\Ad_{Q'}(h_0\perp h_1)\,.$$
    Note that $\deg A=8$ and $A\sim Q'$. In particular $\ind A=2$.
    Since $e_1(\sigma_0)$ is trivial and $(A,\sigma)\in (C,\sigma_0)\boxplus(C,\sigma_1)$, we obtain by \Cref{ort-sum-rel-e_1-e_2} $(b)$ that $e_1(\sigma)=e_1(\sigma_1)=d F^{\times2}$.

    We claim that $(A,\sigma)$ is anisotropic. Establishing this will finish the proof.
    Suppose on the contrary that $(A,\sigma)$ is isotropic. 
    Then, by \Cref{L2}, there exists a quadratic field extension $L/F$ such that $(C,\sigma_i)_L$ is split and isotropic for $i=0,1$. 
    Since $(H_L\otimes^{\vphantom{M}}_L H'_L,\can_{H_L}\otimes\can_{H'_L})\simeq(C,\sigma_0)_L$, which is split and isotropic,
    it follows by \Cref{iso=hyper-for-deg4+orth+trivialdisc} that  $H_L$ and $H'_L$ are split. 
    Since furthermore $(Q_{KL},\can_{Q_{KL}})\otimes_L(^{\iota}Q_{KL},\can_{^{\iota}Q_{KL}})\simeq (C,\sigma_1)_{KL}$, which is isotropic, \Cref{iso=hyper-for-deg4+orth+trivialdisc} yields that $Q_{KL}$ is split. 
    In particular $B_{KL}$ is split. Hence $\ind B\leq[KL:K]\leq2$.
    This contradicts that $B$ is a division algebra. 
\end{proof}

\begin{exmp}
    By \cite{LM94}, there exists a field $F$ with $\car F\neq2$ and $u(F)=4$ having a quadratic field extension $K/F$ with $u(K)=6$. 
    (See \cite[Example 4.5]{BB23} for another construction of such an example.)
    It follows by \Cref{anisot-deg8-ind2-u4} that there exists an $F$-algebra with orthogonal involution $(A,\sigma)$ with $\ind A=2$, $\deg A=8$ and such that $\sigma$ is anisotropic.
\end{exmp}

In the following, we give a necessary and sufficient condition for a  field with $u$-invariant $4$ to admit a quadratic field extension with $u$-invariant $6$.

\begin{thm}\label{T:exists-quadext-u6}
    Assume that $u(F)\leq4$. Let $d\in \mg F\setminus\sq F$ and $K=F(\sqrt{d})$.
    If $u(K)=6$, then there exists an anisotropic $F$-algebra with orthogonal involution $(A,\sigma)$ with $\deg A=8$, $\ind A=2$ and  $e_1(\sigma)=d F^{\times2}$.
    Otherwise $u(K)\leq 4$, and every $F$-algebra with orthogonal involution of degree $8$ of discriminant $d\sq{F}$ is isotropic.
\end{thm}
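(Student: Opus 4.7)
The first assertion is precisely \Cref{anisot-deg8-ind2-u4}, which I would invoke directly with the given $d$; no further work is required for this half of the statement.

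For the second assertion, assume $u(K)\neq 6$. First, since $u(F)\leq 4$, \Cref{L:u-inv6-div-biquat} forces $u(K)\leq 4$, giving the first half of the conclusion. Now let $(A,\sigma)$ be an arbitrary $F$-algebra with orthogonal involution such that $\deg A=8$ and $e_1(\sigma)=dF^{\times 2}$. Since $\sigma$ is of the first kind, $\exp A\leq 2$; combining this with $u(F)\leq 4$ and \Cref{P:u4-Br2}, I conclude that $A$ is Brauer equivalent to an $F$-quaternion algebra, so $\ind A\in\{1,2\}$. If $A$ is split, then $\deg A=8>4\geq u(F)$, so \Cref{P:isotropy-first kind-split-odd deg} gives that $\sigma$ is isotropic.

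In the remaining case $\ind A=2$, I would apply \Cref{iso-deg4m-nontrivial-e1} directly. Its hypotheses are verified as follows: $\I^3 F=0$ because every $3$-fold Pfister form over $F$ has dimension $8>u(F)$, hence is isotropic and therefore hyperbolic; $\ind A\leq 2$ holds by construction; $\deg A=4\lceil u(F)/4\rceil+4=8$, which is valid since $u(F)\in\{1,2,3,4\}$ (using $u(F)\geq 1$, as $\la 1\ra$ is always anisotropic) forces $\lceil u(F)/4\rceil=1$; finally the extension $K=F(\sqrt{d})$ realises the discriminant $e_1(\sigma)$ and satisfies $u(K)\leq 4=4\lceil u(F)/4\rceil$ by the first step. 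The proposition then yields that $(A,\sigma)$ is isotropic, completing the argument.

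This reduction is essentially bookkeeping with previously established results, and I foresee no serious obstacle. The only delicate point is checking the exact numerical hypothesis $\deg A=4\lceil u(F)/4\rceil+4$ of \Cref{iso-deg4m-nontrivial-e1}; this reduces to the trivial observation that $\lceil u(F)/4\rceil=1$ whenever $1\leq u(F)\leq 4$.
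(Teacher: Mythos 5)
Your proposal is correct and follows essentially the same route as the paper: Proposition~\ref{anisot-deg8-ind2-u4} for the case $u(K)=6$, and Proposition~\ref{L:u-inv6-div-biquat} together with Proposition~\ref{iso-deg4m-nontrivial-e1} for the case $u(K)\leq 4$. You merely spell out a few hypothesis checks (the bound $\ind A\leq 2$ via Proposition~\ref{P:u4-Br2} and the identity $4\lceil u(F)/4\rceil+4=8$) that the paper leaves implicit, which is fine.
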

\begin{proof}
    By \Cref{L:u-inv6-div-biquat}, we have $u(K)\leq 4$ or $u(K)=6$. 
    If $u(K)\leq 4$, then it follows by \Cref{iso-deg4m-nontrivial-e1} that every $F$-algebra with orthogonal involution of degree $8$ and discriminant $d\sq{F}$ is isotropic.
    If $u(K)=6$, then by \Cref{anisot-deg8-ind2-u4} there exists an anisotropic $F$-algebra with orthogonal involution of degree $8$, index $2$ and  discriminant $d\sq{F}$.
\end{proof}

\section{Anisotropic orthogonal involutions in degree $10$} 
\label{section:anisotropic-inv-deg10-I^3=0}   

In this section, we show that the bounds obtained in \Cref{bound-anis-alg-ind2-orth-I^3=0} and \Cref{orthogonal-hermit-u-quater-I^3=0} are optimal when $u(F)=4$.

\begin{thm}\label{deg10-ort-iso-equiv-cond}
    Assume that $u(F)\leq 4$. The following are equivalent:
\begin{enumerate}[$(1)$]
    \item Every $F$-algebra with orthogonal involution of degree $10$ with trivial discriminant is isotropic.
    \item Every central simple $F$-algebra of exponent $4$ has index $4$.
    \item For every central $F$-division algebra $B$ of degree $4$ and every $F$-quaternion algebra $H$, there exists a quadratic field extension $K/F$ such that $H_K$ is split and $\ind B_K=2$.
\end{enumerate}
\end{thm}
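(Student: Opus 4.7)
The plan is to establish the cycle $(1)\Rightarrow(2)\Rightarrow(3)\Rightarrow(1)$, exploiting the structure of degree~$10$ orthogonal involutions via their Clifford algebras. As a setup: for any $(A,\sigma)$ of degree $10$ with trivial discriminant, if $A$ is split then $\sigma$ is isotropic by \Cref{P:isotropy-first kind-split-odd deg}; otherwise $\ind A=2$ and $A\sim Q$ for an $F$-quaternion division algebra $Q$, so $(A,\sigma)\simeq\Ad_Q(h)$ for a rank-$5$ skew-hermitian form $h$ over $(Q,\can_Q)$. By \Cref{structure-clifford-awi}(2), $C(A,\sigma)\simeq C^+\times C^-$ with $\deg C^\pm=16$ and $2[C^+]=5[A]=[Q]\neq 0$, forcing $\exp C^+=4$.

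For $(1)\Rightarrow(2)$, I argue by contrapositive. Assuming a central simple $F$-algebra $D$ with $\exp D=4$ and $\ind D\geq 8$, we have $2[D]=[Q]$ for an $F$-quaternion division algebra $Q$ by \Cref{P:u4-Br2}. Using \Cref{exists-orthogonal-given-disc-I^3=0} to build skew-hermitian forms of prescribed discriminant, and combining them in orthogonal sums whose invariants are controlled by \Cref{ort-sum-rel-e_1-e_2}, I construct a rank-$5$ form $h$ over $(Q,\can_Q)$ with $e_1(\ad_h)=1$ and $e_2(\ad_h)=[D]$ in $\Br(F)/\langle[Q]\rangle$. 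The anisotropy of $h$ is forced: any isotropy would, by \Cref{hermit-isot->hyperplane-subform}, split off a hyperbolic plane, reducing $h$ to a form whose Clifford invariant still equals $[D]\pmod{[Q]}$ but is realised by a Brauer class of strictly smaller index, contradicting $\ind D\geq 8$.

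For $(2)\Rightarrow(3)$, observe first that every degree $4$ $F$-division algebra $B$ has $\exp B=4$ (else $\ind B\leq 2$ by \Cref{P:u4-Br2}, contradicting $\deg B=4$). Hence for any $F$-quaternion $H$, the algebra $B\otimes_F H$ has exponent $4$, and by $(2)$ it has index $4$, so $B\otimes_F H\simeq\mathbb{M}_2(C)$ for a degree $4$ $F$-division algebra $C$. Since $B\otimes_F H$ is thus not a division algebra, a classical linkage argument (for the tensor product of a degree $4$ algebra with a quaternion algebra whose index strictly drops) produces a common quadratic subfield $K$ of $B$ and $H$; this $K$ satisfies $H_K$ split and $\ind B_K=2$ as required.

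For $(3)\Rightarrow(1)$, take $(A,\sigma)$ of degree $10$ with trivial discriminant and reduce to $A\sim Q$ with $\exp C^+=4$. A variant of the linkage argument, applied under $(3)$ to $C^+$ and $Q$, forces $\ind C^+=4$, so $C^+\sim B$ for a degree $4$ division algebra $B$ with $2[B]=[Q]$. Applying $(3)$ to $B$ and $Q$ yields a quadratic extension $K/F$ with $Q_K$ split and $\ind B_K=2$. Over $K$, $\sigma_K=\ad_\varphi$ for a quadratic form $\varphi$ of dimension $10$ with trivial discriminant and Clifford invariant $[B_K]$ of index $2$; since $u(K)\leq 6$ by \Cref{L:u-inv6-div-biquat} and $\I^3K=0$, the classification \Cref{EL:I3=0-classification} yields that $\varphi$ has Witt index $\geq 2$, hence $\sigma_K$ is isotropic. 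To descend: if $\sigma$ were anisotropic, \Cref{I^3=0-quad-excellent-skewhermit/Q-split} decomposes $h\simeq h'\perp h''$ with $h''_K$ hyperbolic of rank $\leq 2$ and $h'_K$ anisotropic, and an analysis of the invariants of both summands via \Cref{ort-sum-rel-e_1-e_2}, \Cref{quad-ext-hyper->trivial-disc} and \Cref{app-disc-deg2-4-6}(a) will contradict the prescribed structure of $\varphi$ on the $K$-side. The main obstacles are the two linkage-type steps extracting a common quadratic subfield from an index drop, and the descent of isotropy from $K$ to $F$, which requires the rigid Clifford-invariant structure imposed by $\I^3F=0$.
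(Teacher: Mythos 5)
Your cycle $(1)\Rightarrow(2)\Rightarrow(3)\Rightarrow(1)$ differs from the paper's $(1)\Rightarrow(3)\Rightarrow(2)\Rightarrow(1)$, and the critical weakness lies in your step $(2)\Rightarrow(3)$. You assert that $\ind(B\otimes_FH)=4<8$ yields, ``by a classical linkage argument,'' a common quadratic subfield of $B$ and $H$. No such classical result exists for a degree-$4$ division algebra tensored with a quaternion algebra: Albert's common-subfield theorem is specific to two quaternion algebras, and index reduction for higher-degree algebras does not in general produce common quadratic subfields. This unproved claim is precisely the substance of condition $(3)$, i.e., it is the heart of the theorem rather than a tool one may quote. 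The paper's only mechanism for extracting such a quadratic extension is \Cref{L2}, which requires the \emph{isotropy of an orthogonal sum of involutions} as input --- which is why the paper derives $(3)$ from $(1)$ (via the degree-$10$ algebra built from $(H,\can_H)\otimes(H',\can_{H'})$ and the discriminant algebra of $(B\times B^{\op},\sw)$), not from $(2)$. As it stands, your $(2)\Rightarrow(3)$ is not established.

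There is a second, more repairable gap in $(1)\Rightarrow(2)$: \Cref{exists-orthogonal-given-disc-I^3=0} and \Cref{ort-sum-rel-e_1-e_2} only let you prescribe discriminants, and a rank-$5$ form with trivial discriminant cannot be assembled from even-rank trivial-discriminant pieces; to force $e_2(\ad_h)=[D]$ you must first write $[D]=[B]+[H]$ with $\deg B=4$ (this already requires the argument of the paper's $(3\Rightarrow2)$ step, using \Cref{L:u-inv6-div-biquat} over a quadratic splitting field of $D^{\otimes2}$, and in particular rules out $\ind D=16$) and then realise $B\times B^{\op}$ as the Clifford algebra of a degree-$6$ involution via the discriminant-algebra construction of \cite[Section 15.D]{BOI} --- none of which appears in your sketch. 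By contrast, your $(3)\Rightarrow(1)$ is essentially sound once one makes the step ``$\ind C^+=4$'' precise (decompose $h$ into rank-$2$ and rank-$3$ pieces by \Cref{C:trivial disc-(n-m)dim-subform of skew-herm} and apply $(3)$ to the degree-$4$ Clifford component of the rank-$3$ piece); the final contradiction is then a clean dimension count, since \Cref{I^3=0-quad-excellent-skewhermit/Q-split} forces the anisotropic part of $\sigma_K$ to have dimension at least $6$, while the classification over $K$ caps it at $4$. Note that the paper's $(2)\Rightarrow(1)$ instead pads $h$ to a rank-$8$ form with trivial $e_1$ and $e_2$ and invokes \Cref{deg4m-ortogonal-isot-hyper-I^3F=0} and \Cref{half-rk-subform-hyper->iso}, avoiding any descent of isotropy.
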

\begin{proof}
    $(1\Rightarrow 3)$ Consider a central $F$-division algebra $B$ with $\deg B=4$ and an $F$-quaternion algebra $H$.  
    Since $B$ is not Brauer equivalent to any $F$-quaternion algebra, it follows by \Cref{P:u4-Br2} that $\exp B>2$. 
    Since $\exp B$ divides $\deg B$, we conclude that $\exp B=4$.  Hence $\exp B^{\otimes 2}=2$.
    Since $u(F)\leq 4$, it follows by \Cref{P:u4-Br2} that there exist $F$-quaternion algebras $Q$ and $H'$ such that $B^{\otimes 2}\sim Q$ and $H\otimes_FQ\sim H'$. 
    As $\exp B=4$, we have that $Q$ is an $F$-division algebra.
    We set $$(A_0,\sigma_0)=(H,\can_H)\otimes (H',\can_{H'})$$
    and observe that $A_0\sim Q$ and $\sigma_0$ is an orthogonal involution of trivial discriminant.
    Let $\sw$ denote the switch involution on $B\times B^{\op}$. This is a unitary involution.
    As explained in \cite[Section 15.D]{BOI}, the construction of the discriminant algebra of $(B\times B^{\op},\sw)$ and of its induced canonical involution 
    provides us with an $F$-algebra with orthogonal involution $(A_1,\sigma_1)$ of degree $6$  and trivial discriminant having the property that
    $$C(A_1,\sigma_1)\simeq B\times B^{\op}\,.$$
    By \Cref{structure-clifford-awi}, we obtain that $A_1\sim B^{\otimes2}\sim Q$. 
    As $A_0\sim A_1\sim Q$, there exist two non-degenerate skew-hermitian forms $h_0$ and $h_1$ over $(Q,\can_Q)$ such that 
    $$\hspace{2.5cm}(A_i,\sigma_i)\simeq\Ad_Q(h_i) \quad \mbox{ for }i=0,1\,.$$
    We obtain that $\rk h_0=\frac{1}2\deg A_0=2$ and $\rk h_1=\frac{1}2\deg A_1=3$. 
    We now consider the $F$-algebra with orthogonal involution $$(A,\sigma)=\Ad_Q(h_0\perp h_1)\,.$$
    We have $\deg A=10$, $A\sim Q$ and $(A,\sigma)\in (A_0,\sigma_0) \boxplus(A_1,\sigma_1)$, and we compute by \Cref{ort-sum-rel-e_1-e_2} $(b)$ that $\sigma$ has trivial discriminant.
    
    Assume now that Condition $(1)$ holds. Then $(A,\sigma)$ is isotropic. 
    It follows by \Cref{L2} that there exists a quadratic field extension $K/F$ such that $(A_0,\sigma_0)_K$ and $(A_1,\sigma_1)_K$ are both split and isotropic.
    We obtain that $e_2((\sigma_1)_K)=[B_K]$ in $\Br(K)$, the $K$-quaternion algebras $H_K, H'_K, Q_K$ are all split and $(A_1,\sigma_1)_K\simeq\Ad(\varphi)$ for some non-degenerate, isotropic $6$-dimensional quadratic form $\varphi$ over $K$ of trivial discriminant. 
    As $[B_K]=e_2((\sigma_1)_K)=e_2(\varphi)$ and $\varphi$ is isotropic, we obtain that $\ind B_K\leq 2$.
    Since $\ind B=4$ and $[K:F]=2$, we conclude that $\ind B_K=2$.
    Hence we have found a quadratic field extension $K/F$ such that $H_K$ is split and $\ind B_K=2$.
\smallskip

    $(3 \Rightarrow 2)$ Consider a central simple $F$-algebra $B$ with $\exp B=4$. 
    Then $\ind B\geq 4$ and $\exp B^{\otimes2}=2$.
    Since $u(F)\leq4$, we obtain by \Cref{P:u4-Br2} that $\ind B^{\otimes2}=2$. 
    Hence there exists a quadratic field extension $K/F$ such that $B^{\otimes2}_K$ is split.
    Since $u(F)\leq 4$, it follows by \Cref{L:u-inv6-div-biquat} that there exist an $F$-quaternion algebra $H$ and a $K$-quaternion algebra $Q$ such that
    $B_K\sim H_K\otimes_K Q$.
    Let $B'$ denote the central $F$-division algebra which is Brauer equivalent to $B\otimes_FH$. 
    Since $\exp H\leq 2$, we obtain that $\exp B'=\exp B=4$.
    Furthermore $B'_K\sim Q$, whereby $\ind B'\leq [K:F]\cdot \ind Q\leq 4=\exp B'$.
    We conclude that $\deg B'=\ind B'=\exp B'=4$.

    Assume now that $(3)$ holds. 
    Then there exists a quadratic field extension $L/F$ such that $\ind B'_L=2$ and $H_L$ is split. 
    Then $B_L\sim B'_L$ and in particular $\ind B_L=\ind B'_L=2$. 
    We obtain that $\ind B\leq [L:F]\cdot \ind B_L\leq 4=\exp B$, whereby $\ind B=\exp B =4$.
\smallskip

    $(2\Rightarrow 1)$\, Let $(A,\sigma)$ be an $F$-algebra with orthogonal involution of degree $10$ and with trivial discriminant.
    As $\exp A\leq2$ and $\deg A=10$, we get that $\ind A\leq2$. 
    If $A$ is split, then as $\deg A>u(F)$, it follows by \Cref{P:isotropy-first kind-split-odd deg} that $(A,\sigma)$ is isotropic.
    Assume now that $\ind A=2$. 
    Let $Q$ denote the $F$-quaternion division algebra which is Brauer equivalent to $A$. 
    We have that $$(A,\sigma)\simeq\Ad_Q(h)$$ for a non-degenerate skew-hermitian form $h$ over $(Q,\can_Q)$ with $\rk h=5$.
    By \Cref{C:trivial disc-(n-m)dim-subform of skew-herm}, we have that $h\simeq h_0\perp h_1$ for two non-degenerate skew-hermitian forms $h_0$ and $h_1$ over $(Q,\can_Q)$ with $\rk h_0=2$, $\rk h_1=3$ and such that $\ad_{h_0}$ has trivial discriminant. 
    For $i=1,2$, we set $$(A_i,\sigma_i)=\Ad_Q(h_i).$$
    Then $\sigma_0$ has trivial discriminant, and since $\deg A_0=\deg Q\cdot \rk h_0=4$, we get by \Cref{app-disc-deg2-4-6} that $$(A_0,\sigma_0)\simeq(H,\can_H)\otimes(H',\can_{H'})$$ 
    for two $F$-quaternion algebras $H$ and $H'$.
    It follows that $H\otimes_FH'\sim A_0\sim Q$. Furthermore, $\deg A_1=6$ and $A_1\sim Q$. 
    Since $\s$ and $\s_0$ have trivial discriminant, so has $\s_1$, in view of \Cref{ort-sum-rel-e_1-e_2} $(b)$.
    Hence, it follows by \Cref{structure-clifford-awi} that $C(A_1,\sigma_1)\simeq B\times B^{\op}$ for some central simple $F$-algebra $B$ with $\deg B=4$ such that $B^{\otimes2}\sim A_1\sim Q$.  
    Since $\exp Q=\ind Q=2$, we obtain that $\exp B=4$. 
    Since $\exp H\leq 2$, we conclude that $\exp B\otimes_FH=\exp B=4$. 
    
    Assume now that $(2)$ holds. 
    Then $B\otimes_FH\sim B'$ for some central $F$-division algebra $B'$ of degree $4$. 
    Let $\sw$ denote the switch involution on $B'\times B'^{\op}$ and let $(A_2,\sigma_2)$ denote the discriminant algebra of $(B'\times B'^{\op},\sw)$ with its canonical involution. 
    By \cite[Section 15.D]{BOI}, $(A_2,\sigma_2)$ is an $F$-algebra with orthogonal involution of degree $6$ with trivial discriminant which has the property that $C(A_2,\sigma_2)\simeq B'\times B'^{\op}$.
    By \Cref{structure-clifford-awi}, we get that $A_2\sim B'^{\otimes2}\sim B^{\otimes 2}\sim Q$. 
    It follows that $(A_2,\sigma_2)\simeq\Ad_Q(h_2)$ for some non-degenerate skew-hermitian form $h_2$ over $(Q,\can_Q)$ with $\rk h_2=3$. 
    
    We consider now the $F$-algebra with orthogonal involution
    $$(A',\sigma')\,\,=\,\,\Ad_Q(h\perp h_2).$$
    Note that $\deg A'=16$ and $(A',\sigma')\in (A_0,\sigma_0)\boxplus(A_1,\sigma_1)\boxplus(A_2,\sigma_2)$.
    By \Cref{ort-sum-rel-e_1-e_2}, we obtain that $e_1(\sigma')$ is trivial and that
    $$e_2(\sigma')=e_2(\sigma_0)+e_2(\sigma_1)+e_2(\sigma_2)=[H]+[B]+[B']=0\quad\,\text{in}\, \Br(F)/\langle[Q]\rangle.$$
    As $u(F)\leq4$, we have that $\I^3F=0$. Hence, we conclude by \Cref{deg4m-ortogonal-isot-hyper-I^3F=0} that $(A',\sigma')$ is hyperbolic. 
    Therefore $h\perp h_2$ is hyperbolic. 
    Since $\rk h=5$ and $\rk(h\perp h_2)=8$, we conclude by \Cref{half-rk-subform-hyper->iso} that $h$ is isotropic. 
    This shows that $(A,\sigma)$ is isotropic.
\end{proof}

The following example shows that the upper bounds obtained in \Cref{bound-anis-alg-ind2-orth-I^3=0} and \Cref{orthogonal-hermit-u-quater-I^3=0} are optimal in the case where $u(F)=4$.

\begin{exmp}\label{optimality-u^+(Q)-for-u(F)=4}
    By \cite[Example 4.4]{BB23}, there exist a field $F$ with $\car F\neq 2$ and $u(F)=4$ and a central $F$-division algebra $D$ with $\exp D=4$ and $\deg D=8$. 
    Given such a field $F$, it follows by \Cref{deg10-ort-iso-equiv-cond} that there exists an anisotropic $F$-algebra  with orthogonal involution $(A,\sigma)$ of degree $10$ with trivial discriminant. 
    In view of \Cref{bound-anis-alg-ind2-orth-I^3=0}, this is the maximal possible degree for a central simple $F$-algebra carrying an anisotropic orthogonal involution when $u(F)=4$.
    
    The presence of an $F$-involution on $A$ implies that $\exp A\leq 2$. Since $u(F)=4$, it follows that $\ind A\leq 2$.
    As $\deg A >u(F)$ and $A$ carries an anisotropic orthogonal involution, $A$ cannot be split. 
    Hence $\ind A=2$, whereby $A$ is Brauer equivalent to an $F$-quaternion division algebra $Q$.
    We have $(A,\sigma)\simeq\Ad_Q(h)$ for a non-degenerate skew-hermitian form $h$ over $(Q,\can_Q)$. 
    Then $\rk h=\frac{1}2 \deg A=5$, and since $\sigma$ is anisotropic, it follows 
    that $h$ is anisotropic.
    In particular, $u^+(Q)\geq \rk h =5$. 
    Hence the upper bound $5$ for $u^+(Q)$ obtained by \Cref{orthogonal-hermit-u-quater-I^3=0} when $u(F)=4$ is attained in this example. 
\end{exmp}

\subsection*{Acknowledgements}
The authors are grateful to the referee for their careful reading, as well as for their valuable suggestions, comments and corrections.
This work was supported by the Fonds Wetenschappelijk Onderzoek – Vlaanderen in the FWO Odysseus Programme (project G0E6114N, \emph{Explicit Methods in Quadratic Form Theory}), by the FWO-Tourne\-sol programme (project VS05018N), by the Fondazione Cariverona in the programme Ricerca Scientifica di Eccellenza 2018 (project \emph{Reducing complexity in algebra, logic, combinatorics -- REDCOM}), by T\"{U}B\.{I}TAK-221N171, by the 2020 PRIN (project \emph{Derived and underived algebraic stacks and applications}) from MIUR, and by research funds from Scuola Normale Superiore.

\subsection*{Conflict of interest statement}
On behalf of all authors, the corresponding author states that there is no conflict of interest.

\subsection*{Data accessibility statement}
The  article describes entirely theoretical research.
All new data supporting the findings presented in this article are included in the manuscript.
Data sharing is not applicable to this article.

\end{document}